\newcommand{\stoptocwriting}{%
	\addtocontents{toc}{\protect\setcounter{tocdepth}{-5}}}
\newcommand{\vertiii}[1]{{\left\vert\kern-0.25ex\left\vert\kern-0.25ex\left\vert #1
		\right\vert\kern-0.25ex\right\vert\kern-0.25ex\right\vert}}
\newcommand{\vect}[1]{\ensuremath{\mathbf{#1}}}
\newcommand{\R}{\vect{R}}
\newcommand{\ccdot}{\boldsymbol{\cdot}}
\newcommand{\norm}[1]{\left\lVert#1\right\rVert}
\global\long\def\lone{1/1}
\global\long\def\loff{\mathrm{off}}
\newcommand{\thetatrue}{\theta^\star_t}
\newcommand{\thetaest}{\widehat{\theta}_t}
\newcommand{\bmest}{\widetilde{F}^*(\widehat{\mu}_t)}
\newcommand{\supp}{\mathrm{supp}}
\newcommand{\cD}{\mathcal{D}}
\newcommand{\cK}{\mathcal{K}}
\newcommand{\cO}{\mathcal{O}}
\newcommand{\cR}{\mathcal{R}}
\newcommand{\bE}{\mathbb{E}}
\newcommand{\bP}{\mathbb{P}}
\newcommand{\bR}{\mathbb{R}}
\global\long\def\linf{\infty/\infty}
\newtheorem{theorem}{Theorem}
\newtheorem{definition}{Definition}
\newtheorem{lemma}{Lemma}
\newtheorem{remark}{Remark}
\newtheorem{proposition}{Proposition}
\newtheorem{assumption}{Assumption}
\begin{document}

	\title{Solution Path of Time-varying Markov Random Fields with Discrete Regularization\thanks{Salar Fattahi is supported, in part, by grant 2152776 from the National Science Foundation, grant N00014-22-1-2127 from the Office of Naval Research, and a MICDE Catalyst Grant. Andr\'es G\'omez is supported, in part, by grant 2152777 from the National Science Foundation.}}
 \author{Salar Fattahi\\
 Industrial and Operations Engineering\\
	University of Michigan\\
 \href{mailto:fattahi@umich.edu}{fattahi@umich.edu}
 \and Andr\'es G\'omez\\
 Industrial and Systems Engineering\\
	University of Southern California\\
 \href{mailto:gomezand@usc.edu}{gomezand@usc.edu}
	}
	
	\maketitle
	\begin{abstract}
		 We study the problem of inferring sparse time-varying Markov random fields (MRFs) with different discrete and temporal regularizations on the parameters. Due to the intractability of discrete regularization, most approaches for solving this problem rely on the so-called \textit{maximum-likelihood estimation} (MLE) with relaxed regularization, which neither results in ideal statistical properties nor scale to the dimensions encountered in realistic settings. In this paper, we address these challenges by departing from the MLE paradigm and resorting to a new class of constrained optimization problems with exact, discrete regularization to promote sparsity in the estimated parameters. Despite the nonconvex and discrete nature of our formulation, we show that it can be solved efficiently and parametrically for \textit{all} sparsity levels. More specifically, we show that the entire solution path of the time-varying MRF for all sparsity levels can be obtained in $\mathcal{O}(pT^3)$, where $T$ is the number of time steps and $p$ is the number of unknown parameters at any given time. The efficient and parametric characterization of the solution path renders our approach highly suitable for cross-validation, where parameter estimation is required for varying regularization values. Despite its simplicity and efficiency, we show that our proposed approach achieves provably small estimation error for different classes of time-varying MRFs, namely Gaussian and discrete MRFs, with as few as one sample per time. Utilizing our algorithm, we can recover the complete solution path for instances of time-varying MRFs featuring over 30 million variables in less than 12 minutes on a standard laptop computer. Our code is available at \url{https://sites.google.com/usc.edu/gomez/data}.
  \footnote{An earlier version of this paper~\citep{fattahi2021scalable}, which concentrated solely on sparsely-changing Gaussian MRFs with a \textit{fixed} parameter, was presented at the Neural Information Processing Systems (NeurIPS) conference in 2021. The current submission is a substantial expansion of our work on multiple fronts. Firstly, we have shown that the complete solution path of time-varying MRFs can be recovered for all regularization parameters. Secondly, our new approach enables us to tackle a broader class of temporal changes beyond sparsity. Lastly, our statistical guarantees apply to a more extensive class of MRFs, specifically discrete MRFs.}
	\end{abstract}

	\stoptocwriting

 	\section{Introduction}
	Most modern systems are massive-scale with a hierarchy of unknown and ever-changing topologies that must be estimated in real-time. For instance, the real-time inference of time-varying
stock correlation networks is crucial for the identification of sharp changes in market conditions, and a need to
rebalance the portfolio~\citep{talih2005structural, polson2000bayesian, hallac2017network}. Another example is the inference of temporal gene regulatory networks based on gene expression data, which has immediate applications in early diagnosis of cancer and other disease processes~\citep{hartemink2000using, karlebach2008modelling, ravikumar2023efficient}. 

The behavior of time-varying networks, such as those mentioned above, can be captured by \textit{time-varying Markov Random Fields} (MRF). Time-varying MRFs are associated with a temporal sequence of undirected \textit{Markov graphs} {$\mathcal{G}_t(V,E_t)$, where $V$ is the set of nodes and $E_t$ is the set of edges in the graph at time $t$}. The node set $V$ represents the random variables in the model, while the edge set $E_t$ captures the conditional dependency between these variables that may change over time. In other words, if two nodes $i$ and $j$ are not connected by an edge, then the random variables $i$ and $j$ are independent at time $t$, conditioned on the rest of the variables; a feature known as \textit{Markov property}. 
The identification of such conditional independence properties would lead to simpler and more interpretable models, which are invaluable for improving human understanding of different phenomena. 

\subsection{Maximum Likelihood Estimation}
A popular approach for the inference of time-varying MRFs is based on the so-called \textit{maximum likelihood estimation} (MLE): to find a probabilistic graphical model, based on which the observed data is most probable to occur~\citep{wainwright2008graphical}. Despite being known as theoretically powerful tools---a fact noted as early as 1960s~\citep{kalman1960new, shellenbarger1966estimation}---MLE-based methods face several fundamental challenges. 

First, MLE-based approaches have overwhelmingly high computational complexity, which limits their application to small- and medium-scale problems. This challenge is further exacerbated in the time-varying setting where a graphical model needs to be inferred for {each time}, thereby leading to a dramatic increase in the number of parameters to be estimated. For instance, in order to obtain an $\epsilon$-accurate solution, typical MLE-based methods have complexity ranging from $\mathcal{O}(Tp^6\log(1/\epsilon))$ (via general interior-point methods)~\citep{mohan2014node, potra2000interior} to $\mathcal{O}(Tp^3/\epsilon)$ (via tailored first-order methods, such as ADMM)~\citep{hallac2017network, ravikumar2010high, banerjee2008model}. Here, $p$ is the number of unknown parameters, and $T$ is the number of time steps. Solvers with such computational complexity fall short of practical use in settings where the dimension $p$ is large, and the desired $\epsilon$ is small. This prohibitive complexity of MLE-based methods is also exemplified in their practical performance~\citep{fattahi2019graphical, zhang2018large, fattahi2019linear}.

Second, MLE-based methods fail to efficiently incorporate {prior} structural and temporal information into their estimation procedure. For instance, it is well-known that different time-varying networks exhibit sparse topologies, which can be captured via a combinatorial or {discrete regularizer} like the ``$\ell_0$-norm''. However, due to the combinatorial nature of the $\ell_0$-norm, MLE-based methods inevitably resort to {relaxed} or weaker variants of such regularization, such as $\ell_1$-norm, thereby suffering from inferior statistical properties~\citep{bertsimas2019certifiably}. In particular, a regularizer based on $\ell_1$-norm results in shrinkage of the entries of the precision matrix, and the bias due to the penalty can dominate the variance from the likelihood, ultimately leading to poor estimates. In \S2.2, we provide an example to illustrate the performance of such relaxation. 

Finally, in most applications, one needs to obtain a statistically meaningful regularization parameter via cross-validation~\citep{cox1979theoretical}, which amounts to solving the MLE repeatedly or parametrically for different values of the regularization parameter. However, obtaining parametric solutions to optimization or inference problems is a highly non-trivial task even if the fixed-parameter problem can be easily tackled. For example, while the well-known lasso regression can be solved efficiently, parametric algorithms such as LARS \citep{efron2004least} have in general exponential complexity \citep{mairal2012complexity}. 

\subsection{Our contributions}
To address the aforementioned challenges, our approach departs from the conventional wisdom in statistics and machine learning that the inference with discrete regularizers is intractable, and convex surrogates should be used instead. In fact, we prove the contrary: time-varying MRFs with discrete regularizers can be inferred {efficiently and parametrically}; that is, for \textit{all} sparsity  levels. Our contributions are as follows:
\begin{itemize}
\item[-] {\bf More tractable formulation:} We introduce a much simpler formulation for the inference of time-varying MRFs that can leverage and promote different sparsity and smoothness structures. Unlike most MLE-based methods which typically rely on convexity of the regularizer to ensure tractability, our proposed formulation can directly incorporate the nonconvex and discrete $\ell_0$ regularizer to promote sparsity in the estimated parameters and/or their differences.
\item [-] {\bf Recovering the solution path via dynamic programming:}  
We propose a dynamic programming approach that can recover the {entire} solution path of time-varying MRFs for \textit{all} sparsity levels, i.e., for all values of the regularization coefficients controlling the sparsity, with complexity $\cO(pT^3)$ (Theorem~\ref{thm_runtime}), which scales linearly with the dimension $p$. This makes our approach particularly suitable for cross-validation, where the goal is to find the regularization parameter that leads to the best generalization performance of the model.

\item [-] {\bf Statistical guarantees:} We show that our proposed estimation method enjoys a strong statistical guarantee, provided that an approximate backward mapping of the underlying distribution is available (Theorem~\ref{thm_constrained}). For two classes of Gaussian and Discrete time-varying MRFs, we derive sharp non-asymptotic guarantees for the estimated parameters, showing that they can be accurately inferred with as
few as \textit{one sample per time} (Propositions~\ref{prop_gmrf_sample_ker} and~\ref{prop_dmrf_sample_ker}). In the case where the underlying Markov graphs and their temporal differences are sparse, we establish high probability guarantees on the sparsistency of the obtained solutions and their differences. 

\item [-] {\bf Implementation:}
Using our algorithm, we can recover the entire solution path of an instance of time-varying MRFs with more than 30M variables in less than 12 minutes on a normal laptop computer. Our code, as well as our test cases, are available at \url{https://sites.google.com/usc.edu/gomez/data}. 
\end{itemize}


\noindent{\bf Notations.} 
The $i^{th}$ element of a time-series vector $v_t$ is denoted as $v_{t;i}$; the $(i,j)^{th}$  element of a time-indexed matrix $V_t$ is denoted as $V_{t;ij}$. For a vector $v$, the notation $v_{i:j}$ is used to denote the subvector of $v$ from index $i$ to $j$. The $\ell_q$-ball with radius $\rho$ centered at $\bar\mu$ is defined as $\mathcal{B}_{q}(\bar\mu, \rho) := \{\mu : \|\mu-\bar\mu\|_q\leq \rho\}$.
For a vector $v$, the notations $\|v\|_\infty$, $\|v\|_2$, $\|v\|_0$ denote the $\ell_\infty$ norm, $\ell_2$ norm, and ``$\ell_0$-norm'', i.e., the number of nonzero elements, respectively. Moreover, for a matrix $M$, the notations $\|M\|_2$, $\|M\|_\infty$, $\|M\|_{1/1}$, $\|M\|_{\linf}$ refer to the induced 2-norm, induced $\infty$-norm, $\ell_1/\ell_1$ norm, and $\ell_\infty/\ell_\infty$ norm, respectively. For a $d\times d$ matrix $M$, We define $\|M\|_{\loff} = \|M\|_{\lone}-\sum_{i=1}^d|M_{ii}|$. For a vector $v$ and matrix $M$, the notations $\mathrm{supp}(v)$ and $\mathrm{supp}(M)$ are defined as the location of their nonzero elements. Given two sequences $f(n)$ and $g(n)$, the notation $f(n)\lesssim g(n)$ implies that there exists a constant $C<\infty$ that satisfies $f(n) \leq Cg(n)$. Moreover, $f(n)\asymp g(n)$ implies that $f(n)\lesssim g(n)$ and $g(n)\lesssim f(n)$. Given two scalars $a$ and $b$, the symbols $a\wedge b$ and $a\vee b$ are used to denote their minimum and maximum, respectively.

	\section{Problem Formulation}
	In this work, we consider a class of time-varying MRFs that can be expressed as families of exponential distributions, defined as:
	\begin{align}\label{exp}
		\bP(x_t;\theta_t^\star) = \exp\left\{\langle \theta^\star_t, \phi(x_t)\rangle-A(\theta^\star_t)\right\}\qquad \text{for}\quad t=0,\dots,T.
	\end{align}
	Here, $x_t\in \R^n$ is the random variable at time $t$, $\theta^\star_t$ is the (unknown) true \textit{canonical parameter} from the domain $\cD\subseteq\mathbb{R}^p$,
 the function $\phi : \mathbb{R}^n\to\mathbb{R}^p$ is the \textit{sufficient statistics}, and $A: \mathbb{R}^p\to\mathbb{R}$ is the \textit{log-partition} function, which normalizes the distribution. Special classes of time-varying MRFs include time-varying Gaussian MRFs (GMRFs) and time-varying Discrete MRFs, 
	respectively corresponding to multivariate Gaussian and discrete random processes.
	As will be explained later, due to the equivalence between MRFs and exponential families, Markov graphs can be systematically obtained from their canonical parameters $\{\theta^\star_t\}_{t=0}^T$~\citep{wainwright2008graphical}.
	
	An alternative parameterization of exponential families is via \textit{mean} or \textit{moment parameters}, i.e., the moments of the sufficient statistics $\mu_t(\theta^\star_t) = \mathbb{E}_{\theta^\star_t}[\phi(x_t)]\in\cR$, where $\cR$ is the so-called \textit{moment polytope}. Given the true canonical parameters $\{\theta^\star_t\}_{t=0}^T$, the mean parameters $\{\mu_t(\theta^\star_t)\}_{t=0}^T$ can be obtained via the \textit{forward mapping} $F:\cD\to\cR$, where $\mu_t(\theta_t) = F(\theta_t) = \nabla A(\theta_t)$. The conjugate (or Fenchel) duality can be used to define the \emph{backward mapping} $F^*:\cR\to\cD$ with $\theta_t(\mu_t) = F^*(\mu_t) = \nabla A^*(\mu_t)$, where $A^*$ is the conjugate dual of the log-partition function~\citep[Chapter 3]{wainwright2008graphical}. For well-known classes of MRFs, the sufficient statistics, canonical parameters, and backward and forward mappings can be characterized based on the underlying probability distribution.
	\begin{figure}
		\centering
		\includegraphics[width=10cm]{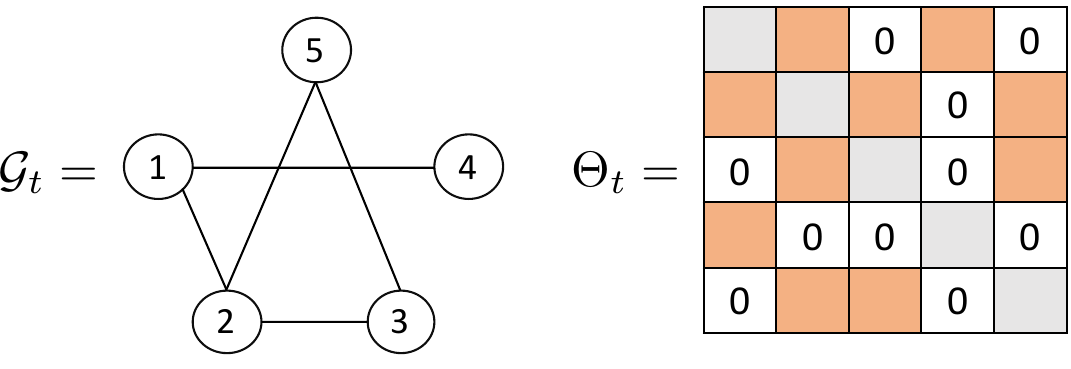}
		\caption{\label{fig_GMRF} (left) The Markov graph of a time-varying GMRF, (right) the sparsity pattern of the precision matrix.}
		\vspace{-3mm}
	\end{figure}
	\paragraph{Gaussian setting.} The canonical parameters in a Gaussian time-varying MRF (GMRF) correspond to the tuple of inverse covariance matrix $\Theta_t\in\bR^{n\times n}$ (also known as the precision matrix) and mean vector $\eta_t\in\bR^{n}$. In this setting, the exponential distribution~\eqref{exp} reduces to 
	\begin{align}\label{gmrf}
		\mathbb{P}(x_t) \!=\! \exp\left\{-\frac{1}{2}\langle\Theta^\star_t,x_tx_t^\top\rangle\!-\!A(\Theta^\star_t)\right\},
	\end{align}
	with sufficient statistics $x_tx_t^\top$, where without loss of generality we assumed that $\eta_t = \bE[x_t] = 0$. For GMRFs, the edge set of the Markov graph $\mathcal{G}_t$ coincides  with the off-diagonal nonzero elements of $\Theta^\star_t$~\citep{weiss2000correctness}. 
	Figure~\ref{fig_GMRF} shows the sparsity pattern of a precision matrix for a GMRF and its corresponding Markov graph. The moment parameter $\mu_t$ in this setting corresponds to the covariance matrix $\Sigma_t^\star = \bE[x_tx_t^\top]$. Accordingly, the forward and backward mappings $F:\cD\to\cR$ and $F^*:\cR\to\cD$ can be defined as $F(\Theta_t) = \Sigma_t = \Theta_t^{-1}$ and $F(\Sigma_t) = \Sigma_t^{-1}$ with $\cD = \{\Theta_t: \Theta_t = \Theta_t^\top, \Theta_t\succ 0\}$ and $\cR=\{\Sigma_t: \Sigma_t = \Sigma_t^\top, \Sigma_t\succ 0\}$.
	
	\paragraph{Discrete setting.} In discrete time-varying MRFs (DMRF), each element of the random variable $x_t$ takes a value from the discrete set $\mathcal{K}$ with cardinality $|\mathcal{K}| = K$. For simplicity, we assume that $\mathcal{K}\subseteq [0,1]$. In this case,~\eqref{exp} reduces to
	\begin{align}\label{dmrf}
		\mathbb{P}(x_t) \!=\! \exp\left\{\sum_{i\in[p], k\in\mathcal{K}}\theta^\star_{t;ik}\mathbb{I}[x_{t;i} = k]+\sum_{i,j\in[p], k,l\in\cK}\theta^\star_{t;ijkl}\mathbb{I}[x_{t;i} = k, x_{t;j} = l]-A(\theta^\star_t)\right\},
	\end{align}
	where $\theta^\star_{t;i\boldsymbol{\cdot}}\in\bR^{K}$ is a vector parameter for the node-wise indicator function $\mathbb{I}[x_{t;i} = k]$, and $\theta^\star_{t;ij\boldsymbol{\cdot}\boldsymbol{\cdot}}\in\bR^{K\times K}$ is a matrix parameter for the edge-wise indicator function $\mathbb{I}[x_{t;i} = k, x_{t;j} = l]$. The node- and edge-wise indicator functions form an (over-complete) sufficient statistics. In this case, the edge set of the Markov graph $\mathcal{G}_t$ corresponds to the nonzero matrix parameters $\theta^\star_{t;ij\boldsymbol{\cdot}\boldsymbol{\cdot}}$; in other words, $(i,j)\in E_t$ if and only if $\theta^\star_{t;ij\boldsymbol{\cdot}\boldsymbol{\cdot}} \not= 0_{K\times K}$.  The moment parameter, in this case, coincides with the node- and edge-wise marginal probabilities, defined as $\mu^\star_{t;ik} = \bE[\mathbb{I}[x_{t;i} = k]] = \bP(x_{t;i} = k)$ and $\mu^\star_{t;ijkl} = \bE[\mathbb{I}[x_{t;i} = k, x_{t;j} = l]] = \bP(x_{t;i} = k, x_{t;j} = l)$. Accordingly, the forward mapping $F:\cD\to\cR$ is a function that computes the node- and pair-wise marginal probabilities given the canonical parameters of the joint distribution. Moreover, the backward mapping $F^*:\cR\to\cD$ recovers the set of canonical parameters under which the joint probability distribution has node- and pair-wise marginal distributions that agree with the mean parameters. In this setting, $\cD = \{\{(\theta_{i\ccdot},\theta_{jv\ccdot\ccdot})\}_{i,j,v}|\theta_{i\ccdot}\in\bR^{K}, \theta_{jv\ccdot\ccdot}\in\bR^{K\times K}, \theta_{jv\ccdot\ccdot} = \theta_{jv\ccdot\ccdot}^\top, \theta_{jv\ccdot\ccdot} = \theta_{vj\ccdot\ccdot}, \forall i,j,v\in[p]\}$, and $\cR$ corresponds to the convex hull of all valid marginal distributions (also known as \textit{marginal polytope}). Unfortunately, obtaining the exact and compact representation of the marginal polytope is NP-hard~\citep{roughgarden2013marginals}, which leads to the intractability of computing the backward mapping for time-varying DMRFs~\citep{bresler2014hardness, montanari2015computational}. To circumvent this difficulty, a common approach is to resort to \textit{variational approximations} of the backward mapping via \textit{tree-reweighted entropy}~\citep{wainwright2003tree, wainwright2008graphical}, which will be discussed in detail in \S\ref{subsec_dmrf}.
	
	\subsection{Estimating the Canonical Parameters}
	In practice, the true mean parameters are rarely available and should be replaced by their \textit{empirical} analogs $\{\widehat{\mu}_t\}_{t=0}^T$, where $\widehat{\mu}_t = \frac{1}{N_t}\sum_{i=1}^{N_t}\phi\left(x^{(i)}_t\right)$ and $\left\{x^{(i)}_t\right\}_{i=1}^{N_t}$ are the samples at time $t$~\citep{jordan1999introduction}. 
	Given the empirical mean parameters and the backward mapping, one can estimate the canonical parameters from which the Markov graphs can be extracted. However, the backward mapping may be ill-defined or hard to derive explicitly, especially in the time-varying regime where the number of available samples at each time may be extremely small~\citep{wainwright2008graphical, zhou2010time}. For example, for the time-varying GMRF, the empirical moment parameter coincides with the sample covariance matrix $\widehat{\Sigma_t} = \frac{1}{N_t}\sum_{i=1}^{N_t}x^{(i)}_{t}{x^{(i)}_{t}}^\top$. Recalling the explicit form of the backward mapping, the precision matrix can be estimated as $\widehat{\Theta_t} = \widehat{\Sigma_t}^{-1}$. In time-varying settings, $N_t$ is often significantly smaller than $p$, which in turn implies that the sample covariance matrix is rank deficient and, therefore, not invertible. Under such circumstances, a consistent estimation of the canonical parameters is only possible by leveraging additional prior information on the structural or temporal properties of the MRF. For instance, many real-world problems have sparsely-~\citep{barabasi2004network, montanari2012graphical, ravikumar2023efficient} and/or smoothly-changing~\citep{sporns2004organization, yu2008small} structures. These structures can be promoted via different regularizers.

 For example, consider a sparsely-changing GMRF, where the time-varying precision matrices are sparse and change sparsely over time. The corresponding {$\ell_1$-regularized MLE}, also known as \textit{time-varying Graphical Lasso}~\citep{hallac2017network}, is defined as:
 \begin{subequations}\label{mle_reg_gmrf}
		\begin{align}
			\{\widehat{\Theta}_t\}_{t=0}^T =  \arg\!\sup_{\{{\Theta}_t\}_{t=0}^T}& \ \sum_{t=0}^{T}\left(\langle\Theta_t,\widehat{\Sigma}_t\rangle-\log\det(\Theta_t)\right)+\gamma_1 \sum_{t=0}^{T}\|\Theta_t\|_{\loff}+\gamma_2\sum_{t=0}^{T}\|\Theta_t- \Theta_{t-1}\|_{\lone}\\
			\text{s.t.}&\ \ \Theta_t\succ 0\qquad\quad t = 0,1,\dots, T
		\end{align}
	\end{subequations}
	where $\norm{\Theta_t}_1$ and $\norm{\Theta_t- \Theta_{t-1}}_{1/1}$ are convex relaxations of the ``$\ell_0$-norm'' that promote sparsity in the precision matrices and their differences. 
 However, $\ell_1$-regularized MLE suffers from inferior statistical properties, which is shown in our next example. 
	\begin{figure}
		\centering
		\subfloat[]{%
			\includegraphics[width=6.5cm]{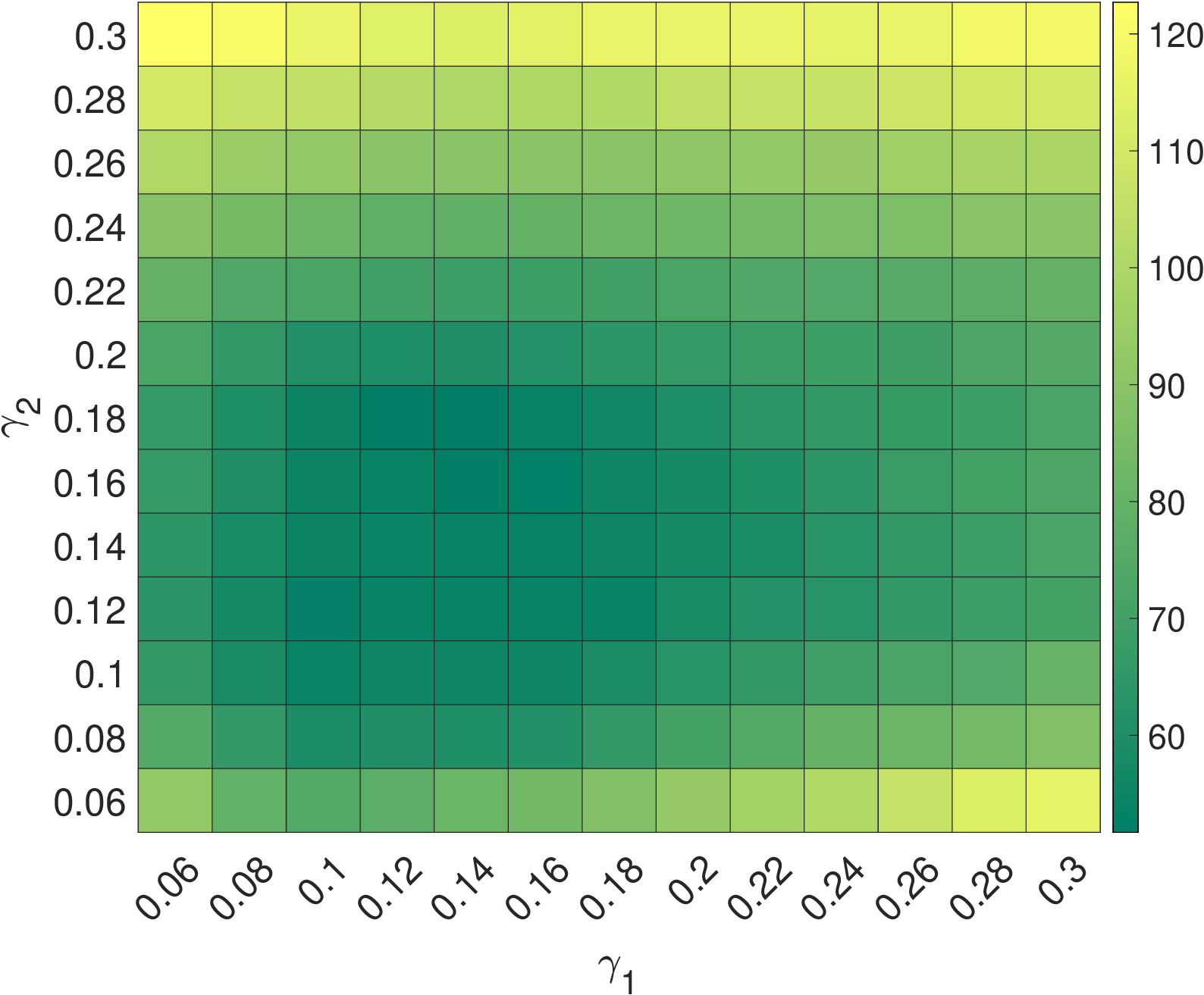}\label{fig_mle_heatmap}
		}
		\subfloat[]{
			\includegraphics[width=6.8cm]{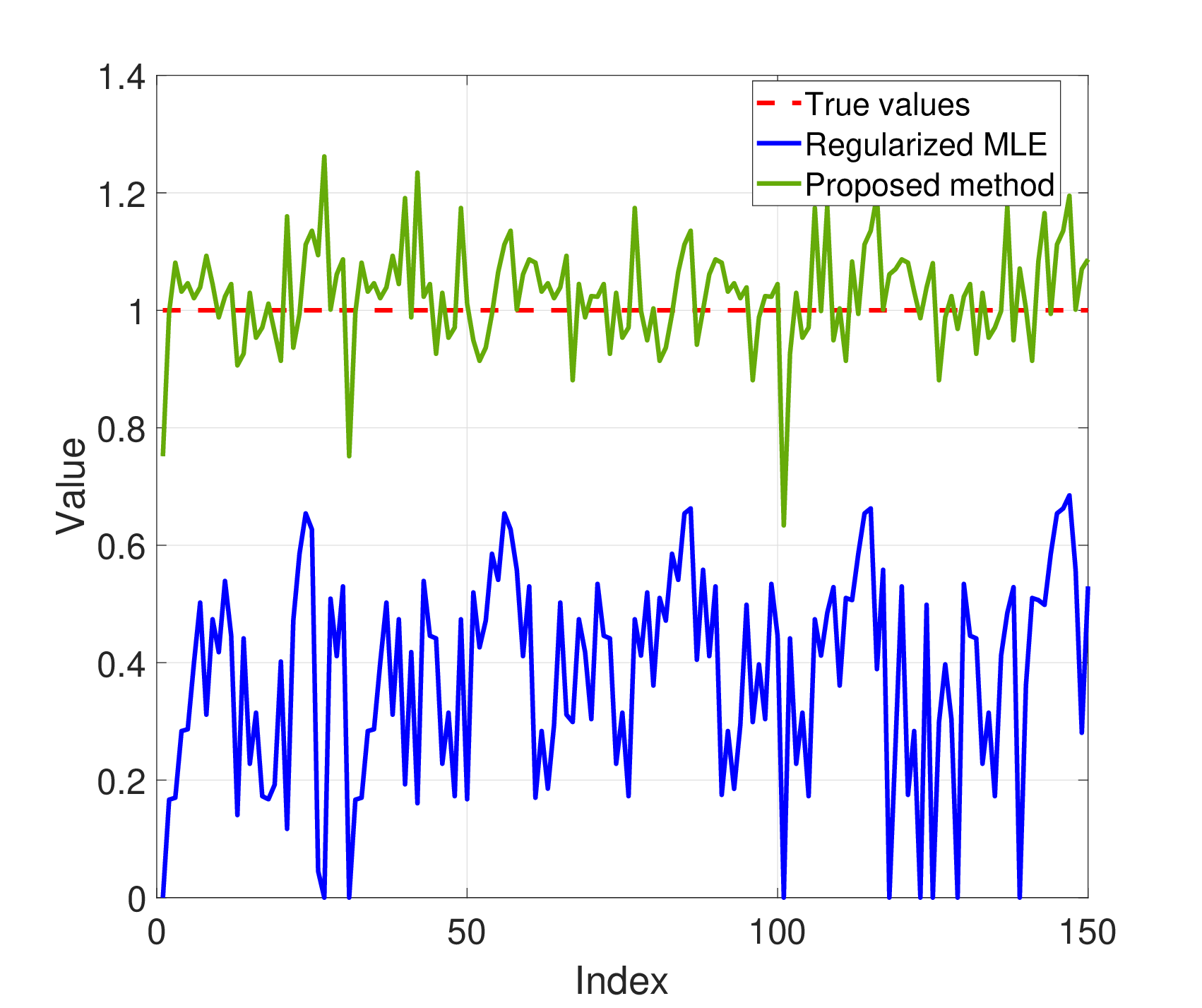}\label{fig_nonzeros_mle_small}
		}
		\caption{(a) The heatmap of the mismatch error. (b) The true and estimated nonzero elements of the precision matrix.}
	\end{figure}
	\paragraph{Example (regularized MLE for time-varying GMRF).} Consider an instance of time-varying GMRF with $T = 5$ and $n = 25$, where $\{\Theta^\star_t\}_{t=0}^4 \in\mathbb{R}^{25\times 25}$ are randomly generated symmetric and sparse matrices. At each time $t = 0,\dots,4$, the true precision matrix $\Theta^\star_t$ has exactly 30 off-diagonal elements with value one (while maintaining its symmetry), and the remaining off-diagonal entries are set to zero. Moreover, the diagonal entries $\Theta^\star_{t;ii}$ are set to $1+\sum_{j\not=i}\Theta^\star_{t;ij}$ to ensure $\Theta^\star_t\succ 0$ for every $t = 0,\dots, 4$. At every time, 5 nonzero off-diagonal elements are changed to zero, and 5 zero elements are set to one. The sample covariance $\widehat{\Sigma}_t$ for every $t = 0,\dots, 4$ is obtained by collecting 500 samples from the Gaussian distribution with the constructed precision matrices. 
 Figure~\ref{fig_mle_heatmap} illustrates a heatmap of the \textit{mismatch error}, i.e., the total number of false positives and false negatives in the sparsity patterns of the true and estimated precision matrices and their differences, for different values of the regularization coefficients. It can be seen that after an exhaustive search over the regularization coefficient space, the best achievable mismatch error is in the order of 50. Note that simply predicting a fully sparse precision matrix has a mismatch error of 190. In light of this, the estimated parameters reveal little information about the true structure of the time-varying GMRF. Moreover, Figure~\ref{fig_nonzeros_mle_small} shows the concatenation of the nonzero elements in the true precision matrices (dashed red line), and their corresponding estimated values via the regularized MLE (blue) and our proposed method (green). It can be seen that, even when the sparsity pattern of the elements is correctly recovered, the estimated nonzero entries via regularized MLE are ``shrunk'' toward zero due to the shrinking effect of $\ell_1$ regularizer, thereby incurring a {substantial} bias. In contrast, our proposed method circumvents this undesirable bias by directly employing the $\ell_0$ regularizer instead.

	\section{Related Work}

\label{sec:literature}

\paragraph{\bf Time-varying MRF.} 
The problem of inferring time-varying MRFs can be traced back to Kalman filters, where the goal is to predict a random signal evolving over time by filtering out the observational noise~\citep{kalman1960new, brown1992introduction}. More recent results have studied the non-asymptotic inference of time-varying MRF with side information, such as sparsity and smoothness. 
In addition to the time-varying Grahical Lasso introduced in \eqref{mle_reg_gmrf}, 
a recent line of works has studied the inference of smoothly-changing MRFs~\citep{kolar2011time,greenewald2017time,zhou2010time}. These methods rely on kernel methods, where the empirical mean parameter at any given time is estimated as a weighted average of the samples over time, and the weights are collected from a predefined kernel. However, these methods do not leverage the prior information about the sparsity of the parameter differences. With the goal of addressing this deficiency, several works have studied the inference of sparsely-changing MRF (also known as sparse \textit{differential networks})~\citep{wang2018fast, zhao2014direct, liu2017learning}. However, the main drawback of these methods is that they only estimate the parameter differences, and their theoretical guarantees are restricted to problems with two time steps ($T=1$). Similarly, regression-based approaches have been proposed for change point detection problems \citep{kolar2012estimating,roy2017change} with MRFs and two time periods, assuming the sparsity pattern of all entries of the precision matrices change at the same time. In contrast, \cite{wang2014inference} have studied the inference of sparse MRFs given an index variable under the assumption that the sparsity pattern is invariant, whereas \cite{geng2019partially} have assumed that the precision matrix is a linear function of the index variable.

\paragraph{Sparsity-promoting optimization.} 
Unfortunately, solving statistical inference problems with sparsity is often an NP-hard discrete optimization problem \citep{chickering1996learning,natarajan1995sparse}, and exact methods requiring exhaustive search \citep{lauritzen1996graphical} are impractical.
Perhaps the most popular approaches are \textit{Fused Graphical Lasso} and \textit{Group Graphical Lasso}~\citep{danaher2014joint}, which propose to relax $\|\theta_t\|_0$ and $\|\theta_t-\theta_{t-1}\|_0$ with their $\ell_1$-approximations. 
The $\ell_1$-norm proxy for sparsity is by now a standard approach in inference with graphical models \citep{banerjee2008model,friedman2008sparse,meinshausen2006high}. However, such relaxation results in inferior statistical performance than its exact, non-convex counterpart \citep{bertsimas2019certifiably}. Moreover, MLE-based estimators with $\ell_1$ regularization are statistically consistent only under stringent conditions, while in some cases, their nonconvex counterparts do not require such strong restrictions \citep{lam2009sparsistency}.

Also closely related to our setting is the class of optimization problems of the form 
\small\begin{align}\label{pairwise}
\min_{ \theta\in [\ell,u]^p}\;& \|\theta\|_0+\sum_{i=1}^p\sum_{j=i+1}^p g_{ij}(\theta_i-\theta_j),
\end{align}\normalsize
for given one-dimensional functions $g_{ij}:\R\to \R$,.  If functions $g_{ij}$ are convex, then problem \eqref{pairwise} admits pseudo-polynomial time algorithm \citep{ahuja2004cut,bach2019submodular}. Moreover, convex relaxations that deliver near-optimal solutions for \eqref{pairwise} were proposed for the special case of convex quadratic $g$ functions \citep{atamturk2018sparse}. If, additionally, we have $\ell=0$ and $u=\infty$, then problem \eqref{pairwise} is, in fact, solvable in strongly polynomial time \citep{atamturk2018strong}. As another special case, if $\ell=-\infty$, $u=\infty$, and $g_{ij}(x)=0$ whenever $j>i+1$, then the problem \eqref{pairwise} with convex quadratic $g$ functions can be solved in quadratic time~\citep{liu2022graph}. On the other hand, problem \eqref{pairwise} is much more challenging for non-convex $g$: if $g(x)=\mathbbm{1}\left\{x\neq 0\right\}$, as is the case for sparsely-changing MRFs, then problem \eqref{pairwise} is NP-hard even if the term $\|\theta\|_0$ is dropped from the objective \citep{hochbaum2001efficient}. Nonetheless, as we show in this paper, problem \eqref{pairwise} can be solved efficiently in the context of time-varying MRFs, where $g_{ij}(x)=0$ whenever $j>i+1$. 

	\vspace{2mm}
	\section{Proposed Method}
 We propose the following class of constrained optimization problems:

	\vspace{6mm}
	\noindent\fbox{\parbox{\textwidth-3mm}{
	\begin{equation}\label{generalopt1}\tag{ProxGL}
		\begin{aligned}
			\{\widehat\theta_t\}_{t=0}^T = \arg\min_{\{{\theta}_t\}_{t=0}^T}&\gamma \!\!\!\!\underbrace{\sum_{t=0}^T\norm{\theta_t}_0}_{\textbf{absolute regularizer}}+(1-\gamma)\underbrace{\sum_{t=1}^T\norm{\theta_t-\theta_{t-1}}^q_q}_{\textbf{temporal regularizer}},\\
			\mathrm{s.t.}&\underbrace{\norm{\theta_t-\widetilde F^*(\widehat\mu_t)}_{\infty}}_{\textbf{backward mapping deviation}}\leq \lambda_t \qquad \forall t = 0,\dots, T,
		\end{aligned}
	\end{equation}
}}

\vspace{6mm}
	\noindent where $q\geq 0$ in the temporal $\ell_q$-regularizer is chosen to match the temporal changes in the canonical parameters. For example, $q=0$ is a natural choice for sparsely-changing MRF where only a few elements of the canonical parameters change in consecutive times, while $q=2$ is suitable for smoothly-changing MRFs where canonical parameters change smoothly over time. The parameter $0\leq\gamma\leq1$ is a regularization parameter that is used to balance the sparsity level and the temporal changes. Indeed, a larger value for $\gamma$ would lead to sparser parameters, whereas a smaller $\gamma$ would result in smaller temporal changes in the parameters. In the above optimization, the \textit{backward mapping deviation} controls the deviation of the canonical parameters from an approximate backward mapping $\widetilde F^*(\widehat\mu_t)$, which we assume can be obtained directly from the data. In \S\ref{sec_stats}, we provide a detailed discussion on how to obtain such approximate backward mapping for different classes of time-varying MRFs.
 Despite its nonconvexity, our next theorem shows that \ref{generalopt1} can be solved efficiently. Recall that $T$ is the number of time steps, and $p$ is the dimension of the canonical parameter at each time. 
	
	\begin{sloppypar}
 \begin{theorem}[Efficient algorithm]\label{thm_runtime}
 For all values of the regularization parameter $\gamma\in [0,1]$ and any $q\in \{0\}\cup [1,\infty)$, the entire solution path of~\ref{generalopt1} can be obtained in at most $\mathcal{O}(pT^3)$ time on a single thread.
\end{theorem}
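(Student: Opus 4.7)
The plan is to exploit the \emph{coordinate separability} of~\ref{generalopt1}, reducing it to $p$ independent $1$-dimensional time-series problems, and then to extract the entire $\gamma$-path of each via a segmentation dynamic program.

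\emph{Step 1 (coordinate decoupling).} Since $\norm{\theta_t}_0 = \sum_{i=1}^p \mathbb{I}[\theta_{t;i}\neq 0]$, $\norm{\theta_t-\theta_{t-1}}_q^q = \sum_{i=1}^p |\theta_{t;i}-\theta_{t-1;i}|^q$, and $\norm{\theta_t-\widetilde F^*(\widehat\mu_t)}_\infty\le\lambda_t$ is equivalent to the $p$ box constraints $\theta_{t;i}\in I_{t;i}:=[\widetilde F^*(\widehat\mu_t)_i-\lambda_t,\ \widetilde F^*(\widehat\mu_t)_i+\lambda_t]$, problem \ref{generalopt1} decomposes exactly into $p$ independent 1D problems over $(\theta_{0;i},\ldots,\theta_{T;i})$. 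Hence it suffices to produce the entire $\gamma$-path of a single such 1D instance in $O(T^3)$ time.

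\emph{Step 2 (segment-cost table and segmentation DP).} Fix a coordinate $i$ and, for each pair $0\le s\le t\le T$, let $R(s,t)$ denote the minimum temporal-regularizer cost of a maximal nonzero ``run'' occupying exactly the indices $\{s,\ldots,t\}$, including the boundary terms $|\theta_s|^q$ (if $s\ge 1$) and $|\theta_t|^q$ (if $t\le T-1$) contributed by the adjacent zero blocks. For $q\in[1,\infty)$, $R(s,t)$ is a univariate box-constrained convex chain problem in $t-s+1$ variables and admits an $O(t-s+1)$-time solver by standard chain-convex-minimization techniques (e.g., pool-adjacent-violators for $q=1$ or a tridiagonal solve for $q=2$); for $q=0$ the evaluation is trivially combinatorial. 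Thus the full table $\{R(s,t)\}$ is built in $O(T^3)$ time. Every feasible coordinate-$i$ solution partitions $\{0,\ldots,T\}$ into maximal zero and nonzero runs, and its objective decomposes additively as $\sum_{\text{nonzero runs }(s,t)}[\gamma(t-s+1)+(1-\gamma)R(s,t)]$. A forward DP with state $(t,k)$---current time and number of nonzeros used so far---and $O(T)$ transitions per state (``either set $\theta_t=0$, or close off a nonzero run $(s,t)$'') then tabulates, in $O(T^3)$ time, the Pareto frontier $c^\star(k):=\min\{\text{total temporal cost}\,:\,\text{exactly $k$ nonzeros}\}$ for every $k\in\{0,\ldots,T+1\}$.

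\emph{Step 3 (parametric extraction) and main obstacle.} For any $\gamma$, the optimum of the coordinate-$i$ subproblem equals $\min_{k}[\gamma k+(1-\gamma)c^\star(k)]$, the pointwise minimum of at most $T+2$ affine functions of $\gamma$. Its lower envelope---i.e.\ the complete solution path as a function of $\gamma$---is obtained by a standard convex-hull sweep in $O(T\log T)$ additional time. Summing over the $p$ coordinates yields the claimed $O(pT^3)$ bound. The main conceptual difficulty is arguing that the full $\gamma$-path is producible within the same asymptotic budget as a single instance: this works because, after coordinate decoupling, the objective depends on $\gamma$ only through the scalar count $k$, so the entire path collapses to a lower envelope of $O(T)$ lines recoverable from one DP pass. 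A secondary technical point is verifying that each segment-cost subproblem $R(s,t)$ admits a linear-time solver for every $q\in\{0\}\cup[1,\infty)$, so that the $O(T^2)$-entry cost table itself stays within the $O(T^3)$ budget.
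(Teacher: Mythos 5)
Your overall architecture is the same as the paper's: decouple \ref{generalopt1} into $p$ independent one-dimensional chain problems via the $\ell_\infty$ constraint, build a table of ``segment costs'' for maximal nonzero runs (your $R(s,t)$ is the paper's $f(a,b)$ in \eqref{eq:defCont}), run a cardinality-indexed dynamic program (your state $(t,k)$ is the paper's $\nu(b,k)$, and your observation that the $\gamma$-path reduces to solving the cardinality-constrained problem for all $k$ is exactly the paper's reformulation \eqref{eq:optCard}), and recover the path as the lower envelope of $O(T)$ affine functions of $\gamma$. So the parametric mechanism and the $O(pT^3)$ accounting are correct and match the paper.

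The one place where your argument is asserted rather than established is the evaluation oracle for $R(s,t)$ when $q\geq 1$. Pool-adjacent-violators solves isotonic regression, not the box-constrained total-variation chain, and a tridiagonal solve for $q=2$ ignores the box constraints entirely; for general $q\in[1,\infty)$ there is no off-the-shelf ``standard chain-convex-minimization'' result giving a linear-time solver. This is precisely the step the paper has to work for: Lemma~\ref{lem:interpolation} establishes that in an optimal segment solution every interior variable is either at a bound or the midpoint of its neighbors, so the solution is piecewise-linear interpolation between bound-touching indices, and the recursion \eqref{eq:recursion2} over the last bound-touching index then computes the \emph{entire} $O(T^2)$ table of segment costs in $O(T^3)$ total (amortized $O(T)$ per entry, rather than a literal linear-time solver per entry). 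Your total budget still closes because the paper's method fits inside it, but as written the claim that each $R(s,t)$ is individually solvable in $O(t-s+1)$ time for every $q\geq 1$ is unsupported and is the missing lemma in your proof.
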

	\end{sloppypar}
	
	The proof of Theorem~\ref{thm_runtime} is provided in \S\ref{sec_algorithm} together with the detailed implementation of our proposed algorithm.
	To the best of our knowledge, Theorem~\ref{thm_runtime} is the first result showing that the entire solution path of time-varying MRFs can be estimated in strongly polynomial time. 
	The availability of the solution path facilitates the automated fine-tuning of the sparsity level and/or smoothness of the obtained canonical parameters via cross-validation or by solving a bilevel optimization, where the estimated canonical parameters on the training set are parameterized as functions of $\gamma$ and optimized on the validation set~\citep{sinha2014bilevel, mackay2019self}.
	Our algorithm for solving~\ref{generalopt1} relies on two key properties of our proposed formulation: First,~\ref{generalopt1}  is decomposable into $p$ independent subproblems over different coordinates of $\{\theta_t\}_{t=0}^T$. This guarantees that the complexity of our algorithm depends linearly on the dimension $p$. Moreover, we propose a dynamic programming approach that can solve each subproblem in $O(T^3)$ for all values of $\gamma$.

	Given the efficient solvability of~\ref{generalopt1}, our next goal is to show that the obtained solution indeed enjoys a small estimation error. To this goal, we formally define two classes of temporal structures for the canonical parameters, namely sparsely-changing MRFs and smoothly-changing MRFs. 
	\begin{definition}[Sparsely-changing MRF]\label{assump:sparse}
		A time-varying MRF is sparsely-changing with parameter $D_0\geq 0$ if $\norm{\theta^\star_{t}-\theta^\star_{t-1}}_0\leq D_0$ for every $t = 1,\dots,T$.
	\end{definition}

Intuitively, we say that ``a time-varying MRF is sparsely-changing'' if it is sparsely-changing with a parameter that satisfies $D_0\ll p$. We note that, if $\norm{\theta^\star_t}_0\leq S_0$ for every $t$, then $D_0\leq 2S_0$. In other words, time-varying MRFs with sparse parameters are also sparsely-changing with parameter $2S_0$. 
	
	\begin{sloppypar}
		\begin{definition}[Smoothly-changing MRF]\label{assump:smooth}
			A time-varying MRF is smoothly-changing with parameters $(q, D)$ if $\underset{1\leq i\leq n}{\max}\left\{\sum_{t=1}^T|\theta^\star_{t;i}-\theta^\star_{t-1;i}|^q\right\}\leq D$.
		\end{definition}
	\end{sloppypar}
	Similarly, We say ``a time-varying MRF is smoothly-changing'' if it is smoothly-changing with $q\geq 1$ and $D\ll T$. Indeed, a time-varying MRF can be simultaneously sparsely-changing and smoothly-changing. 
In such settings, only a few elements of the canonical parameters change over time and the changes are smooth. 
 However, to streamline the presentation, we study these two scenarios separately. 

Let $\mathcal{S}_t$ denote the set of indices corresponding to the nonzero elements of $\theta_t^\star$ for every $t=0,\dots,T$. Similarly, let $\mathcal{D}_t$ denote the set of indices corresponding to the nonzero elements of $\theta_t^\star-\theta_{t-1}^\star$ for every $t=1,\dots,T$. Our next theorem provides conditions under which an optimal solution of~\ref{generalopt1} enjoys a small estimation error and correct sparsity pattern.
	\begin{sloppypar}
		\begin{theorem}[Estimation error and sparsistency]\label{thm_constrained}
			 Suppose that the parameter $\lambda_t$ in \ref{generalopt1} satisfies
			\begin{align}\tag{A1}
				\norm{\theta_t^\star-\widetilde{F}^*(\widehat{\mu}_t)}_\infty\leq \lambda_t, \quad \forall t = 0,\dots, T.\label{asp_A1}
			\end{align}
			Then the following statements hold:
			\begin{itemize}
				\item \textbf{Estimation error.} We have
				\begin{align}
				\norm{\widehat{\theta}_t-\thetatrue}_\infty\leq 2\lambda_t, \quad \forall t = 0,\dots,T.\label{eq_error}
				\end{align}
				\item \textbf{Sparsistency for smoothly-changing MRF.} Suppose that the time-varying MRF is {smoothly-changing} with parameters $(q,D)$ for some $q\geq 1, D\geq 0$. Additionally, suppose that $0<\gamma<1/(1+D)$ and $2\lambda_t\leq\min_{i\in\mathcal{S}_t}|\theta_{t;i}^\star|$. Then, the optimal solution of~\ref{generalopt1} with temporal $\ell_q$-regularizer satisfies:
				\begin{align}\label{eq_sparse_q}
					\mathrm{supp}(\widehat{\theta}_t) = \mathrm{supp}({\theta}^\star_t), \qquad \forall t = 0,\dots,T.
				\end{align}
				\item \textbf{Sparsistency for sparsely-changing MRF.} Suppose that the time-varying MRF is {sparsely-changing} with parameter $D_0\geq 0$. Moreover, suppose that $2\lambda_t\leq\min_{i\in\mathcal{S}_t}|\theta_{t;i}^\star|$ and  $2\lambda_t+2\lambda_{t-1}\leq \min_{i\in\mathcal{D}_t}|\theta_{t;i}^\star-\theta_{t-1;i}^\star|$. Then, with any choice of $0<\gamma<1$, the optimal solution of~\ref{generalopt1} with temporal $\ell_0$-regularizer satisfies:
				\begin{equation}\label{eq_sparse_0}
					\begin{aligned}
					& \mathrm{supp}(\widehat{\theta}_t) = \mathrm{supp}({\theta}^\star_t), && \qquad \forall t = 0,\dots,T,\\
					& \mathrm{supp}(\widehat{\theta}_t-\widehat{\theta}_{t-1}) = \mathrm{supp}({\theta}^\star_t-{\theta}^\star_{t-1}), && \qquad \forall t = 1,\dots,T.
					\end{aligned}
				\end{equation}
			\end{itemize}
		\end{theorem}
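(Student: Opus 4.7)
The plan rests on two structural observations. First, assumption~\eqref{asp_A1} makes $\theta^\star$ feasible for~\eqref{generalopt1}. Second, the formulation~\eqref{generalopt1} separates across the $p$ coordinates into independent one-dimensional subproblems, since the $\ell_\infty$ constraint and both the $\|\cdot\|_0$ and $\|\cdot\|_q^q$ penalties are coordinate-separable. Part 1 will then follow at once: both $\widehat{\theta}_t$ and $\theta^\star_t$ lie in the $\ell_\infty$-ball of radius $\lambda_t$ around $\widetilde{F}^*(\widehat{\mu}_t)$, so the triangle inequality gives $\|\widehat{\theta}_t-\theta^\star_t\|_\infty\le 2\lambda_t$, which is~\eqref{eq_error}.

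For the sparsely-changing case (Part 3), I plan to use Part 1 in two complementary forms. For each $i\in\mathcal{S}_t$, the min-signal condition and $|\widehat{\theta}_{t;i}-\theta^\star_{t;i}|\le 2\lambda_t$ force $\widehat{\theta}_{t;i}\ne 0$; for each $i\in\mathcal{D}_t$, combining the errors at times $t-1$ and $t$ with $2\lambda_t+2\lambda_{t-1}\le |\theta^\star_{t;i}-\theta^\star_{t-1;i}|$ forces $\widehat{\theta}_{t;i}\ne \widehat{\theta}_{t-1;i}$. Hence, fixing a coordinate $i$ and letting $A_i^\star,A_i$ count the nonzero entries of $\{\theta^\star_{t;i}\}_t$ and $\{\widehat{\theta}_{t;i}\}_t$ respectively, and $C_i^\star,C_i$ count the nonzero consecutive differences, the two set inclusions yield $A_i\ge A_i^\star$ and $C_i\ge C_i^\star$. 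Coordinate-wise optimality of $\widehat{\theta}$ against the feasible $\theta^\star$ then reads $\gamma A_i+(1-\gamma)C_i\le \gamma A_i^\star+(1-\gamma)C_i^\star$; since both gaps are nonnegative and $\gamma,1-\gamma>0$, each must vanish, and combined with the set inclusions this upgrades the cardinality equalities to the support equalities~\eqref{eq_sparse_0}.

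For the smoothly-changing case (Part 2), the no-false-negatives direction is identical. The no-false-positives direction is more delicate: with $B_i=\sum_{t=1}^T|\widehat{\theta}_{t;i}-\widehat{\theta}_{t-1;i}|^q$ and $B_i^\star$ defined analogously for $\theta^\star$, coordinate-wise optimality reads $\gamma A_i+(1-\gamma)B_i\le \gamma A_i^\star+(1-\gamma)B_i^\star$, but now the companion inequality $B_i\ge B_i^\star$ need not hold. Instead, I would rearrange to bound the integer-valued gap $A_i-A_i^\star$ by $\tfrac{1-\gamma}{\gamma}(B_i^\star-B_i)\le\tfrac{1-\gamma}{\gamma}B_i^\star\le\tfrac{(1-\gamma)D}{\gamma}$, using $B_i\ge 0$ and the smoothly-changing bound $B_i^\star\le D$. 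The condition on $\gamma$ in the theorem is calibrated so that this upper bound is strictly less than $1$, forcing $A_i=A_i^\star$; combined with the reverse inclusion, this yields~\eqref{eq_sparse_q}. The main obstacle is precisely this asymmetry between the $\ell_q^q$ temporal penalty and the $\ell_0$ absolute penalty: the smoothness term may ``prefer'' to fill in a true zero with a small nonzero value, and ruling this out requires coupling coordinate decomposability with the bound $D$ and the calibrated restriction on $\gamma$.
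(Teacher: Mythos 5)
Your proposal follows essentially the same route as the paper's proof: feasibility of $\theta^\star$ plus the triangle inequality for the error bound; the min-signal conditions to rule out false negatives (and, for $q=0$, false negatives in the differences); and coordinate decomposability plus optimality-versus-feasibility to rule out false positives, with the $\ell_0$ case closing because both cardinality gaps are nonnegative and the $\ell_q$ case closing because the integer gap $A_i-A_i^\star$ is bounded by a quantity that the $\gamma$-condition forces below $1$.

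The one step that does not go through as you wrote it is the final calibration in Part~2. Taking the \ref{generalopt1} objective literally, i.e.\ weight $\gamma$ on the absolute ($\ell_0$) regularizer and $1-\gamma$ on the temporal term, your rearrangement gives $A_i-A_i^\star\leq \frac{(1-\gamma)D}{\gamma}$, and this is strictly less than $1$ if and only if $\gamma>D/(1+D)$ --- not under the stated hypothesis $\gamma<1/(1+D)$ (for $D\geq 1$ the two conditions are even disjoint). The paper's own proof silently swaps the roles of $\gamma$ and $1-\gamma$ at this point, placing $1-\gamma$ on the $\ell_0$ term, which yields $A_i-A_i^\star\leq \frac{\gamma D}{1-\gamma}<1\iff \gamma<1/(1+D)$ and so matches the theorem's hypothesis; this is an internal inconsistency between the displayed objective of \ref{generalopt1} and the proof. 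So your assertion that ``the condition on $\gamma$ is calibrated so that this upper bound is strictly less than $1$'' is exactly the right idea, but under the literal objective the arithmetic requires the reversed condition $1-\gamma<1/(1+D)$; you should either adopt that condition or note the swap explicitly. Parts~1 and~3 are unaffected, since there both regularizer coefficients are positive and both gaps are nonnegative, so the precise weighting is immaterial.
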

	\end{sloppypar}

	The above theorem shows that if the true canonical parameters remain feasible for~\ref{generalopt1} with sufficiently small $\lambda_t$, then the estimated parameters have small errors and correct sparsity patterns. Moreover, if the time-varying MRF is sparsely-changing, then the solution of~\ref{generalopt1} with a temporal $\ell_0$-regularizer recovers the correct sparsity pattern of the individual parameters, as well as their temporal changes. In \S\ref{sec_stats}, we use the above theorem to characterize the sample complexity of our proposed framework for two classes of time-varying MRFs, namely GMRFs and DMRFs.
Next, we present the proof of Theorem~\ref{thm_constrained}.
\vspace{2mm}

\noindent {\bf Proof of Theorem~\ref{thm_constrained}.}
We first present the proof of estimation error bound~\eqref{eq_error}. 
	\paragraph{\it \underline{Estimation error bound.}}
	The solution of~\ref{generalopt1} satisfies
	\begin{align*}
		\norm{\widehat{\theta}_t-\theta^\star_t}_\infty\leq \norm{\thetaest-\bmest}_\infty+\norm{\thetatrue-\bmest}_\infty\leq 2\lambda_t,
	\end{align*}
	where in the first inequality, we used the triangle inequality, and in the second inequality, we used the assumption that the true canonical parameter $\{\theta^\star_t\}_{t=0}^T$ is feasible for~\ref{generalopt1}.
	
	\paragraph{\it \underline{Sparsistency.}} Next, we show the sparsistency of the estimated parameters for any $q\in\{0\}\cup [1,\infty)$. First, we prove $\supp(\thetaest) = \supp(\thetatrue)$ for any $q\geq 1$. To this goal, we show $\supp(\thetaest)\subseteq \supp(\thetatrue)$ and $\supp(\thetatrue)\subseteq\supp(\thetaest)$. For any $i\in\mathcal{S}_t$, we have
	\begin{align}
		\left|\widehat \theta_{t;i}\right| &= \left|\widehat \theta_{t;i}-\theta_{t;i}^\star+\theta_{t;i}^\star\right|\nonumber\\
		&\geq \left|\theta_{t;i}^\star\right| - \left|\widehat \theta_{t;i}-\theta_{t;i}^\star\right|\nonumber\\
		&\geq \left|\theta_{t;i}^\star\right|-2\lambda_t\nonumber\\
		&>0\nonumber,
	\end{align}
	where in the second inequality we used the upper bound on the estimation error, and in the last inequality we used the assumption $ 2\lambda_t\leq \min_{i\in\mathcal{S}_t}|\theta^\star_{t;i}|$. This implies that $\supp(\thetatrue)\subseteq\supp(\thetaest)$. To prove $\supp(\thetaest)\subseteq\supp(\thetatrue)$, we first rewrite~\ref{generalopt1} as follows:
	\begin{equation}\label{generaloptrepeated}
\begin{aligned}
		 \{\widehat{\theta}_t\}_{t=0}^T = \arg\min_{\{{\theta}_t\}_{t=0}^T}&\gamma \sum_{t=0}^T\sum_{i=1}^n\mathbb{I}[\theta_{t;i} \neq 0]+(1-\gamma)\sum_{t=1}^T\sum_{i=1}^n\left|\theta_{t;i}-\theta_{t-1;i}\right|^q\\
			\mathrm{s.t.}\; &\left|\theta_{t;i}- \left[\widetilde F^*(\widehat\mu_t)\right]_i\right|\leq \lambda_t\qquad \forall t=0,\dots,T,\; i=1,\dots,p.
		\end{aligned}
	\end{equation}
We make the following observation. First, due to the use of the infinity norm for the backward mapping deviation, problem \eqref{generaloptrepeated} decomposes into $p$ independent subproblems, one for each coordinate of $\theta_t$.
	On the other hand, due to the optimality of $\{\thetaest\}_{t=0}^T$ and the feasibility of $\{\thetatrue\}_{t=0}^T$, we have for every $1\leq i\leq p$
	\begin{align}
		&(1\!-\!\gamma)\sum_{t=0}^T\mathbb{I}\{\widehat\theta_{t;i}\not=0\}\!+\!\gamma\sum_{t=1}^T{\left|\widehat{\theta}_{t;i}\!-\!\widehat{\theta}_{t-1;i}\right|^q}\!\leq\! (1-\gamma)\sum_{t=0}^T\mathbb{I}\{\theta^\star_{t;i}\not=0\}\!+\!\gamma\sum_{t=1}^T{\left|{\theta}^\star_{t;i}-{\theta}^\star_{t-1;i}\right|^q}\nonumber\\
		\implies&(1-\gamma)\sum_{t=0}^T\mathbb{I}\{\widehat\theta_{t;i}\not=0\}
		\leq (1-\gamma)\sum_{t=0}^T\mathbb{I}\{\theta^\star_{t;i}\not=0\}+\gamma D\nonumber\\
		\implies & (1\!-\!\gamma)\!\sum_{t=0}^T\mathbb{I}\{\widehat\theta_{t;i}\!\not=\!0\}\mathbb{I}\{i\!\in\!\mathcal{S}_t\} \!+\!(1\!-\!\gamma)\!\sum_{t=0}^T\mathbb{I}\{\widehat\theta_{t;i}\!\not=\!0\}\mathbb{I}\{i\!\in\![n]\backslash\mathcal{S}_t\}
		\!\leq\! (1\!-\!\gamma)\!\sum_{t=0}^T\!\left|{\theta}^\star_{t;i}\right|_0\!+\!\gamma D_q\nonumber\\
		\implies & (1-\gamma)\sum_{t=0}^T\mathbb{I}\{\widehat\theta_{t;i}\not=0\}\mathbb{I}\{i\in[n]\backslash\mathcal{S}_t\}
		\leq \gamma D\nonumber\\
		\implies & \sum_{t=0}^T\mathbb{I}\{\widehat\theta_{t;i}\not=0\}\mathbb{I}\{i\in[n]\backslash\mathcal{S}_t\}
		<1\nonumber
	\end{align}
	\begin{sloppypar}
		\noindent where the second inequality follows from the assumption $\sum_{t=1}^T\sum_{i\in[p]}|\theta_{t;i}^*-\theta_{t-1;i}^*|^q\leq D$. Moreover, the fourth inequality follows from $\supp(\thetatrue)\subseteq\supp(\thetaest)$. The last inequality follows from the assumption $0<\gamma<1/(1+D)$. This implies that $ \mathbb{I}\{\widehat\theta_{t;i}\not=0\}\mathbb{I}\{i\in[n]\backslash\mathcal{S}_t\} = 0$ for every $t$ and $i$, and hence, we have $\supp(\thetaest)\subseteq \supp(\thetatrue)$. 
	\end{sloppypar}
	
	Finally, we show that for sparsely-changing MRFs,~\ref{generalopt1} with temporal $\ell_0$-regularizer satisfies $\mathrm{supp}(\widehat{\theta}_t) = \mathrm{supp}({\theta}^\star_t)$ and $\mathrm{supp}(\widehat{\theta}_t-\widehat{\theta}_{t-1}) = \mathrm{supp}({\theta}^\star_t-{\theta}^\star_{t-1})$. An argument identical to $q\geq 1$ can be invoked to show that $\supp(\thetatrue)\subseteq\supp(\thetaest)$. Similarly, given any $i\in\mathcal{D}_t$, one can write
	\begin{align}
		\left|\widehat \theta_{t;i}-\widehat \theta_{t-1;i}\right| &= \left|\widehat \theta_{t;i}-\theta_{t;i}^*+\theta_{t;i}^*-\theta_{t-1;i}^*+\theta_{t-1;i}^*-\widehat \theta_{t-1;i}\right|\nonumber\\
		&\geq \left|\theta_{t;i}^*-\theta_{t-1;i}^*\right|-\left|\widehat \theta_{t;i}-\theta_{t;i}^*\right|-\left|\widehat \theta_{t-1;i}-\theta_{t-1;i}^*\right|\nonumber\\
		&\geq \left|\theta_{t;i}^*-\theta_{t-1;i}^*\right|-2\lambda_t-2\lambda_{t-1}\nonumber\\
		&>0,\nonumber
	\end{align} 
	where the last inequality is due to our assumption $2\lambda_t+2\lambda_{t-1}\leq \min_{i\in\mathcal{D}_t}|\theta_{t;i}^\star-\theta_{t-1;i}^\star|$. This implies that $\mathrm{supp}({\theta}^\star_t-{\theta}^\star_{t-1})\subseteq \mathrm{supp}(\widehat{\theta}_t-\widehat{\theta}_{t-1})$. On the other hand, due to the optimality of $\{\thetaest\}_{t=0}^T$ and the feasibility of $\{\thetatrue\}_{t=0}^T$, we have 
	\begin{align}
		&(1-\gamma)\sum_{t=0}^T\|\widehat\theta_t\|_0+\gamma\sum_{t=1}^T\|\widehat\theta_t-\widehat\theta_{t-1}\|_0\leq (1-\gamma)\sum_{t=0}^T\|\theta_t^*\|_0+\gamma\sum_{t=1}^T\|\theta_t^*-\theta_{t-1}^*\|_0\nonumber\\
		\implies&(1\!-\!\gamma)\sum_{t=0}^T\!\left(\sum_{i\in[n]\backslash\mathcal{S}_t}\mathbb{I}\{\widehat\theta_{t;i}\not=0\}\!+\!\sum_{i\in\mathcal{S}_t}\mathbb{I}\{\widehat\theta_{t;i}\not=0\}\right)\nonumber\\
		&\!+\!\gamma\sum_{t=1}^T\left(\sum_{i\in[n]\backslash\mathcal{D}_t}\!\mathbb{I}\{\widehat\theta_{t;i}-\widehat\theta_{t-1;i}\not=0\}\!+\!\sum_{i\in\mathcal{D}_t}\!\mathbb{I}\{\widehat\theta_{t;i}-\widehat\theta_{t-1;i}\not=0\}\right)\nonumber\\
		& \leq (1-\gamma)\sum_{t=0}^T\sum_{i\in\mathcal{S}_t}\mathbb{I}\{\theta^\star_{t;i}\not=0\}+\gamma\sum_{t=1}^T\sum_{i\in\mathcal{D}_t}\mathbb{I}\{\theta^\star_{t;i}-\theta^\star_{t-1;i}\not=0\}\nonumber\\
		\implies & (1-\gamma)\sum_{t=0}^T\sum_{i\in[n]\backslash\mathcal{S}_t}\mathbb{I}\{\widehat\theta_{t;i}\not=0\}+\gamma\sum_{t=1}^T\sum_{i\in[n]\backslash\mathcal{D}_t}\mathbb{I}\{\widehat\theta_{t;i}-\widehat\theta_{t-1;i}\not=0\}\leq 0\nonumber
	\end{align}
	where the last inequality follows from $\mathrm{supp}(\theta^*_t)\subseteq\mathrm{supp}(\widehat{\theta}_t)$ and $\mathrm{supp}(\theta_{t}^*-\theta_{t-1}^*)\subseteq \mathrm{supp}(\widehat\theta_{t}-\widehat\theta_{t-1})$. Due to $0<\gamma<1$, the above inequality implies that $\widehat\theta_{t;i} = 0$ for every $t = 0,\dots, T$ and $i\in[n]\backslash\mathcal{S}_t$, and $\widehat\theta_{t;i}-\widehat\theta_{t-1;i} = 0$ for every $t=1,\dots, T$ and $i\in[n]\backslash\mathcal{D}_t$. Therefore, we have $\mathrm{supp}(\widehat{\theta}_t)\subseteq \mathrm{supp}(\theta^*_t)$ and $\mathrm{supp}(\widehat\theta_{t}-\widehat\theta_{t-1})\subseteq\mathrm{supp}(\theta_{t}^*-\theta_{t-1}^*)$. This completes the proof.$\hfill\square$

\section{Efficient Algorithm}\label{sec_algorithm}
In this section, we describe the proposed algorithm for solving~\ref{generalopt1}, assuming that the approximate backward mapping $\widetilde F^*(\widehat\mu_t)$ is known. In the next section, we study efficient ways for obtaining $\widetilde F^*(\widehat\mu_t)$. Recall that, due to the use of the infinity norm for the backward mapping deviation, problem \eqref{generaloptrepeated} decomposes into $p$ independent subproblems, one for each coordinate of $\theta_t$. Moreover, if $\gamma=1$, then problem \eqref{generaloptrepeated} can be solved trivially, by setting $\theta_{t;i}=0$ whenever that solution is feasible and setting $\theta_{t;i}=\left[F^*(\widehat\mu_t)\right]_i$ otherwise. Thus, we assume without loss of generality that $p=1$ (so we omit subscripts $i$) and $\gamma<1$, and focus on solving the problem 
\begin{equation}\label{eq:opt}
	\begin{aligned}
	 \min_{\{{\theta}_t\}_{t=0}^T}&\bar \gamma \sum_{t=0}^T\mathbb{I}[\theta_t \neq 0]+\sum_{t=1}^T\left|\theta_t-\theta_{t-1}\right|^q\\
		\mathrm{s.t.}\;&\ell_t\leq\theta_t\leq u_t\qquad \forall t=0,\dots,T,
	\end{aligned}
\end{equation}
where $\bar \gamma=\gamma/(1-\gamma)$, $\ell_t=-\lambda_t+\widetilde F^*(\widehat\mu_t)$ and $u_t=\lambda_t+\widetilde F^*(\widehat\mu_t)$.	First, in \S\ref{sec:fixedparam}, we discuss how to solve \eqref{eq:opt} for a given value of the parameter $\bar\gamma$. We note that \cite{fattahi2021scalable} proposed a method for solving~\eqref{eq:opt} with $q=0$ by casting it as a shortest path problem, which is a special case of our proposed dynamic programming approach. Then, in \S\ref{sec:parametric}, we give an algorithm to solve problem \eqref{eq:opt} for all values of $\bar \gamma$. Finally, in \S\ref{sec:subproblem} we discuss further refinements of the algorithm. Overall, the main result of this section is an algorithm that solves \eqref{eq:opt} in strongly polynomial time, summarized in Proposition~\ref{prop:complexity} below.
\begin{proposition}\label{prop:complexity}
	The entire solution path of \eqref{eq:opt} for all values of $0\leq \bar{\gamma}\leq \infty$ and any fixed $q\in \{0\}\cup [1,\infty)$ can be computed in $\mathcal{O}(T^3)$.
\end{proposition}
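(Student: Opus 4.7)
The plan is to build a dynamic program that, for each fixed $q$, recovers the entire solution path in $\bar\gamma$ in $\mathcal{O}(T^3)$ time by decoupling the combinatorial and continuous parts of \eqref{eq:opt}. Any optimal solution partitions the time indices $\{0, 1, \dots, T\}$ into alternating maximal zero-blocks (where $\theta_t = 0$) and nonzero-blocks (where $\theta_t \neq 0$), and the objective decomposes accordingly. Within a nonzero block $[a, b]$ the only contribution to the objective is the $\ell_0$ charge $\bar\gamma(b - a + 1)$ plus the temporal cost $\mathbb{I}[a > 0]\,|\theta_a|^q + \sum_{t=a+1}^{b}|\theta_t - \theta_{t-1}|^q + \mathbb{I}[b < T]\,|\theta_b|^q$, where the indicator terms arise as jumps to adjacent zero-blocks. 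Let $K^\circ(a, b)$ denote the minimum of this purely-temporal expression over $\theta_a, \dots, \theta_b$ satisfying the box constraints and $\theta_t \neq 0$; crucially, $K^\circ$ does not depend on $\bar\gamma$.

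The first technical step is to precompute the table $\{K^\circ(a, b)\}_{0 \le a \le b \le T}$ in $\mathcal{O}(T^3)$ time. By splitting on the sign of the block (all positive versus all negative), the nonconvex constraint $\theta_t \neq 0$ reduces to an open box constraint, and each block subproblem becomes a one-dimensional convex $\ell_q$-interpolation with up to two anchor terms. For $q = 0$ the minimizer is a single constant on the block, computable in $\mathcal{O}(b - a + 1)$ by intersecting the feasibility intervals; for $q \ge 1$ the subproblem is strictly convex and admits an $\mathcal{O}(b - a + 1)$ sweep (tridiagonal elimination for $q = 2$, and a KKT-based monotone sweep for general $q$), whose details are the content of \S\ref{sec:subproblem}. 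Summing $\sum_{a \le b}(b - a + 1) = \mathcal{O}(T^3)$ yields the precomputation bound.

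The second technical step lifts the fixed-$\bar\gamma$ DP into a parametric one. Since $\bar\gamma$ multiplies only the $\ell_0$ term, if $H(k)$ denotes the minimum temporal cost over feasible configurations with exactly $k$ nonzero entries, then
\[
\Phi(\bar\gamma) \;=\; \min_{k \in \{0, 1, \dots, T+1\}} \bigl[\bar\gamma\,k + H(k)\bigr],
\]
the lower envelope of at most $T + 2$ affine functions in $\bar\gamma$, hence concave and piecewise-linear with $\mathcal{O}(T)$ breakpoints. I would compute all $H(k)$ by an augmented DP whose state $V(t, k)$ is the minimum temporal cost for $\theta_0, \dots, \theta_t$ subject to $\theta_t = 0$ and exactly $k$ nonzeros among them, with base case $V(-1, 0) = 0$ and recursion
\[
V(t, k) \;=\; \min\Bigl\{V(t-1, k),\ \min_{0 \le a \le t-1} V(a-1,\, k - (t - a)) + K^\circ(a, t - 1)\Bigr\},
\]
together with a boundary analog that permits the terminal block to end at $t = T$ in the nonzero state (yielding $H(k)$). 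There are $\mathcal{O}(T^2)$ states with $\mathcal{O}(T)$ transitions each, so $\mathcal{O}(T^3)$ constant-time updates after precomputation; the lower envelope of the $T + 2$ resulting lines is assembled in $\mathcal{O}(T \log T)$, and optimal assignments at each breakpoint of $\Phi$ are reconstructed by standard backtracking through the DP.

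The main obstacle will be the block-subproblem step for general $q \in [1, \infty)$: engineering an $\mathcal{O}(b - a + 1)$ solver that handles both anchor terms $|\theta_a|^q,\,|\theta_b|^q$ and the box constraints simultaneously requires a structural argument (monotonicity of KKT multipliers along the block), since the prior-work cases $q = 0$ and $q = 2$ alone do not cover the proposition. A secondary piece of bookkeeping concerns the boundary cases ($a = 0$ or $b = T$, where no adjacent zero-block is present) and infeasibility of $\theta_t = 0$ when $0 \notin [\ell_t, u_t]$; both are handled via conventions on $V(-1, \cdot)$ and $+\infty$ penalties without affecting the asymptotic complexity.
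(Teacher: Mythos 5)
Your overall architecture is the same as the paper's: decompose the horizon into blocks separated by indices where $\theta_t=0$, precompute the purely temporal cost of each candidate block, run a cardinality-indexed dynamic program with $\mathcal{O}(T^2)$ states and $\mathcal{O}(T)$ transitions, and recover the path in $\bar\gamma$ as the lower envelope of the $T+2$ lines $\bar\gamma k + H(k)$ (the paper phrases this as solving the cardinality-constrained problem \eqref{eq:optCard} for every $k$ and selecting the best for each $\bar\gamma$, which is equivalent). That part of your argument is sound, including the $+\infty$ conventions for infeasible zero states.

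There are, however, two genuine gaps in the block-subproblem step. First, your claim that the constraint $\theta_t\neq 0$ on a nonzero block can be removed ``by splitting on the sign of the block (all positive versus all negative)'' is incorrect: entries within a single nonzero block need not share a sign, so this split does not reduce the feasible set to a box, and enumerating sign patterns is exponential. The paper sidesteps this entirely by defining the block cost $f(a,b)$ in \eqref{eq:defCont} over the \emph{closed} box (no nonzeroness constraint) and observing that the DP recursion remains exact, since any minimizer of $f(a,b)$ with an interior zero is dominated by the branch that splits at that index. Second, and more seriously, your $\mathcal{O}(b-a+1)$ ``KKT-based monotone sweep'' for general $q\in[1,\infty)$ is asserted rather than proved, and you yourself flag it as the main obstacle; this is precisely where the content of the proposition lies. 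The paper's actual resolution is structural: Lemma~\ref{lem:interpolation} shows that in an optimal solution each interior $\theta_t$ is either at a bound or the average of its neighbors, so an optimal block solution is piecewise-linear between bound-touching indices; the block costs are then computed by a \emph{second} dynamic program \eqref{eq:recursion2} over the last bound-touching index, costing $\mathcal{O}(T^3)$ in total for all pairs $(a,b)$ rather than $\mathcal{O}(b-a+1)$ per pair. Without the interpolation lemma or an equivalent substitute, your proof is incomplete for every $q\geq 1$ (the cases $q=0$ and the unconstrained $q=2$ tridiagonal solve do not cover the box-constrained general-$q$ case the proposition requires).
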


\subsection{Fixed parameter solution}\label{sec:fixedparam}

A key insight to solve \eqref{eq:opt} is that if $\theta_t=0$ for any index $t$, then \eqref{eq:opt} decomposes into two independent problems. This key observation gives rise to a dynamic programming approach for solving~\eqref{eq:opt}. Given $0\leq a< b\leq T+1$, define function $v(a,b)$ to be the best objective value of \eqref{eq:opt} from time period $a$ to $b-1$ assuming that $\theta_{a-1}=\theta_b=0$ (if such variables exist), that is,
\begin{equation}\label{eq_v}
\begin{aligned}
    v(a,b)=\min_{\{{\theta}_t\}_{t=a}^{b-1}}&\bar \gamma \sum_{t=a}^{b-1}\mathbb{I}[\theta_t \neq 0]\!+\!\!\sum_{t=a+1}^{b-1}\left|\theta_t\!-\!\theta_{t-1}\right|^q\!+\!\mathbb{I}[a>0](\theta_a-0)^q\!+\!\mathbb{I}[b<T+1]|\theta_{b-1}-0|^q\\
		\mathrm{s.t.}\;&\ell_t\leq\theta_t\leq u_t\qquad \forall t=a,\dots,b-1.
\end{aligned}
\end{equation} 
Clearly, the optimal objective value of problem \eqref{eq:opt} is $v(0,T+1)$.
Moreover, let $f(a,b)$ denote the best objective value of \eqref{eq:opt} from indexes $a$ to $b-1$ while ignoring the absolute regularizer terms, that is,
\begin{equation}\label{eq:defCont}
\begin{aligned}
	f(a,b)=\min_{\{{\theta}_t\}_{t=a}^{b-1}}&\sum_{t=a+1}^{b-1}\left|\theta_t-\theta_{t-1}\right|^q+\mathbb{I}[a>0]\left|\theta_a-0\right|^q+\mathbb{I}[b<T+1]\left|\theta_{b-1}-0\right|^q\\
	\mathrm{s.t.}\;&\ell_t\leq\theta_t\leq u_t\qquad \forall t=a,\dots,b-1.
\end{aligned} 
\end{equation}
By convention, we also let $f(a,a)=v(a,a)=0$ for all $a=0,\dots,T+1$. 
In \S\ref{sec:subproblem}, we discuss the complexity of solving \eqref{eq:defCont}.

Functions $v$ and $f$ are related through the following  recursion:
\begin{lemma}
    For any $0\leq a<b\leq T+1$, we have
    \begin{equation}\label{eq:recursion}
	v(a,b)=\min\Big\{\underbrace{ \min_{t\in\{a,\dots,b-1\}:\; 0\in [\ell_t,u_t]}v(a,t)+v(t+1,b)}_{\text{Cost of setting }\theta_t=0},\; \underbrace{f(a,b)+\bar \gamma(b-a)}_{\text{Cost of setting }\theta_t\neq 0,\;  a\leq t\leq b-1}\Big\}.
\end{equation}
\end{lemma}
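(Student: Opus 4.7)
The plan is to prove the recursion by case analysis on the optimal solution $\{\theta_t^\dagger\}_{t=a}^{b-1}$ of the problem defining $v(a,b)$, according to whether or not there exists an index $t$ in $\{a,\dots,b-1\}$ at which $\theta_t^\dagger=0$. I would show that the right-hand side of \eqref{eq:recursion} is both an upper and a lower bound for $v(a,b)$.

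For the $\leq$ direction, I would exhibit two feasible candidate solutions. First, taking an optimizer of \eqref{eq:defCont} produces a feasible solution for \eqref{eq_v} whose $b-a$ indicator terms contribute at most $\bar\gamma(b-a)$ and whose temporal terms equal $f(a,b)$; this gives $v(a,b)\leq f(a,b)+\bar\gamma(b-a)$. Second, for each $t\in\{a,\dots,b-1\}$ with $0\in[\ell_t,u_t]$, I would concatenate optimizers of the subproblems defining $v(a,t)$ and $v(t+1,b)$ with the insertion $\theta_t=0$ at position $t$. The value of this concatenation in \eqref{eq_v} equals $v(a,t)+v(t+1,b)$ because the boundary terms in the two subproblems line up exactly with the temporal contributions that straddle $t$, as discussed below.

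For the $\geq$ direction, I would take any optimal $\{\theta_t^\dagger\}$ and split into two cases. If $\theta_t^\dagger\neq 0$ for every $t\in\{a,\dots,b-1\}$, then $\{\theta_t^\dagger\}$ is feasible for \eqref{eq:defCont}, and its objective equals $\bar\gamma(b-a)$ plus the temporal sum, which is at least $\bar\gamma(b-a)+f(a,b)$. Otherwise, pick any $t$ with $\theta_t^\dagger=0$; feasibility of this index forces $0\in[\ell_t,u_t]$, and the restrictions of the optimizer to the blocks $\{a,\dots,t-1\}$ and $\{t+1,\dots,b-1\}$ are feasible for the subproblems defining $v(a,t)$ and $v(t+1,b)$ respectively, so their values lower bound $v(a,t)$ and $v(t+1,b)$. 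Summing the two contributions and noting that the index $t$ itself contributes $0$ to the indicator sum (since $\theta_t^\dagger=0$) gives $v(a,b)\geq v(a,t)+v(t+1,b)$.

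The main delicacy, and the step I would write out most carefully, is verifying the additive decomposition of the objective when we split at an index $t$ with $\theta_t=0$. Specifically, the term $|\theta_t-\theta_{t-1}|^q=|\theta_{t-1}-0|^q$ in the full problem is precisely the right-boundary term $\mathbb{I}[t<T+1]\,|\theta_{t-1}-0|^q$ inside $v(a,t)$, which is active because $t\leq T<T+1$; symmetrically, $|\theta_{t+1}-\theta_t|^q=|\theta_{t+1}-0|^q$ is precisely the left-boundary term $\mathbb{I}[t+1>0]\,(\theta_{t+1}-0)^q$ inside $v(t+1,b)$. All other temporal and indicator terms partition cleanly between the two subintervals, and the convention $f(a,a)=v(a,a)=0$ together with the $\mathbb{I}[a>0]$, $\mathbb{I}[b<T+1]$ guards in \eqref{eq_v} handles the edge cases $t=a$ and $t=b-1$. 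With these bookkeeping identities in hand, both inequalities follow and yield equality in \eqref{eq:recursion}.
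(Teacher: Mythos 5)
Your proof is correct and follows essentially the same route as the paper's: a case analysis on whether the optimizer of $v(a,b)$ vanishes at some index $t$, combined with the additive decomposition of the objective at a breakpoint with $\theta_t=0$. The paper states this more tersely, while you spell out both inequality directions and the boundary-term bookkeeping explicitly; this is a sound elaboration of the same argument rather than a different one.
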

\begin{proof}
The optimal solution corresponding to $v(a,b)$ either satisfies $\theta_t\not=0$ for all $a\leq t\leq b-1$, or $\theta_t=0$ for some $a\leq t\leq b-1$. If $\theta_t\not=0$ for all $a\leq t\leq b-1$, then we have $v(a,b) = f(a,b)+\bar \gamma \sum_{t=a}^{b-1}\mathbb{I}[\theta_t \neq 0] = f(a,b)+\bar \gamma(b-a)$. Otherwise, the problem~\eqref{eq_v} decomposes at the break-point $t$ with $\theta_t = 0$ which results in $v(a,b) = v(a,t)+v(t+1,b)$. Therefore, $v(a,b)$ can be obtained by taking the minimum of $f(a,b)+\bar \gamma(b-a)$ and $v(a,t)+v(t+1,b)$ for the best choice of the break-point $t$.
\end{proof}

In our subsequent arguments, $EO$ stands for ``evaluation oracle" and denotes the complexity of solving \eqref{eq:defCont}. Moreover, $EO_2$ stands for the complexity of solving \eqref{eq:defCont} for all combinations of parameters $0\leq a<b\leq T+1$. Clearly, $EO_2=\mathcal{O}(T^2 EO)$, by simply calling the evaluation oracle $T^2$ times, although more efficient methods may exist.
\begin{lemma}
    For a given value of $\bar{\gamma}$, \eqref{eq:opt} can be solved with complexity $\mathcal{O}\big(EO_2+T^3\big)$.
\end{lemma}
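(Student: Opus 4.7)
The plan is to prove the complexity bound via a standard bottom-up dynamic programming scheme based on the recursion~\eqref{eq:recursion}, with $v(0,T+1)$ being the target value. The key observation is that the recursion over $v$ involves only the values $f(a,b)$ as ``boundary data'' and values $v(a',b')$ on strict subintervals $[a',b']\subsetneq[a,b]$. So once we have tabulated all $f(a,b)$, computing all $v(a,b)$ reduces to a pure combinatorial dynamic program on intervals.

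First I would precompute the table $\{f(a,b):0\leq a<b\leq T+1\}$. By definition of $EO_2$, this step takes $\mathcal{O}(EO_2)$ time and produces $\mathcal{O}(T^2)$ stored values, each of which can be read in $\mathcal{O}(1)$. I would also precompute the indicator set $\mathcal{Z}=\{t:0\in[\ell_t,u_t]\}$ in $\mathcal{O}(T)$ time so that the inner minimization in~\eqref{eq:recursion} only ranges over $\mathcal{Z}\cap\{a,\dots,b-1\}$; if this intersection is empty, the first term in~\eqref{eq:recursion} is treated as $+\infty$ and the recursion still returns a valid value.

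Next I would fill in the table $\{v(a,b)\}$ in order of increasing interval length $b-a$, starting from the base cases $v(a,a)=0$ and $v(a,a+1)$, which is determined directly from the two alternatives in~\eqref{eq:recursion}. For an interval $[a,b]$ of length $b-a=\ell$, evaluating the recursion requires scanning at most $\ell$ candidate break-points $t$, each contributing an $\mathcal{O}(1)$ addition $v(a,t)+v(t+1,b)$, plus one comparison against $f(a,b)+\bar\gamma(b-a)$. Summing over all $\mathcal{O}(T^2)$ intervals gives a total of $\sum_{\ell=1}^{T+1}(T+2-\ell)\cdot\ell=\mathcal{O}(T^3)$ elementary operations. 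Reading out $v(0,T+1)$ then yields the optimal value of~\eqref{eq:opt}, and a standard predecessor-pointer bookkeeping (storing the argmin in~\eqref{eq:recursion} along with each $v(a,b)$) allows reconstructing the optimal $\{\theta_t\}_{t=0}^T$ in $\mathcal{O}(T)$ additional time, by recursively splitting at the recorded break-points and, on each ``indivisible'' block, reading off the minimizer of the associated subproblem~\eqref{eq:defCont}.

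Combining the two phases gives the announced $\mathcal{O}(EO_2+T^3)$ bound. The only real subtlety — and the place I would be most careful in a written proof — is verifying that the recursion~\eqref{eq:recursion} remains correct when one ``pieces together'' optimal sub-solutions of~\eqref{eq_v} across a break-point $\theta_t=0$: the boundary conventions baked into the definition of $v(a,b)$ (namely, the implicit zero boundary values $\theta_{a-1}=\theta_b=0$ and the indicator terms in~\eqref{eq_v} that account for them) are precisely what make the pieces $v(a,t)$ and $v(t+1,b)$ add up to the true cost of the concatenated schedule, with no double-counting of the regularizer cost at the break-point itself. Once this consistency is in place, the DP bound is immediate and no further obstacle arises.
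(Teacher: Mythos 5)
Your proposal is correct and follows essentially the same route as the paper: precompute the table of $f(a,b)$ values in $\mathcal{O}(EO_2)$, then fill the $\mathcal{O}(T^2)$-entry table of $v(a,b)$ by increasing interval length using the recursion~\eqref{eq:recursion}, with $\mathcal{O}(T)$ comparisons per entry, for a total of $\mathcal{O}(EO_2+T^3)$. The extra details you supply (the set $\mathcal{Z}$ of indices with $0\in[\ell_t,u_t]$, predecessor pointers for reconstructing the minimizer, and the remark that the boundary conventions in~\eqref{eq_v} prevent double-counting at a break-point) are consistent with, and slightly more explicit than, the paper's argument.
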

\begin{proof}
From the recursion \eqref{eq:recursion}, it follows that if all values of $v(a,t)$ and $v(t+1,b)$ for all $t\in \{a,\dots,b-1\}$ are available, and $f(a,b)$ is available as well, then computing $v(a,b)$ reduces to performing $\mathcal{O}(T)$ comparisons. Since $v$ needs to be computed $\mathcal{O}(T^2)$ for all $a<b$, to ensure $v(a,t)$ and $v(t+1,b)$ are available, we obtain the complexity of $\mathcal{O}(T^3)$. The term $\mathcal{O}(EO_2)$ corresponds to computing function $f$ as well. 
\end{proof}

\subsection{Parametric scheme for solution path}\label{sec:parametric}

We now show that \eqref{eq:opt} can be solved \emph{for all values of $\bar \gamma$} with the same theoretical complexity as the dynamic programming algorithm discussed in \S\ref{sec:fixedparam}. {Observe that the optimal solution $\{\hat \theta_t\}_{t=0}^T$ of \eqref{eq:opt} is also optimal for the optimization problem 
\begin{equation}\label{eq:optCard}
	\begin{aligned}
		\min_{\{{\theta}_t\}_{t=0}^T}&\sum_{t=1}^T\left|\theta_t-\theta_{t-1}\right|^q\\
		\mathrm{s.t.}\;&\sum_{t=0}^T\mathbb{I}[\theta_t \neq 0]\leq k\\
		&\ell_t\leq\theta_t\leq u_t\qquad \forall t=0,\dots,T
	\end{aligned}
\end{equation}
with $k=\sum_{t=0}^T\mathbb{I}[\hat \theta_t \neq 0]$. Thus, to solve \eqref{eq:opt} for all values of $\bar{\gamma}$, one can instead solve \eqref{eq:optCard} for all values of $k\in \{0,\dots,T\}$ and select the solution with best objective value. Observe that this approach produces optimal solutions for all values of the regularization parameter $\bar \gamma$. Also note that since problems \eqref{eq:opt} and \eqref{eq:optCard} are non-convex, the regularized and constrained problems are not equivalent: some optimal solutions of \eqref{eq:optCard} for a given value of $k$ may not be optimal for \eqref{eq:opt} for any value of $\bar \gamma$.}

 Given $0\leq b\leq T+1$ and $k\in \mathbb{Z}$, define function $\nu(b,k)$ to be the best objective value of \eqref{eq:optCard} from indexes $0$ to $b-1$ assuming $\theta_b=0$ (if $b\leq T$), that is,
\begin{align*}
	\nu(b,k)=\min_{\{{\theta}_t\}_{t=0}^{b-1}}&\sum_{t=1}^{b-1}\left|\theta_t-\theta_{t-1}\right|^q+\mathbb{I}[b<T+1]\left|\theta_{b-1}-0\right|^q\\
	\mathrm{s.t.}\;&\sum_{t=0}^{b-1}\mathbb{I}[\theta_t \neq 0]\leq k\\
	&\ell_t\leq\theta_t\leq u_t\qquad \forall t=a,\dots,b-1,
\end{align*} 
where we adopt the convention that 	$\nu(b,k)=\infty$ if the optimization is not feasible (e.g., if $k<0$), and $\nu(0,k)=0$ for all $k\in \mathbb{Z}_+$. Our goal is to compute $\nu(T+1,k)$ for all values of $k\in \{0,\dots,T+1\}$. Note that for all $k\geq b$, we have that $\nu(b,k)=f(0,b)$, where $f$ is given by \eqref{eq:defCont}. For $k< b$, functions $\nu$ and $f$ are related through the following recursion:
\begin{lemma}
	For any $0\leq k<b\leq T+1$, we have
\begin{equation}\label{eq:recursionParam}
	\nu(b,k)=\min_{t\in\{0,\dots,b-1\}:\; 0\in [\ell_t,u_t]} \nu(t,k-(b-t-1))+f(t+1,b).
\end{equation}
\end{lemma}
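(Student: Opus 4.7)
The plan is to prove this recursion by the standard ``last-zero'' decomposition argument. The key observation is that whenever $k<b$, any feasible solution $\{\theta_t\}_{t=0}^{b-1}$ for the problem defining $\nu(b,k)$ must contain at least one coordinate with $\theta_t = 0$: otherwise the cardinality $\sum_{t=0}^{b-1}\mathbb{I}[\theta_t\ne 0]$ equals $b>k$. Moreover, any index $t$ with $\theta_t=0$ necessarily satisfies $0\in[\ell_t,u_t]$ by feasibility, which explains the restriction on the minimization in \eqref{eq:recursionParam}.

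To show the ``$\geq$'' direction, I would fix an optimal $\{\theta_t\}_{t=0}^{b-1}$ attaining $\nu(b,k)$ and let $t^\star$ be the \emph{largest} index in $\{0,\dots,b-1\}$ with $\theta_{t^\star}=0$. Since $\theta_{t^\star}=\theta_b=0$ and all $\theta_{t^\star+1},\dots,\theta_{b-1}$ are nonzero, the objective breaks into two independent pieces: the sum $\sum_{t=1}^{t^\star-1}|\theta_t-\theta_{t-1}|^q$ (plus the boundary term $\mathbb{I}[b'<T+1]|\theta_{t^\star-1}|^q$ induced by $\theta_{t^\star}=0$, when applicable), and the sum $\sum_{t=t^\star+2}^{b-1}|\theta_t-\theta_{t-1}|^q+|\theta_{t^\star+1}|^q+\mathbb{I}[b<T+1]|\theta_{b-1}|^q$. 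The first piece is feasible for the problem defining $\nu(t^\star, k-(b-t^\star-1))$, since exactly $b-t^\star-1$ units of the cardinality budget are consumed by the nonzero entries on the right. The second piece is feasible for the problem defining $f(t^\star+1,b)$; here the crucial point is that the boundary indicator $\mathbb{I}[a>0]|\theta_a-0|^q$ in the definition \eqref{eq:defCont} precisely encodes the term $|\theta_{t^\star+1}-\theta_{t^\star}|^q=|\theta_{t^\star+1}|^q$ coming from the zero at $t^\star$. Hence $\nu(b,k)\geq \nu(t^\star,k-(b-t^\star-1))+f(t^\star+1,b)$, and taking the minimum over all valid $t$ yields the desired inequality.

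For the reverse ``$\leq$'' direction, I would argue by construction. For any $t\in\{0,\dots,b-1\}$ with $0\in[\ell_t,u_t]$, take an optimal solution $\{\theta^L_s\}_{s=0}^{t-1}$ of the problem defining $\nu(t,k-(b-t-1))$ (which implicitly sets $\theta_t=0$) and an optimal solution $\{\theta^R_s\}_{s=t+1}^{b-1}$ of the problem defining $f(t+1,b)$. Concatenate them by setting $\theta_t=0$: the box constraints hold coordinate-wise, the cardinality budget is respected since $(k-(b-t-1))+(b-t-1)=k$, and the objective of this composite solution equals $\nu(t,k-(b-t-1))+f(t+1,b)$ — again using that the $\mathbb{I}[a>0]|\theta_a|^q$ boundary in $f$ matches the jump from $\theta_t=0$ to $\theta_{t+1}=\theta^R_{t+1}$. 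Since this feasible solution has the stated cost, $\nu(b,k)$ is bounded above by it, and minimizing over $t$ completes the equality.

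The main obstacle I anticipate is not conceptual but bookkeeping: one must carefully verify that the boundary indicators in the definitions of $\nu$ and $f$ assemble correctly so that the squared-difference / $q$-th-power-difference terms crossing the decomposition point $t^\star$ are counted exactly once, with no double counting and none omitted. In particular, both definitions have indicator-driven ``endpoint'' penalties, and one must confirm these line up with the natural constraints $\theta_{t^\star}=0$ and $\theta_b=0$; otherwise the two directions of the inequality would not match.
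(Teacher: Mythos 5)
Your proposal is correct and follows essentially the same route as the paper's proof: identify the largest index $\bar t$ with $\theta_{\bar t}=0$ (which must exist since $k<b$), observe that the problem decomposes at that breakpoint into a $\nu$-subproblem with reduced budget $k-(b-\bar t-1)$ on the left and an unconstrained $f$-subproblem on the right, and minimize over the choice of breakpoint. The paper states this in three lines; your version simply makes explicit the two inequality directions and the bookkeeping of the boundary indicator terms, which the paper leaves implicit.
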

\begin{proof}
	The optimal solution corresponding to $\nu(b,k)$ satisfies $\theta_t=0$ for some $0\leq t\leq b-1$ (due to the assumption that $k<b$). Letting $\bar t$ be the largest such index, and noting that the problem decomposes at the breakpoint $\bar t$ with $\theta_{\bar t}=0$, it follows that $$\nu(b,k) = \underbrace{\nu(\bar t,k-(b-\bar t-1))}_{\text{Cost from 0 to $\bar t-1$ with remaining budget}}+\underbrace{f(\bar t+1,b)}_{\text{Cost from $\bar t+1$ to $b-1$ with $\theta_t\neq 0$.}}.$$
	Therefore, $\nu(b,k)$ can be obtained by selecting the break-point $\bar t$ with least cost.
\end{proof}

\begin{lemma}
    The problem~\eqref{eq:optCard} can be solved for all values of $\bar{\gamma}$ with complexity $\mathcal{O}(T^3+EO_2)$, where $EO_2$ is the complexity of solving \eqref{eq:defCont} for all combinations of parameters $a<b$. 
\end{lemma}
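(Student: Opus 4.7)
The plan is to exploit the recursion \eqref{eq:recursionParam} to fill in a two-dimensional table of values $\nu(b,k)$ by dynamic programming, and then read off the entire solution path from the last row $\nu(T+1,\cdot)$. First, I would pre-compute $f(a,b)$ for every pair $0\leq a<b\leq T+1$; by definition of $EO_2$, this costs $\mathcal{O}(EO_2)$ in total. These values will be looked up as constants in the subsequent recursion.

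Next, I would populate the table $\nu(b,k)$ for $b\in\{0,1,\dots,T+1\}$ and $k\in\{0,1,\dots,T+1\}$, initializing $\nu(0,k)=0$ for all $k\geq 0$ and $\nu(b,k)=+\infty$ for $k<0$, and processing entries in order of increasing $b$ (so that whenever the right-hand side of \eqref{eq:recursionParam} is evaluated, the needed entries $\nu(t,\cdot)$ with $t<b$ are already available). For entries with $k\geq b$, we set $\nu(b,k)=f(0,b)$ as noted in the text. For entries with $k<b$, the recursion is a minimum over at most $b\leq T+1$ candidate break-points $t\in\{0,\dots,b-1\}$ with $0\in[\ell_t,u_t]$, and each candidate requires one lookup of $\nu(t,k-(b-t-1))$ and one lookup of $f(t+1,b)$, i.e.\ constant work. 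Thus each of the $\mathcal{O}(T^2)$ table entries is computed in $\mathcal{O}(T)$ time, giving $\mathcal{O}(T^3)$ work for the dynamic program on top of the $\mathcal{O}(EO_2)$ spent on $f$.

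Finally, to recover the solution path for all $\bar\gamma\geq 0$, I would output $\nu(T+1,k)$ for every $k\in\{0,\dots,T+1\}$. As already discussed in the excerpt, any optimal solution of \eqref{eq:opt} at a given $\bar\gamma$ is optimal for the cardinality-constrained problem \eqref{eq:optCard} with $k$ equal to its own support size, so scanning over $k$ recovers the entire parametric family of optimizers. Selecting, for each $\bar\gamma$, the index $k$ that minimizes $\bar\gamma\cdot k+\nu(T+1,k)$ gives the corresponding objective of \eqref{eq:opt}; this post-processing costs only $\mathcal{O}(T)$ per query and $\mathcal{O}(T^2)$ to list all breakpoints of the piecewise-linear (in $\bar\gamma$) lower envelope.

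The only real subtlety is verifying that the recursion \eqref{eq:recursionParam} remains correct when iterated over both arguments simultaneously, in particular that the budget bookkeeping $k\mapsto k-(b-t-1)$ combined with the convention $\nu(b,k)=+\infty$ for infeasible (negative) budgets does not accidentally exclude any optimizer; this follows because, as in the proof of \eqref{eq:recursionParam}, we always decompose at the \emph{largest} index $\bar t<b$ with $\theta_{\bar t}=0$, so the $b-\bar t-1$ entries between $\bar t$ and $b-1$ are all nonzero and consume exactly $b-\bar t-1$ units of the cardinality budget. Once this is in hand, the complexity bound $\mathcal{O}(T^3+EO_2)$ follows immediately from the accounting above.
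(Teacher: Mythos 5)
Your proposal is correct and follows essentially the same route as the paper: precompute $f(a,b)$ for all pairs at cost $\mathcal{O}(EO_2)$, then fill the $\mathcal{O}(T^2)$-entry table $\nu(b,k)$ via the recursion \eqref{eq:recursionParam} with $\mathcal{O}(T)$ comparisons per entry, for $\mathcal{O}(T^3)$ total. The extra detail you give on the budget bookkeeping and on extracting the lower envelope over $\bar\gamma$ is consistent with the paper's (terser) argument.
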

\begin{proof}
Computing all values of $\nu(b,k)$ amounts to computing $f(a,b)$ for all $\mathcal{O}(T^2)$ combinations of parameters $a< b$ and doing $\mathcal{O}(T^3)$ comparisons, corresponding to $\mathcal{O}(T^2)$ choices of $b$ and $k$, with each computation of $v(b,k)$ requiring $\mathcal{O}(T)$ comparisons.
\end{proof}

Observe that if solving \eqref{eq:defCont} for all combinations of $a<b$ can be done in $\mathcal{O}(T^3)$, then the overall complexity of the method is cubic in $T$. We now show that this is indeed the case.

\subsection{Solving the subproblems}\label{sec:subproblem}

Solving \eqref{eq:opt}, either for a fixed value of $\bar \gamma$ or parametrically for all values of the regularization parameter, requires solving problem \eqref{eq:defCont} for all $0\leq a< b\leq T+1$. We now discuss how to accomplish this task efficiently, depending on the value of $q$. In particular, we show that \eqref{eq:defCont} for all $0\leq a< b\leq T+1$ can be solved in $O(T^2)$ for $q=0$, and in $O(T^3)$ for any $q\geq 1$.

\subsubsection{Case $q=0$}

We now provide an efficient algorithm (Algorithm~\ref{alg_greedy}) for solving \eqref{eq:defCont} for the case $q=0$, which we restate for convenience: \begin{equation}\label{eq:defCont0}
	\begin{aligned}
		f(a,b)=\min_{\{{\theta}_t\}_{t=a}^{b-1}}&\sum_{t=a+1}^{b-1}\mathbb{I}[\theta_t\neq\theta_{t-1}]+\mathbb{I}[a>0]\mathbb{I}[\theta_a\neq 0]+\mathbb{I}[b<T+1]\mathbb{I}[\theta_{b-1}\neq 0]\\
		\mathrm{s.t.}\;&\ell_t\leq\theta_t\leq u_t\qquad \forall t=a,\dots,b-1.
	\end{aligned} 
\end{equation}
For simplicity, we can introduce a dummy variable $\theta_{a-1}$ with bounds $\ell_{a-1}=u_{a-1}=0$ if $a>0$, and bounds $-\ell_{a-1}=u_{a-1}=\infty$ if $a=0$. Similarly, we can introduce a variable $\theta_{b}$ with bounds $\ell_{b}=u_{b}=0$ if $b<T+1$, and bounds $-\ell_{a-1}=u_{a-1}=\infty$ if $b=T+1$. With these additional variables, we can rewrite \eqref{eq:defCont0} as 
\begin{equation}\label{eq:defCont01}
	\begin{aligned}
		f(a,b)=\min_{\{{\theta}_t\}_{t=a-1}^{b}}&\sum_{t=a}^{b}\mathbb{I}[\theta_t\neq\theta_{t-1}]\\
		\mathrm{s.t.}\;&\ell_t\leq\theta_t\leq u_t\qquad \forall t=a-1,\dots,b.
	\end{aligned} 
\end{equation}
The next lemma provides a method to compute lower bounds on the objective value of \eqref{eq:defCont01}.
\begin{lemma}\label{lem:lb0}
	Given any indexes $a\leq \tau_1< \tau_2\leq b$, if $\max_{\tau_1\leq t\leq \tau_2}\ell_t>\min_{\tau_1\leq t\leq \tau_2}u_t$, then 
	\begin{equation}\label{eq:LB}\sum_{t=\tau_1+1}^{\tau_2}\mathbb{I}[\theta_t\neq\theta_{t-1}]\geq 1, \end{equation}
	for any feasible solution $\theta_t$ of \eqref{eq:defCont01}.
\end{lemma}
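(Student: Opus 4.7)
The plan is to proceed by contradiction, showing that the hypothesis of the lemma forces at least two of the variables $\theta_{\tau_1}, \theta_{\tau_1+1}, \dots, \theta_{\tau_2}$ to take distinct values, which then immediately implies that at least one ``jump'' $\theta_t \neq \theta_{t-1}$ must occur within the index window.

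First I would unpack the hypothesis. Because $\max_{\tau_1\leq t\leq \tau_2}\ell_t > \min_{\tau_1\leq t\leq \tau_2} u_t$, there must exist indices $i,j \in \{\tau_1,\dots,\tau_2\}$ achieving these extremes, so that $\ell_i > u_j$. Note that $i$ and $j$ are necessarily distinct, since any single index $t$ satisfies $\ell_t \leq u_t$ by the feasibility of \eqref{eq:defCont01}. Using feasibility of the solution, $\theta_i \geq \ell_i > u_j \geq \theta_j$, hence $\theta_i \neq \theta_j$ (in fact $\theta_i > \theta_j$).

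Next, I would suppose for contradiction that the sum in \eqref{eq:LB} equals zero, meaning $\theta_t = \theta_{t-1}$ for every $t \in \{\tau_1+1,\dots,\tau_2\}$. Chaining these equalities gives $\theta_{\tau_1} = \theta_{\tau_1+1} = \cdots = \theta_{\tau_2}$, so in particular $\theta_i = \theta_j$, contradicting the strict inequality derived above. Therefore at least one index $t \in \{\tau_1+1,\dots,\tau_2\}$ must satisfy $\theta_t \neq \theta_{t-1}$, yielding $\sum_{t=\tau_1+1}^{\tau_2}\mathbb{I}[\theta_t\neq\theta_{t-1}]\geq 1$ as claimed.

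There is no substantive obstacle here; the argument is a one-line pigeonhole/telescoping observation once the hypothesis is translated into a feasibility-based inequality $\theta_i > \theta_j$. The only subtlety worth stating cleanly in the writeup is that $i \neq j$ (otherwise the hypothesis $\ell_i > u_j$ would contradict the feasibility of the box constraints themselves), which ensures the chain of equalities in the contradiction step actually traverses a nonempty set of indices within $\{\tau_1+1,\dots,\tau_2\}$.
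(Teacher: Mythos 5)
Your proof is correct and follows essentially the same route as the paper's: the paper observes that the left-hand side of \eqref{eq:LB} vanishes if and only if $\theta_{\tau_1}=\cdots=\theta_{\tau_2}$, and that the hypothesis on the bounds makes such a constant solution infeasible. You merely spell out the infeasibility step by exhibiting the indices $i,j$ with $\theta_i\geq \ell_i>u_j\geq\theta_j$, which is a fine (and slightly more explicit) way to write the same argument.
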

\begin{proof}
	The left hand side of \eqref{eq:LB} is zero if and only if $\theta_{\tau_1}=\theta_{\tau_1+1}=\dots=\theta_{\tau_2}$. However, the condition $\max_{\tau_1\leq t\leq \tau_2}\ell_t>\min_{\tau_1\leq t\leq \tau_2}u_t$ ensures that such solutions are not feasible.
\end{proof}

Algorithm~\ref{alg_greedy}  below solves \eqref{eq:defCont01} to optimality. 
At a high level, starting from time period $\tau_1=a$, the algorithm greedily finds the largest time period $\tau_2$ such that setting $\theta_{\tau_1}=\theta_{\tau_1+1}=\dots=\theta_{\tau_2}$ is feasible. In the algorithm, we maintain a set $\Gamma$ corresponding to the first and last time periods, as well as breakpoints $\tau$ such that $\theta_{\tau_1}\neq\theta_{\tau_1-1}$. We use $\texttt{last}(\Gamma)$ to denote the last element added to set $\Gamma$. 
\begin{algorithm}
	\caption{$\ell_0$ greedy}
	\label{alg_greedy}
	\begin{algorithmic}[1]
		 \State{{\bf Input:} $\hat \theta\in \R^{b-a+2}$: array to store the optimal solution}
		\State{{\bf Output:} The optimal objective value  $f(a,b)$}
		\vspace{2mm}
		\State{$\bar f\leftarrow 0$}
		\State{$\bar \ell\leftarrow\ell_{a-1}$, $\bar u\leftarrow u_{a-1}$} \Comment{Current bounds on constant interval}
		\State{$\Gamma\leftarrow\{a-1\}$} \Comment{Set of breakpoints}\label{line:setGamma}
		\For{$t=a,\dots,b$}
		\If{$\max\{\ell_{t},\bar \ell\}<\min\{u_{t},\bar u\}$}  \label{line:if}\Comment{Possible to extend constant interval}
	\State{$\bar \ell\leftarrow\max\{\ell_{t},\bar \ell\}$, $\bar u\leftarrow \min\{u_{t},\bar u\}$}
		\Else \Comment{Breakpoint found}
		\State $\bar f\leftarrow \bar f+1$ \Comment{Updates objective}
		\State $\hat \theta_\tau \leftarrow (\bar \ell+\bar u)/2\qquad \forall \tau=\texttt{last}(\Gamma),\dots,t-1$ \Comment{Updates solution}
		\State{$\Gamma\leftarrow\Gamma\cup \{t\}$}\label{line:increaseGamma}
		\State{$\bar \ell\leftarrow\ell_{t}$, $\bar u\leftarrow u_{t}$} \Comment{Reset bounds on interval}
		\EndIf
		\EndFor
		\If{$\texttt{last}(\Gamma)\neq b$} \Comment{Either $b=T+1$ or $0\in [\bar \ell,\bar u]$}
		\State $\hat \theta_\tau \leftarrow (\bar \ell+\bar u)/2\qquad \forall \tau=\texttt{last}(\Gamma),\dots,b$
		\State{$\Gamma\leftarrow\Gamma\cup \{b\}$}\label{line:conditionalIncrease}
		\EndIf
	
		\State \textbf{Return }{$\bar f$;}
	\end{algorithmic}
\end{algorithm}

\begin{proposition}\label{prop_beta1}
Algorithm~\ref{alg_greedy} runs in $\mathcal{O}(b-a)$ time and memory, and returns a solution $\{\hat \theta_t\}_{a-1}^{b}$ that is optimal for \eqref{eq:defCont01}. Moreover, given any $b_0<b$, the truncated solution $\{\hat \theta_t\}_{a-1}^{b_0}$ is optimal for the problem associated with function $f(a,b_0)$.
\end{proposition}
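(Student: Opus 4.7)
My plan is to prove the three assertions of the proposition separately: (i) the $\mathcal{O}(b-a)$ runtime and memory bound, (ii) optimality of the full returned solution for $f(a,b)$, and (iii) optimality of every prefix for $f(a,b_0)$.

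For the complexity bound I would observe that the main loop executes $b-a+1$ times, and every operation inside an iteration is $\mathcal{O}(1)$ except for the assignment loop that writes $\hat\theta_\tau=(\bar\ell+\bar u)/2$ over a contiguous block of indices. Across the whole execution these blocks partition $\{a-1,\dots,b\}$, so by amortization the total assignment cost is $\mathcal{O}(b-a)$. Memory is dominated by $\hat\theta$, which has $\mathcal{O}(b-a)$ entries; $\Gamma$ need only store its last element (used for the next assignment block), so it contributes $\mathcal{O}(1)$ extra memory.

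For the optimality of the returned solution I would split the argument into feasibility and a matching lower bound. Feasibility follows because the running bounds $(\bar\ell,\bar u)$ always coincide with $\max_s\ell_s$ and $\min_s u_s$ over the current maximal constant interval, and the "if" branch is entered only when $\bar\ell\leq\bar u$, so the midpoint $(\bar\ell+\bar u)/2$ lies in every $[\ell_\tau,u_\tau]$ covered by the segment. For the lower bound, let $a-1=:t_0<t_1<\dots<t_k$ be the breakpoints successively added to $\Gamma$ inside the loop. Each transition to the "else" branch at $t=t_i$ asserts precisely that $\max_{t_{i-1}\leq s\leq t_i}\ell_s>\min_{t_{i-1}\leq s\leq t_i}u_s$, so Lemma~\ref{lem:lb0} forces any feasible solution of \eqref{eq:defCont01} to incur at least one breakpoint in the disjoint interval $[t_{i-1}+1,t_i]$. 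Summing over $i$ yields a lower bound of $k$ on the objective, matching the value $\bar f=k$ returned by Algorithm~\ref{alg_greedy}.

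For the truncation claim the key observation is that Algorithm~\ref{alg_greedy} is \emph{online}: the state $(\bar\ell,\bar u,\Gamma,\bar f)$ after iteration $t$ depends only on $\{(\ell_s,u_s)\}_{s=a-1}^{t}$. Hence the breakpoints produced in iterations $a,\dots,b_0$ and the assignments to $\hat\theta_\tau$ for $\tau\in\{a-1,\dots,b_0\}$ coincide with those that would arise from running the algorithm directly with target $b_0$, up to the treatment of the right boundary condition. I would then apply the same Lemma~\ref{lem:lb0}-based lower bound, restricted to indices $\leq b_0$, to conclude that the truncated solution is optimal for $f(a,b_0)$.

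The main obstacle I anticipate is reconciling the boundary discrepancy in the truncation step: in the full run, index $b_0$ uses the original bounds $(\ell_{b_0},u_{b_0})$, whereas in $f(a,b_0)$ the index $b_0$ is a dummy forced to $0$ when $b_0<T+1$. I plan to handle this via a short case distinction on whether $0\in[\bar\ell,\bar u]$ for the terminal segment of the truncation that covers $b_0$: if yes, the midpoint assignment can be freely replaced by $0$ (the assignment value is underdetermined within $[\bar\ell,\bar u]$) and no extra breakpoint is incurred; if no, exactly one additional breakpoint at $b_0$ becomes unavoidable, and Lemma~\ref{lem:lb0} applied to the appropriate subinterval ending at $b_0$ shows that this breakpoint is also necessary for any feasible solution of $f(a,b_0)$, matching the algorithm's count.
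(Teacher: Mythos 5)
Your proposal is correct and follows essentially the same route as the paper's proof: feasibility of the greedy constant-segment solution, a matching lower bound obtained by applying Lemma~\ref{lem:lb0} to the disjoint intervals between consecutive breakpoints, and the online/greedy character of Algorithm~\ref{alg_greedy} for the truncation claim. Your explicit case analysis of the boundary discrepancy at $b_0$ (where the dummy variable is forced to zero) is somewhat more careful than the paper's one-line appeal to greediness, but it is the same underlying argument.
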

\begin{proof}
	The runtime readily follows by counting the number of iterations in Algorithm~\ref{alg_greedy}. We first prove the first sentence of the proposition. Let $a-1=\tau_0<\tau_1<\tau_2<\ldots<\tau_m=b$ be the elements of the set $\Gamma$ from Algorithm~\ref{alg_greedy}. By construction, $\max_{\tau_i\leq t\leq \tau_{i+1}}\ell_t>\min_{\tau_i\leq t\leq \tau_{i+1}}u_t$ for all $i=0,\dots,m-2$ (since elements are added to $\Gamma$ in line~\ref{line:increaseGamma} only when the condition in line~\ref{line:if} fails). It follows from Lemma~\ref{lem:lb0} that 
	$$m-1\leq\sum_{i=0}^{m-2}\sum_{t=\tau_i+1}^{\tau_{i+1}}\mathbb{I}[\theta_t\neq \theta_{t-1}]=\sum_{t=a-1}^{\tau_{m-1}}\mathbb{I}[\theta_t\neq \theta_{t-1}].$$
	Moreover, if $\ell_b=u_b=0$ 
 and $0\not\in [\max_{\tau_{m-1}\leq t\leq b-1}\ell_t,\min_{\tau_{m-1}\leq t\leq b-1}u_t]$, 
  then 
	$$1\leq \sum_{t=\tau_{m-1}+1}^{b}\mathbb{I}[\theta_t\neq \theta_{t-1}]$$
	since $\theta_t$ needs to be nonzero for some $\tau_{m-1}\leq t\leq \tau_{b-1}$ and $\theta_b=0$. Therefore, we find the lower bound on the optimal objective value of \eqref{eq:defCont01} given by 
	$$m-1+\mathbb{I}\left[\text{$\ell_b=u_b=0$ and $0\not\in [\max_{\tau_{m-1}\leq t\leq b-1}\ell_t,\min_{\tau_{m-1}\leq t\leq b-1}u_t]$}\right]\leq\sum_{t=a-1}^{b}\mathbb{I}[\theta_t\neq \theta_{t-1}].$$
	We now check the objective value of Algorithm~\ref{alg_greedy}, given by $\bar f$. Note that set $\bar \Gamma$ contains $m+1$ elements. The first element added (line~\ref{line:setGamma}) does not increase the objective value. The next $m-1$ elements added (line~\ref{line:increaseGamma}) increase the objective function by one. Finally, the last element added increases the objective value by one unless $b=T+1$ (in which case $-\ell_b=u_b=\infty$) or $0\in [\bar \ell,\bar u]$ (line~\ref{line:conditionalIncrease}). Thus, the value $\bar f$ returned by Algorithm~\ref{alg_greedy} matches the lower, and is thus optimal.
	
	The second sentence of the proposition follows since Algorithm~\ref{alg_greedy} is greedy: the first $b_0$ steps used when computed $f(a,b)$ are exactly the same as those used to compute $f(a,b_0)$. 
\end{proof}
 From Proposition~\ref{prop_beta1} we see that solving \eqref{eq:defCont0} for a fixed $a$ and all $b>a$ can be done in linear time. It follows that solving \eqref{eq:defCont0} for all $a<b$ is possible in $\mathcal{O}(T^2)$ time. 

\subsubsection{Case $q\geq 1$}
{
We now discuss how to solve \eqref{eq:defCont} for $q\geq 1$.
We assume for simplicity that $0<a<b<T+1$, although the arguments can easily be extended to the cases with $a=0$ or $b=T+1$.

\begin{lemma}\label{lem:interpolation}
If $q\geq 1$, there exists an optimal solution $\theta^*$ of \eqref{eq:defCont} where: $\bullet$ for all $a<t<b$, either $\theta_t^*=(\theta_{t-1}^*+\theta_{t+1}^*)/2$, or $\theta_t^*\in \{\ell_t, u_t\}$; $\bullet$ either $\theta_a^*=\theta_{a+1}^*$, or $\theta_a^*\in \{\ell_a, u_a\}$; $\bullet$ either $\theta_{b-1}^*=\theta_{b-2}^*$, or $\theta_{b-1}^*\in \{\ell_{b-1}, u_{b-1}\}.$
\end{lemma}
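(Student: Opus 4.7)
The plan is to invoke convexity of \eqref{eq:defCont} (valid since $|x|^q$ is convex for $q \geq 1$) together with coordinate-wise optimality at any global minimizer. An optimal solution exists because the objective is continuous and the feasible box is compact; at any such optimum $\theta^*$, each coordinate $\theta_t^*$ must minimize the restriction of the objective to $[\ell_t, u_t]$ with the remaining coordinates fixed. For an interior index $a < t < b$ with both neighbors present, the restriction takes the form $g_t(\theta_t) := |\theta_t - \theta_{t-1}^*|^q + |\theta_{t+1}^* - \theta_t|^q$ up to constants; it is convex, and its stationarity condition $|\theta_t - \theta_{t-1}^*|^{q-1}\,\mathrm{sgn}(\theta_t - \theta_{t-1}^*) = |\theta_{t+1}^* - \theta_t|^{q-1}\,\mathrm{sgn}(\theta_{t+1}^* - \theta_t)$ is solved uniquely at the midpoint $m := (\theta_{t-1}^* + \theta_{t+1}^*)/2$ by strict monotonicity of $x \mapsto |x|^{q-1}\,\mathrm{sgn}(x)$ when $q > 1$, and on an interval containing $m$ when $q = 1$. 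The claimed dichotomy then follows from standard convex case analysis: if $m \in [\ell_t, u_t]$, one may take $\theta_t^* = m$; otherwise $g_t$ is strictly monotone on $[\ell_t, u_t]$, forcing $\theta_t^* \in \{\ell_t, u_t\}$.

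The boundary indices $t = a$ and $t = b-1$ are handled by the same principle applied to the restricted objective at each endpoint, which still decomposes into a sum of two convex $|\cdot|^q$ terms (the adjacent transition cost together with the boundary term $|\theta_a - 0|^q$ or $|\theta_{b-1} - 0|^q$). The identical convex-analytic argument then produces the stated stationarity-or-boundary dichotomy for $\theta_a^*$ and $\theta_{b-1}^*$, where the role played by a missing neighbor is taken by the boundary term.

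The principal obstacle is the non-strict convexity when $q = 1$: in that case $g_t$ has the full interval $[\min(\theta_{t-1}^*, \theta_{t+1}^*), \max(\theta_{t-1}^*, \theta_{t+1}^*)]$ as its set of minimizers, so a generic optimum of \eqref{eq:defCont} need not satisfy $\theta_t^* = m$ even when $m \in [\ell_t, u_t]$; the lemma only asserts the existence of \emph{some} optimal solution with the desired structure, so we must exhibit one. To do so I would use a strictly convex perturbation, solving $\min_\theta J(\theta) + \epsilon \sum_t \theta_t^2$ subject to the same bound constraints: for each $\epsilon > 0$ this perturbed problem is strictly convex, admits a unique minimizer $\theta^{(\epsilon)}$ satisfying the perturbed analog of the dichotomy, and any subsequential limit as $\epsilon \downarrow 0$ exists by compactness of the feasible box, is optimal for \eqref{eq:defCont} by continuity, and inherits the dichotomy in the limit. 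An alternative direct route is an exchange argument: since for $q = 1$ swapping $\theta_t^*$ for the midpoint $m$ (when feasible) or the nearer endpoint of $[\ell_t, u_t]$ (otherwise) keeps $g_t$ at its minimum and so preserves the overall objective, one can iteratively enforce the condition at every index, with a quadratic potential tracking deviations from the current midpoints ensuring termination.
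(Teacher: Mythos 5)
Your main argument is essentially the paper's: fix all coordinates but one, observe that the restricted objective is a sum of two convex $|\cdot|^q$ terms, and conclude from stationarity (for $q>1$) or from the structure of the flat minimizer set (for $q=1$) that the coordinate can be taken to be the midpoint or a bound, with the boundary terms $|\theta_a-0|^q$ and $|\theta_{b-1}-0|^q$ playing the role of the missing neighbor at the endpoints. For $q>1$ this is complete as you state it: strict convexity of the one-dimensional restriction makes the coordinate-wise minimizer unique, so \emph{every} optimal solution satisfies the dichotomy simultaneously at all indices. You also correctly identify the genuine subtlety at $q=1$ --- the lemma is an existence statement, a per-coordinate ``some good value is optimal'' argument does not immediately yield a single solution that is good at every index at once --- and here you are actually more careful than the paper, whose proof enumerates the five per-coordinate cases and stops.

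However, your proposed repair for $q=1$ does not work as written. The perturbation $\epsilon\sum_t\theta_t^2$ biases each flat piece of the coordinate restriction toward zero, not toward the midpoint: with $\theta_{t-1}^*=1$, $\theta_{t+1}^*=3$ and $[\ell_t,u_t]=[0,10]$, the perturbed coordinate minimizer is pinned at the kink $\theta_t=1$ for all small $\epsilon$, so the limit is a neighbor's value, which is neither the midpoint $2$ nor $\ell_t$ nor $u_t$; the ``perturbed analog of the dichotomy'' you invoke is not the midpoint condition, so nothing useful passes to the limit. A perturbation that does respect the structure is one acting on the transition costs, e.g.\ $\epsilon\sum_t(\theta_t-\theta_{t-1})^2$, whose coordinate restriction is strictly convex with unconstrained minimizer exactly at the midpoint, so the $q>1$ argument applies verbatim to the perturbed problem and the dichotomy survives the limit $\epsilon\downarrow 0$. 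Your alternative exchange argument has the right idea but the termination claim is asserted rather than proved: replacing $\theta_t^*$ by its midpoint changes the target midpoints of its neighbors, and you would need to verify that your quadratic potential is genuinely monotone under these updates. With the corrected perturbation the proof closes; as it stands, the $q=1$ branch is the one place where your write-up (like the paper's) is not yet airtight, and your chosen fix would fail.
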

\begin{proof}
We only prove the first statement in the lemma, since the other two follow from identical arguments. Given any index $a<\tau<b-1$, consider optimization \eqref{eq:defCont} where all values except for $\theta_\tau$ are fixed to any feasible value (we omit terms that do not depend on $\theta_\tau$)
\begin{equation}\label{eq:defContTau}
\begin{aligned}
	\min_{\theta_\tau}&\left|\theta_{\tau}-\theta_{\tau-1}\right|^q+\left|\theta_{\tau+1}-\theta_{\tau}\right|^q\\
	\mathrm{s.t.}\;&\ell_\tau\leq\theta_\tau\leq u_\tau.
\end{aligned} 
\end{equation}
Observe that if an optimal solution $\theta_\tau^*$ of \eqref{eq:defContTau} satisfies $\theta_\tau^*> \max\{\theta_{\tau-1},\theta_{\tau+1}\}$, then necessarily $\theta_\tau^*=\ell_\tau$, since otherwise it would be possible to decrease $\theta_\tau^*$, improving the objective value. Similarly the case $\theta_\tau^*<\min\{\theta_{\tau-1},\theta_{\tau+1}\}$ implies that $\theta_\tau^*=u_\tau$. 

To prove the first statement, assume that $\theta_\tau^*\not\in\{\ell_t,u_t\}$, which can only happen if $\min\{\theta_{\tau-1},\theta_{\tau+1}\}\leq \theta_\tau^*\leq \max\{\theta_{\tau-1},\theta_{\tau+1}\}$. If $\theta_{\tau-1}\leq \theta_{\tau+1}$, then it follows that $\theta_\tau$ is optimal for 
\begin{align}\label{eq_single0}
    \min_{\theta_\tau\in \R}\left(\theta_{\tau}-\theta_{\tau-1}\right)^q+\left(\theta_{\tau+1}-\theta_{\tau}\right)^q.
\end{align}
First, suppose that $q>1$. Then, by convexity and differentiability, the derivative of~\eqref{eq_single0} at the optimal solution should be zero. This implies
$$q(\theta_t-\theta_{t-1})^{q-1}-q(\theta_{t+1}-\theta_t)^{q-1}=0\Leftrightarrow \theta_t-\theta_{t-1}=\theta_{t+1}-\theta_t\Leftrightarrow \theta_\tau=(\theta_{t+1}+\theta_{t-1})/2.$$
Moreover, if $q=1$, it can be easily seen that any feasible value $\theta_\tau\in [\theta_{\tau-1},\theta_{\tau+1}]$ is optimal. Thus, at least one of the following five cases holds: \textit{(i)} $\theta_\tau^*=(\theta_{t+1}+\theta_{t-1})/2\in [\theta_{\tau-1},\theta_{\tau+1}]$ is optimal; \textit{(ii)} $\theta_\tau^*=\ell_t\in [\theta_{\tau-1},\theta_{\tau+1}]$ is optimal; \textit{(iii)} $\theta_\tau^*=u_t\in [\theta_{\tau-1},\theta_{\tau+1}]$;  \textit{(iv)} $\theta_\tau^*=\ell_t\geq \theta_{\tau+1}$ is optimal; or \textit{(v)} $\theta_\tau^*=u_t\leq \theta_{\tau-1}$ is optimal. Since all fives cases satisfy $\theta_\tau^*\in \{\ell_\tau, (\theta_{t+1}+\theta_{t-1})/2,u_\tau\}$, the statement of the lemma holds.

The case $\theta_{\tau-1}\geq \theta_{\tau+1}$ is handled identically, concluding the proof. 
\end{proof}

From Lemma~\ref{lem:interpolation}, we find that $\theta_t$ (when not fixed to a bound) is obtained by interpolation of the adjacent values. In other words, in an optimal solution to \eqref{eq:defCont}, there is a set of indexes $a\leq \tau_1<\tau_2<\dots<\tau_m\leq b-1$ corresponding to variables fixed to a bound, and for a variable $t$ who is not at a bound with $\tau_j<t<\tau_{j+1}$, we find its optimal value given by \begin{equation}\label{eq:interpolation}\theta_t^*=\theta_{\tau_j}^*+\frac{t-\tau_j}{\tau_{j+1}-\tau_j}(\theta_{\tau_{j+1}}^*-\theta_{\tau_j}^*).\end{equation} 
Moreover, $\theta_1>a$ (thus $\theta_{a}^*\not\in \{\ell_a,u_a\}$), then it also follows from Lemma~\ref{lem:interpolation} that $\theta_{a}^*=\theta_{a_+1}^*=\theta_{\tau_1}$; similarly, if $\tau_m<b-1$, then $\theta_{\tau_m}^*=\theta_{\tau_m+1}^*=\theta_{b-1}$.

We now propose a dynamic programming approach for solving \eqref{eq:defCont} with $q\geq 1$.
Let set $S=\left\{\texttt{lb},\texttt{mid},\texttt{ub}\right\}$, where $\texttt{lb}$ stands for ``lower bound", $\texttt{ub}$ stands for ``upper bound", and $\texttt{mid}$ stands for ``neither lower or upper bound". Given $s_a,s_{b}\in S$, define $f^0(a,s_a,b,s_b)$ as the objective value of \eqref{eq:defCont} when $\theta_a$ and $\theta_{b-1}$ are set to their bounds indicated by $s_a$ and $s_b$, respectively, and the remaining variables are set according to \eqref{eq:interpolation}. Note that $f^0(a,s_a,b,s_b)$ can be computed in $\mathcal{O}(T)$ time, and $f^0(a,s_a,b,s_b)=\infty$ if the sequence indicated by \eqref{eq:interpolation} is infeasible. 

Given $s_a,s_b\in S$, define $f^1(a,s_a,b,s_b)$ as the optimal solution of \eqref{eq:defCont}, and observe that $f^1$ can be computed via the recursion (where $a\leq t<b-1$ is the largest index of a variable set to a bound)
\begin{equation}\label{eq:recursion2}
f^1(a,s_a,b,s_b)=\min_{t\in\{a,\dots,b-1\},s\in \{\texttt{lb},\texttt{ub}\}}f^1(a,s_1,t+1,s)+f^0(t,s,b,s_b).
\end{equation}
Thus, assuming that values $f^1(a,s_a,t,s)$ with $s\in S$ and $t<b$ have been computed, \eqref{eq:recursion2} can be computed in $\mathcal{O}(T)$ time, and computing $f^1$ for all $\mathcal{O}(T^2)$ combinations of parameters requires $\mathcal{O}(T^3)$ time. Figure~\ref{fig:enter-label} illustrates an example of the recursion~\eqref{eq:recursion2}.

	\section{Statistical Guarantees}\label{sec_stats}

 \begin{figure}
    \begin{center}
    \subfloat[\scriptsize$f^0(0,\texttt{ub},5,\texttt{lb})=\infty$]{\includegraphics[width=0.45\textwidth,trim={6cm 3cm 12cm 3cm},clip]{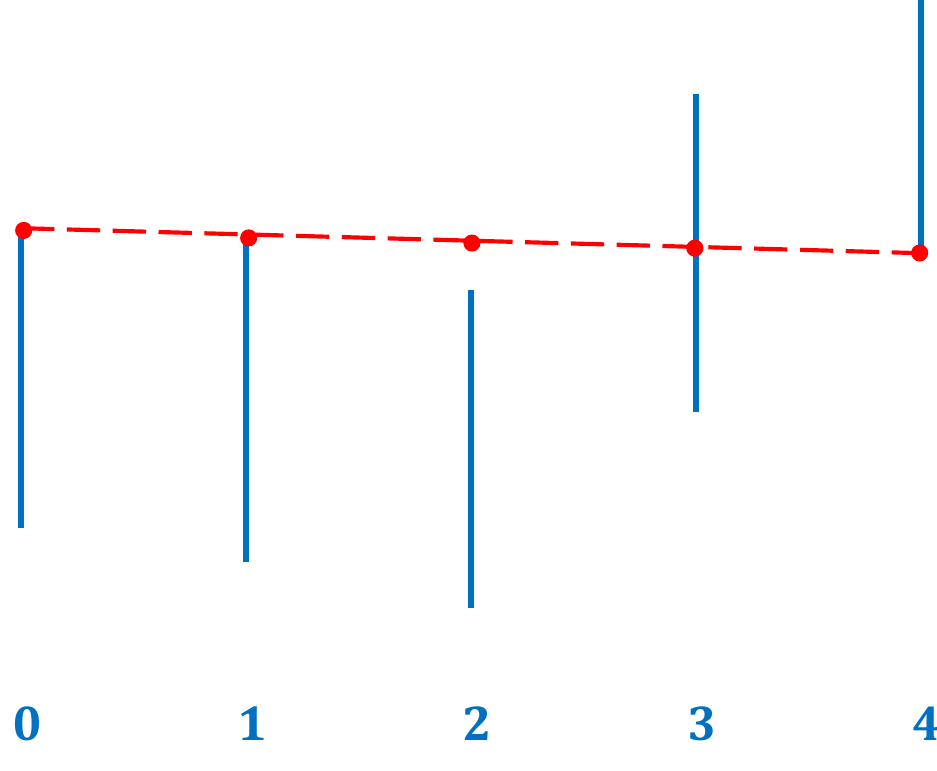}}\hfill \subfloat[\scriptsize${f^1(0,\texttt{mid},5,\texttt{lb})=f^1(0,\texttt{mid},3,\texttt{ub})+f^0(2,\texttt{ub},5,\texttt{lb})}$]{\includegraphics[width=0.45\textwidth,trim={6cm 3cm 12cm 3cm},clip]{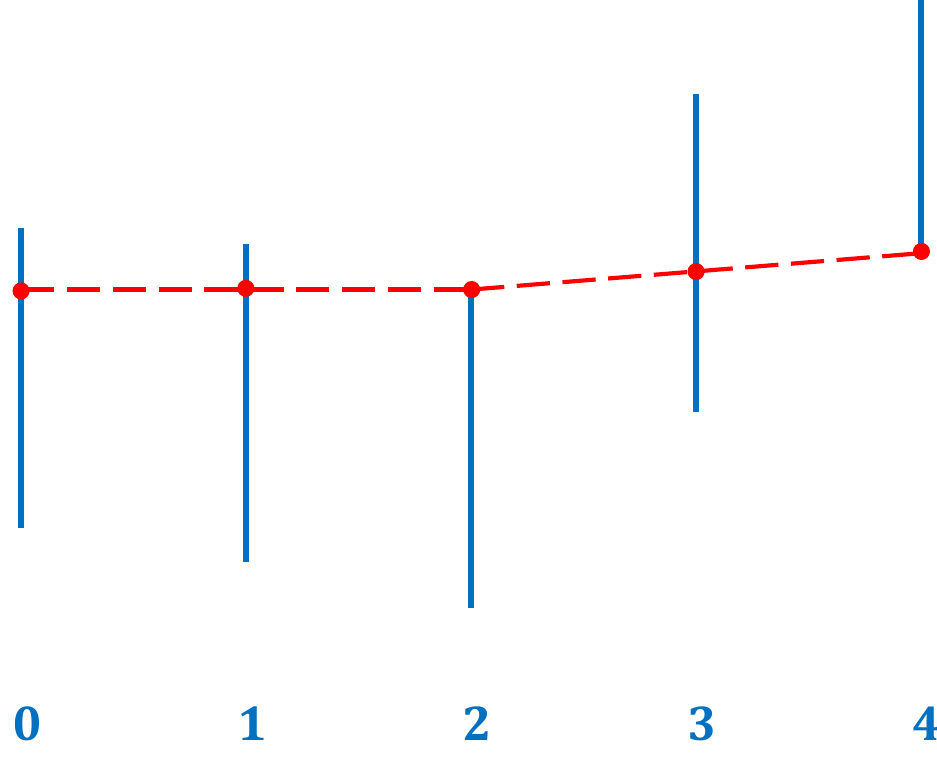}}
    \end{center}
    \caption{Example of recursion~\eqref{eq:recursion2}. (Left) Evaluation of the cost setting $\theta_0=u_0$, $\theta_4=\ell_4$ and remaining variables by interpolation \eqref{eq:interpolation}. Since this solution is infeasible due to $\theta_2\not\in [\ell_2,u_2]$, the cost is infinity.
    (Right) Optimal solution, which sets $\theta_2=u_2$. }
    \label{fig:enter-label}
\end{figure}
	In the previous section, we showed that~\ref{generalopt1} can be solved efficiently for all values of the regularization parameter $0\leq \gamma\leq 1$. However, the efficient solvability of~\ref{generalopt1} is contingent upon the availability of the approximate backward mapping $\widetilde{F}^*$. On the other hand, according to Theorem~\ref{thm_constrained}, the accuracy of this backward mapping and the choice of $\lambda_t$ directly controls the estimation error of the obtained parameters. 
In this section, our goal is to provide statistically and computationally efficient ways to obtain approximate backward mappings for two classes of time-varying MRFs, namely time-varying GMRFs and time-varying DMRFs. We then use our results to provide end-to-end sample complexity guarantees for the solution of~\ref{generalopt1}.

 To achieve this goal, we first present a general statistical bound on the backward mapping deviation. 
 Recall that $\mathcal{R}$ is the domain of the backward mapping and $\mathcal{B}_{\infty}(\mu^\star, \alpha)$ is the $\ell_\infty$-ball with radius $\rho$ centered at $\mu^\star$.
	\begin{sloppypar}
		\begin{definition}[Local lipschitzness of $\pmb{\widetilde{F}^*}$]\label{asp_lip}
			$\widetilde{F}^*$ is $(L,\alpha)$-locally Lipschitz for some $L,\alpha>0$ if $\|\widetilde{F}^*(\mu) - \widetilde{F}^*(\mu')\|_\infty\leq L\|\mu-\mu'\|_\infty$ for every $\mu,\mu'\in\mathcal{R}\cap\mathcal{B}_{\infty}(\mu^\star, \alpha)$.
		\end{definition}
	\end{sloppypar}
The local Lipschitzness of the approximate backward mapping will play a crucial role in our subsequent statistical analysis. Roughly speaking, it entails that if the mean parameters $\mu, \mu'\in\mathcal{R}\cap\mathcal{B}^{\infty}_{\mu^\star}(\alpha)$ are close, so are their images under $\widetilde{F}^*$. 

\begin{sloppypar}
	\begin{assumption}[Exponential tail bound]\label{asp_exp}
		Let $\widehat{\mu}_t =  \frac{1}{N_t}\sum_{i=1}^{N_t}\phi\left(x^{(i)}_t\right)$ be the empirical mean parameter and $\mu^\star_t = \mathbb{E}_{\theta_t^\star}[\phi(x_t)]$ its true counterpart. There exist $c, c', v>0$ such that
		\begin{align*}
			\mathbb{P}(|\widehat{\mu}_{t;i} - {\mu}^\star_{t;i}|\geq \delta)\leq c'\exp(-cN_t\delta^2)\quad \text{for every}\quad \delta\in(0,v],\ \ i=1,\dots,p,\ \ t=0,\dots, T
		\end{align*}
	\end{assumption}
\end{sloppypar}
The above assumption implies that the empirical mean parameters are concentrated around their expectation with a tail bound that decreases exponentially fast. As will be shown later, this assumption is satisfied for a wide range of MRFs, including GMRFs and DMRFs.

Recall that $\norm{\theta_t^\star-\widetilde{F}^*(\widehat{\mu}_t)}_\infty$ plays an important role in Theorem~\ref{thm_constrained}. 
Under the local Lipschitzness of $\widetilde{F}^*$ and Assumption~\ref{asp_exp}, we will next show that the backward mapping deviation $\norm{\theta_t^\star-\widetilde{F}^*(\widehat{\mu}_t)}_\infty$ can be controlled with high probability.
\begin{proposition}[Backward mapping deviation]\label{lem_back_dev}
	Suppose that Assumption~\ref{asp_exp} is satisfied and $\widetilde{F}^*$ is $(L,\alpha)$-locally Lipschitz with some $\alpha, L>0$. Then, for any constant $\tau\geq 0$ and sample size $N_t\geq \frac{1+\tau}{c\min\{v^2, \alpha^2\}}\cdot \log p$, we have
	\begin{align*}
		\norm{\theta_t^\star-\widetilde{F}^*(\widehat{\mu}_t)}_\infty\leq \underbrace{\norm{\theta_t^\star-\widetilde{F}^*({\mu}^\star_t)}_\infty}_{:=\Delta_t}+\sqrt{\frac{(1+\tau)L^2}{c}\cdot\frac{\log p}{N_t}}\!\!\!\!,\quad \text{with probability of } 1-p^{-\tau}.
	\end{align*}
\end{proposition}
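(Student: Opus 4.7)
The plan is a straightforward decomposition via the triangle inequality, followed by a union bound on the coordinate-wise concentration of $\widehat\mu_t$ combined with the local Lipschitzness of $\widetilde F^*$. First I would write
\[
\norm{\theta_t^\star-\widetilde F^*(\widehat\mu_t)}_\infty \leq \norm{\theta_t^\star-\widetilde F^*(\mu_t^\star)}_\infty + \norm{\widetilde F^*(\mu_t^\star)-\widetilde F^*(\widehat\mu_t)}_\infty = \Delta_t + \norm{\widetilde F^*(\mu_t^\star)-\widetilde F^*(\widehat\mu_t)}_\infty,
\]
reducing the problem to controlling the second term through the behavior of $\widehat\mu_t$ near $\mu_t^\star$.

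Next I would set the target deviation $\delta := \sqrt{(1+\tau)\log p / (cN_t)}$ and verify that the hypothesis on $N_t$ forces $\delta \leq \min\{v,\alpha\}$. The upper bound $\delta \leq v$ is what we need to legitimately invoke the tail bound in Assumption~\ref{asp_exp}; the upper bound $\delta \leq \alpha$ is what we need so that, on the good event, $\widehat\mu_t$ lies inside the ball $\cB_\infty(\mu_t^\star,\alpha)$ on which $\widetilde F^*$ is Lipschitz. Both follow from $N_t \geq (1+\tau)\log p / (c\min\{v^2,\alpha^2\})$ by direct algebra.

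Then I would union-bound Assumption~\ref{asp_exp} over the $p$ coordinates of the mean parameter at time $t$,
\[
\mathbb{P}\bigl(\norm{\widehat\mu_t - \mu_t^\star}_\infty \geq \delta\bigr) \leq \sum_{i=1}^{p}\mathbb{P}\bigl(|\widehat\mu_{t;i}-\mu_{t;i}^\star|\geq \delta\bigr) \leq c' p\exp(-cN_t\delta^2) = c' p^{-\tau},
\]
so the complementary event occurs with probability at least $1-p^{-\tau}$ (absorbing $c'$ into the constant as the proposition's statement implicitly does). On this event, $\widehat\mu_t\in \cR\cap\cB_\infty(\mu_t^\star,\alpha)$ since $\mu_t^\star\in \cR$ and $\norm{\widehat\mu_t-\mu_t^\star}_\infty\leq \delta\leq \alpha$; applying Definition~\ref{asp_lip} then yields $\norm{\widetilde F^*(\mu_t^\star)-\widetilde F^*(\widehat\mu_t)}_\infty \leq L\norm{\widehat\mu_t-\mu_t^\star}_\infty \leq L\delta$. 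Substituting $\delta$ back into the triangle-inequality decomposition gives the claimed bound.

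The only mildly subtle step is the second one, i.e., verifying that the single threshold $\delta$ simultaneously respects the validity regime $(0,v]$ of the tail bound and the Lipschitz radius $\alpha$; this is precisely why the sample-size hypothesis features $\min\{v^2,\alpha^2\}$ rather than just $v^2$ or $\alpha^2$. The rest of the argument is a routine triangle-inequality-plus-union-bound calculation, and the local (as opposed to global) nature of Lipschitzness does not cause additional trouble because $\widehat\mu_t$ is automatically trapped in the Lipschitz ball on the high-probability event.
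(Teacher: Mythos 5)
Your proposal is correct and follows essentially the same route as the paper's proof: the same triangle-inequality decomposition, the same choice of $\delta = \sqrt{(1+\tau)\log p/(cN_t)}$, the same union bound over coordinates, and the same use of the sample-size hypothesis to ensure $\delta \leq \min\{v,\alpha\}$ so that both the tail bound and the local Lipschitzness apply. You even flag, correctly, the same small mismatch the paper glosses over, namely that the union bound yields probability $1-c'p^{-\tau}$ rather than $1-p^{-\tau}$.
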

\begin{proof}
Applying Assumption~\ref{asp_exp} with the choice of $\delta = \sqrt{\frac{1+\tau}{c}\cdot \frac{\log p}{N_t}}$ implies that
\begin{align*}
    \mathbb{P}\left(|\widehat{\mu}_{t;i} - {\mu}^\star_{t;i}|\leq \sqrt{\frac{1+\tau}{c}\cdot \frac{\log p}{N_t}}\right)\geq 1-c'p^{-(1+\tau)},
\end{align*}
where $\delta\leq v$ is automatically satisfied due to the lower bound on $N_t$. Then, a simple union bound over index $i$ leads to
	\[
	\norm{\widehat{\mu}_t-\mu^\star_t}_\infty\leq \sqrt{\frac{1+\tau}{c}\cdot\frac{\log p}{N_t}}\leq \alpha, \quad \text{with probability of $1-c'p^{-\tau}$.}
	\]
	Under this event, one can write
	\begin{align*}
		\norm{\theta_t^\star-\widetilde{F}^*(\widehat{\mu}_t)}_\infty &\leq \norm{\theta_t^\star-\widetilde{F}^*({\mu}^\star_t)}_\infty+\norm{\widetilde{F}^*(\widehat{\mu}_t)-\widetilde{F}^*({\mu}^\star_t)}_\infty\\
		&\leq \norm{\theta_t^\star-\widetilde{F}^*({\mu}^\star_t)}_\infty+L\norm{\widehat{\mu}_t-{\mu}^\star_t}_\infty\\
		&\leq \norm{\theta_t^\star-\widetilde{F}^*({\mu}^\star_t)}_\infty+\sqrt{\frac{(1+\tau)L^2}{c}\cdot\frac{\log p}{N_t}}
	\end{align*}
where in the second inequality we used the $(L,\alpha)$-local Lipschitzness of $\widehat{F}^*$.
\end{proof}
Proposition~\ref{lem_back_dev} shows that in order to guarantee a small backward mapping deviation, $\widetilde{F}^*$ should satisfy two properties: first, it must incur a small error at the true mean parameter (captured by $\Delta_t$), and second, it must be $(\alpha, L)$-Lipschitz for some $\alpha$ and $L$. However, there is often a trade-off between these two properties. To see this trade-off, consider $\widetilde{F}^*(\Sigma) = \Sigma^{-1}$ as a natural choice of the approximate backward mapping for GMRFs. Evidently, we have $\Delta_t = 0$ for $\widetilde{F}^*(\Sigma) = \Sigma^{-1}$. However, this choice of approximate backward mapping loses its local Lipschitzness as $\Sigma$ approaches singularity (i.e., when the sample size falls below the dimension). To alleviate this issue, we propose a \textit{proxy} backward mapping that strikes a balance between $\Delta_t$ and the local Lipschitzness.

	\subsection{Approximate Backward Mapping for GMRFs} 
	Suppose that at any given time $t$, a sequence of data samples $\left\{x_t^{(i)}\right\}_{i=1}^{N_t}\subset \mathbb{R}^n$ are collected from a time-varying GMRF with distribution~\eqref{gmrf}. Our next lemma is borrowed from~\citep{ravikumar2011high} and shows that the empirical mean parameters (i.e., sample covariance matrices) satisfy Assumption~\ref{asp_exp}.
	\begin{lemma}[Lemma 1,~\cite{ravikumar2011high}]\label{l_Sigma}
		Suppose the samples $\left\{x_t^{(i)}\right\}_{i=1}^{N_t}$ are collected from a time-varying GMRF. Then, the empirical mean parameter defined as $\widehat{\Sigma}_t = ({1}/{N_t})\sum_{i=1}^{N_t}x_t^{(i)}{x_t^{(i)}}^\top$ satisfies Assumption~\ref{asp_exp} with $c'=4$, $c = 3200\max_i\{\Sigma^\star_{t;ii}\}^2$, and $v = 40\max_i\{\Sigma^\star_{t;ii}\}$.
	\end{lemma}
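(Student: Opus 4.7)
Since the statement is cited from \cite{ravikumar2011high}, my goal is to reconstruct a plausible self-contained argument. Fix a time $t$ (which I suppress for brevity) and an index pair $(j,k)\in [n]^2$. The entry-wise deviation is a sample average of i.i.d.\ scalars:
\begin{align*}
\widehat{\Sigma}_{jk}-\Sigma^\star_{jk} \;=\; \frac{1}{N_t}\sum_{i=1}^{N_t}\Big(x^{(i)}_{j}x^{(i)}_{k}-\Sigma^\star_{jk}\Big).
\end{align*}
Each summand is a product of jointly Gaussian random variables, which is sub-exponential rather than sub-Gaussian, so I cannot apply Hoeffding directly. Instead, I would reduce to chi-squared tail bounds via a polarization identity and then take a union bound.

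\textbf{Step 1 (polarization).} Write
\begin{align*}
x^{(i)}_{j}x^{(i)}_{k} \;=\; \tfrac{1}{4}\Big[\big(x^{(i)}_{j}+x^{(i)}_{k}\big)^2 - \big(x^{(i)}_{j}-x^{(i)}_{k}\big)^2\Big] \;=:\; \tfrac{1}{4}\big[(U^{(i)})^2-(V^{(i)})^2\big].
\end{align*}
Since $(x^{(i)}_j,x^{(i)}_k)$ is zero-mean jointly Gaussian, $U^{(i)}\sim\mathcal{N}(0,\sigma_U^2)$ and $V^{(i)}\sim\mathcal{N}(0,\sigma_V^2)$ with
\begin{align*}
\sigma_U^2,\sigma_V^2 \;\leq\; \Sigma^\star_{jj}+\Sigma^\star_{kk}+2|\Sigma^\star_{jk}| \;\leq\; 4\,\max_i \Sigma^\star_{ii}\;=:\;4M,
\end{align*}
using $|\Sigma^\star_{jk}|\leq \sqrt{\Sigma^\star_{jj}\Sigma^\star_{kk}}\leq M$. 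Consequently, $\mathbb{E}[(U^{(i)})^2]=\sigma_U^2$, $\mathbb{E}[(V^{(i)})^2]=\sigma_V^2$, and
\begin{align*}
\widehat{\Sigma}_{jk}-\Sigma^\star_{jk} \;=\; \tfrac{1}{4}\underbrace{\Big(\tfrac{1}{N_t}\!\sum_i (U^{(i)})^2-\sigma_U^2\Big)}_{=:\,A} - \tfrac{1}{4}\underbrace{\Big(\tfrac{1}{N_t}\!\sum_i (V^{(i)})^2-\sigma_V^2\Big)}_{=:\,B}.
\end{align*}

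\textbf{Step 2 (chi-squared concentration).} Both $N_t A/\sigma_U^2+N_t$ and $N_t B/\sigma_V^2+N_t$ are $\chi^2_{N_t}$-distributed. I would invoke a standard chi-squared tail bound (Laurent--Massart type),
\begin{align*}
\mathbb{P}\!\left(\Big|\tfrac{1}{N}\sum_{i=1}^N W_i^2 - \sigma^2\Big| \geq \sigma^2 r\right)\leq 2\exp\!\big(-c_0 N r^2\big),\qquad r\in(0,r_0],
\end{align*}
for absolute constants $c_0,r_0>0$. Setting $r=\delta/(4\sigma_U^2)$ and similarly for $\sigma_V^2$ and combining $|A|,|B|\leq 2\delta$ by a union bound yields
\begin{align*}
\mathbb{P}\!\left(|\widehat{\Sigma}_{jk}-\Sigma^\star_{jk}|\geq\delta\right) \;\leq\; 4\exp\!\left(-\frac{c_0\,N_t\,\delta^2}{(4M)^2}\right),
\end{align*}
valid for $\delta$ in a range $(0,v]$ dictated by the admissible range of $r$. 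Tracking the universal constants $c_0,r_0$ through this calculation is what yields $c=3200\,M^2$, $v=40\,M$, and $c'=4$.

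\textbf{Main obstacle.} The nontrivial work lies not in the structure of the argument but in book-keeping constants: the exact value of $c_0$ depends on which variant of the Laurent--Massart / Bernstein chi-squared bound one uses, and the stated constants $3200$ and $40$ are artifacts of that choice and of the factor $1/4$ from polarization. I would carry out the exact constant tracking as in \cite{ravikumar2011high} rather than re-deriving it, since the structure of the argument is fixed and only the numerical constants are at stake.
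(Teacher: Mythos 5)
The paper gives no proof of this lemma---it is imported verbatim as Lemma~1 of the cited reference---so the only meaningful comparison is with that source, and your reconstruction (polarization $x_jx_k=\tfrac14[(x_j+x_k)^2-(x_j-x_k)^2]$, variance bound $\sigma_U^2,\sigma_V^2\leq 4\max_i\Sigma^\star_{ii}$, chi-squared/sub-exponential tail bounds on the two squared sums, union bound) is exactly the argument used there. The structure is correct; the only loose ends are the book-keeping of the absolute constants (your choice $r=\delta/(4\sigma_U^2)$ is not quite consistent with the claimed $|A|,|B|\leq 2\delta$, and the final exponent should carry the resulting factor), which you explicitly defer to the reference, so this is acceptable for a cited auxiliary result.
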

	 Recall that the canonical parameter corresponds to the inverse covariance matrix for GMRFs. Hence, we have $p = n^2$. To address the singularity of the backward mapping when $N_t< n$, we use the approximate backward mapping introduced by~\citet{yang2014elementary}: consider the soft-thresholding operator $\mathrm{ST}_{\nu}:\mathbb{R}^{n\times n}\to \mathbb{R}^{n\times n}$ defined as $\texttt{ST}_{\nu}(M)_{ij} = M_{ij}-\mathrm{sign}(M_{ij})\min\{|M_{ij}|,\nu\}$ if $i\not=j$, and $\texttt{ST}_{\nu}(M)_{ij} = M_{ij}$ if $i=j$. The proposed approximate backward mapping is then given by $\widetilde F^*(\widehat \Sigma_t) = [\texttt{ST}_{\nu}(\widehat \Sigma_t)]^{-1}$. Clearly, we have $\Delta_t = 0$ for the choice of $\nu=0$. However, we will later show that $\nu>0$ is essential for guaranteeing the $(L,\alpha)$-local Lipschitzness of $\widetilde F^*$  when $N_t< n$. 
	
	\begin{assumption}[Bounded norm]\label{asp_bound}
		There exist constant numbers $\kappa_1<\infty$, $\kappa_2>0$, and $\kappa_3<\infty$ such that
		\begin{align*}
			\|\Theta^\star_t\|_\infty\leq \kappa_1,\; \inf_{w:\|w\|_\infty=1}\|\Sigma^\star_tw\|_\infty\geq \kappa_2,\; \|\Sigma^\star_t\|_{\linf}\leq \kappa_3,\quad \text{for every $t = 0,\dots, T$.}
		\end{align*}
	\end{assumption}
The above assumption is fairly mild and implies that the true covariance matrices and their inverses have bounded norms. Without loss of generality and to streamline the presentation, we assume that $\kappa_1,\kappa_3\geq 1$ and $\kappa_2\leq 1$.

\begin{definition}[Weak sparsity]\label{def_weak}
	We say $\Sigma \in\mathbb{R}^{n\times n}$ is $(s,r)$-weakly sparse for some $s\geq 0$ and $0\leq r<1$ if $\max_{i}\sum_{j=1}^n|\Sigma_{ij}|^r\leq s_r$.
\end{definition} 
Intuitively, we say that ``the true covariance matrices are weakly sparse'' if $\{\Sigma^\star_t\}_{t=0}^T$ are $(s,r)$-weakly sparse with $s_r\ll n$ for some $0\leq r< 1$. 
The notion of weak sparsity extends the classical notion of sparsity for matrices. 
Indeed, except for a few special cases, a sparse matrix does not have a sparse inverse. Consequently, a sparse precision matrix may {not} lead to a sparse covariance matrix. However, a large class of sparse precision matrices have weakly sparse inverses:
\begin{sloppypar}
	\begin{itemize}
		\item If $\Theta^\star_t$ has a banded structure with small bandwidth, then it is known that the elements of $\Sigma^\star_t = \Theta_t^{-1}$ enjoy exponential decay away from the main diagonal~\citep{demko1984decay, kershaw1970inequalities}. Under such condition, one can verify that $s\leq\frac{C}{1-\rho^r}$ for some constant $C>0$ and $\rho<1$.
		\item A similar statement holds for a class of inverse covariance matrices whose support graphs have large average path length~\citep{benzi2007decay, benzi2015decay}; a large class of inverse covariance matrices with row- and column-sparse structures satisfy this condition.
		\item Suppose that $\Theta^\star_t$ is diagonally dominant with $\Theta^\star_{t,ii}-\sum_{j\not=i}|\Theta^\star_{t,ij}| > 0$ for $i=1,2,\dots,d$. Then, a simple derivation reveals that $\Sigma^\star_t$ is $(s,r)$-weakly sparse with $s\leq d^{1-r}/\left(\min_i \left\{\Theta^\star_{t,ii}-\sum_{j\not=i}|\Theta^\star_{t,ij}|\right\}\right)^{r}$ for any $0\leq r< 1$. The diagonally dominant structures naturally arise in the context of graphical model inference; see~\citep{egilmez2016graph} for various applications, including those with graph Laplacian structures.
	\end{itemize}
\end{sloppypar}
Under Assumption~\ref{asp_bound} and $(s,r)$-weak sparsity of the true covariance matrices, we show that the proposed approximate backward mapping $\widetilde F^*(\widehat \Sigma_t) = [\texttt{ST}_{\nu}(\widehat \Sigma_t)]^{-1}$ is $(L,\alpha)$-locally Lipschitz and has small $\Delta_t$.
\begin{sloppypar}
	\begin{lemma}\label{lem_error_lip}
		Suppose that Assumption~\ref{asp_bound} is satisfied and $\Sigma^\star_t$ is $(s,r)$-weakly sparse for some $s\geq 0$ and $0\leq r<1$.
		Then, the following statements hold for the approximate backward mapping $\widetilde F^*(\widehat \Sigma_t) = [\texttt{ST}_{\nu}(\widehat \Sigma_t)]^{-1}$:
		\begin{itemize}
			\item We have $\Delta_t\leq({2\kappa_1}/{\kappa_2})\nu$;
			\item Assume that $\nu\leq (\kappa_2/(20s))^{1/(1-r)}$. Then, $\widetilde F^*(\Sigma)$ is $(L,\alpha)$-locally Lipschitz with $L = 4/\kappa_2^2$ and $\alpha = \min\{\kappa_2\nu^r/(24s), \nu/2\}$.
		\end{itemize}
	\end{lemma}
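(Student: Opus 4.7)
The proof rests on the perturbation identity $X^{-1} - Y^{-1} = X^{-1}(Y-X)Y^{-1}$ together with the elementary mixed-norm inequality $\|MCN\|_{\linf} \leq \|M\|_{\infty}\|C\|_{\linf}\|N^\top\|_{\infty}$, which lets me control entry-wise errors in the inverse through the induced $\infty$-norm (maximum absolute row sum) of the outer factors. Throughout, I write $A=\Sigma_t^\star$, $\tilde A=\texttt{ST}_\nu(\Sigma_t^\star)$, and for the second claim $B=\texttt{ST}_\nu(\Sigma)$, $B'=\texttt{ST}_\nu(\Sigma')$ with $\Sigma,\Sigma'\in\cR\cap\cB_\infty(\Sigma_t^\star,\alpha)$. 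The key preliminary estimate is a weak-sparsity bound on the induced $\infty$-norm of the thresholding error: entries with $|A_{ij}|\leq\nu$ contribute at most $|A_{ij}|^r\nu^{1-r}$ to each row sum, while at most $s/\nu^r$ entries per row satisfy $|A_{ij}|>\nu$ and each contributes at most $\nu$, so Definition~\ref{def_weak} yields $\|\tilde A - A\|_\infty \leq 2s\nu^{1-r}$, which under the stipulated bound on $\nu$ is at most $\kappa_2/10$.

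For the first claim, I would Neumann-expand $\tilde A^{-1}$ around $A^{-1}$: combining $\|A^{-1}\|_\infty\leq 1/\kappa_2$ (from the second condition in Assumption~\ref{asp_bound}) with the preliminary estimate gives $\|A^{-1}\|_\infty\|\tilde A-A\|_\infty\leq 1/10$, hence $\|\tilde A^{-1}\|_\infty\leq (10/9)/\kappa_2$. Then the mixed-norm inequality applied to $\Theta_t^\star - \tilde A^{-1} = \Theta_t^\star(\tilde A - A)\tilde A^{-1}$, together with the trivial bound $\|\tilde A-A\|_{\linf}\leq\nu$ and the assumption $\|\Theta_t^\star\|_\infty\leq\kappa_1$, yields $\Delta_t\leq \kappa_1\cdot\nu\cdot(10/9)/\kappa_2 \leq (2\kappa_1/\kappa_2)\nu$, matching the claim (with slack that absorbs the $10/9$).

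For the second claim, the task reduces to bounding $\|B^{-1}\|_\infty$ and $\|B'^{-1}\|_\infty$ uniformly, since $\|B-B'\|_{\linf}\leq\|\Sigma-\Sigma'\|_{\linf}$ is immediate from the $1$-Lipschitzness of soft-thresholding. The crucial observation is that $\alpha\leq\nu/2$ forces both $B_{ij}$ and $\tilde A_{ij}$ to vanish whenever $|\Sigma_{t;ij}^\star|\leq\nu/2$; hence only the $\leq 2s/\nu^r$ entries per row with $|\Sigma_{t;ij}^\star|>\nu/2$ contribute to $B-\tilde A$, each by at most $\alpha$. The second restriction $\alpha\leq\kappa_2\nu^r/(24s)$ then gives $\|B-\tilde A\|_\infty\leq \kappa_2/12$, and Neumann-expanding $B^{-1}$ around $\tilde A^{-1}$ with $\|\tilde A^{-1}\|_\infty\leq(10/9)/\kappa_2$ yields $\|B^{-1}\|_\infty<2/\kappa_2$; the same argument applies to $B'$. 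Finally, the mixed-norm inequality applied to $B^{-1}-B'^{-1}=B^{-1}(B'-B)B'^{-1}$ delivers the Lipschitz constant $4/\kappa_2^2$.

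I expect the main obstacle to be the careful bookkeeping needed to bridge the two norms in play: $\widetilde F^*$ is measured in the element-wise max norm $\|\cdot\|_{\linf}$ (to match Assumption~\ref{asp_exp} and the constraint in \ref{generalopt1}), whereas the perturbation bounds live naturally in the induced $\infty$-norm, which is the only norm preserved under matrix multiplication. The weak-sparsity hypothesis is precisely what prevents this switch from costing a factor of $n$; getting the explicit constants $20,\ 24,\ 4$ in the statement to line up requires tracking the two smallness parameters ($\nu^{1-r}$ from the thresholding step and $\alpha/\nu^r$ from the $\Sigma$-perturbation step) through two successive Neumann series.
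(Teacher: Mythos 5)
Your proposal is correct and takes essentially the same route as the paper: the same perturbation identity $X^{-1}-Y^{-1}=X^{-1}(Y-X)Y^{-1}$ combined with the mixed bound $\norm{MCN}_{\linf}\leq\norm{M}_\infty\norm{C}_{\linf}\norm{N}_\infty$, and the same weak-sparsity control of $\norm{\texttt{ST}_\nu(\Sigma)-\Sigma^\star_t}_\infty$, which the paper imports as a cited lemma of \citet{yang2014elementary} rather than re-deriving, and then uses to bound $\norm{[\texttt{ST}_\nu(\Sigma)]^{-1}}_\infty\leq 2/\kappa_2$ by directly lower-bounding $\norm{\texttt{ST}_\nu(\Sigma)w}_\infty$ instead of via your Neumann expansion. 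The only nit is that your row-sum count for $B-\tilde A$ ignores the diagonal entries, which $\texttt{ST}_\nu$ never thresholds and which can therefore each contribute an extra $\alpha$ to a row sum even when $|\Sigma^\star_{t;ii}|\leq\nu/2$; this is harmless and is absorbed by the slack in the constants.
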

\end{sloppypar}
To provide the proof of the above lemma, we will rely on the following intermediate lemma adapted from~\citep[Lemma 1]{yang2014elementary}.
\begin{lemma}[\cite{yang2014elementary}]\label{lem_yang}
	    We have 
	    \begin{align*}
		\norm{\texttt{ST}_{\nu}(\Sigma)-\Sigma^\star_t}_\infty\leq 5s_r\nu^{1-r}+3s_r\nu^{-r}\alpha,
	\end{align*}
	for every $\Sigma\in \mathcal{B}_{\Sigma_t^\star}(\alpha)$ and $\alpha\leq \nu/2$.
	\end{lemma}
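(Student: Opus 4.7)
The plan is to bound the induced $\infty$-norm $\|\texttt{ST}_\nu(\Sigma) - \Sigma_t^\star\|_\infty$ by estimating each absolute row sum $R_i := \sum_j |[\texttt{ST}_\nu(\Sigma)]_{ij} - \Sigma^\star_{t;ij}|$ uniformly in the row index $i$, and then taking the maximum. Fix $i$ and partition the column indices into a \emph{significant} set $A_i = \{j : |\Sigma^\star_{t;ij}| > \nu/2\}$ and a \emph{small} set $B_i = \{j : |\Sigma^\star_{t;ij}| \leq \nu/2\}$. The diagonal index is handled separately: since soft-thresholding leaves diagonal entries untouched, it contributes exactly $|\Sigma_{ii} - \Sigma^\star_{t;ii}| \leq \alpha$ to $R_i$.

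The first step is to bound $|A_i|$ using $(s,r)$-weak sparsity. Since each $j \in A_i$ satisfies $|\Sigma^\star_{t;ij}|^r > (\nu/2)^r$, summing gives $|A_i|(\nu/2)^r \leq \sum_{j\in A_i} |\Sigma^\star_{t;ij}|^r \leq s_r$, hence $|A_i| \leq 2^r s_r \nu^{-r}$. For the per-entry error on $A_i$, I would apply the triangle inequality together with the elementary fact that soft-thresholding moves an entry by at most $\nu$, giving
\begin{equation*}
\bigl|[\texttt{ST}_\nu(\Sigma)]_{ij} - \Sigma^\star_{t;ij}\bigr| \leq \bigl|[\texttt{ST}_\nu(\Sigma)]_{ij} - \Sigma_{ij}\bigr| + |\Sigma_{ij} - \Sigma^\star_{t;ij}| \leq \nu + \alpha.
\end{equation*}
Summing over $A_i$ yields a contribution of at most $2^r s_r \nu^{-r}(\nu + \alpha) = 2^r s_r \nu^{1-r} + 2^r s_r \nu^{-r}\alpha$.

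The second step handles $B_i$, using the hypothesis $\alpha \leq \nu/2$ crucially. For any off-diagonal $j \in B_i$, $|\Sigma_{ij}| \leq |\Sigma^\star_{t;ij}| + \alpha \leq \nu/2 + \nu/2 = \nu$, so the thresholding operator sets $[\texttt{ST}_\nu(\Sigma)]_{ij} = 0$. The error therefore equals $|\Sigma^\star_{t;ij}|$, which I would bound using the standard weak-sparsity identity $|\Sigma^\star_{t;ij}| = |\Sigma^\star_{t;ij}|^{1-r}|\Sigma^\star_{t;ij}|^r \leq (\nu/2)^{1-r}|\Sigma^\star_{t;ij}|^r$. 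Summing over $B_i$ produces $\sum_{j \in B_i}|\Sigma^\star_{t;ij}| \leq (\nu/2)^{1-r} s_r$.

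Combining all three contributions gives
\begin{equation*}
R_i \leq \alpha + 2^r s_r \nu^{1-r} + 2^r s_r \nu^{-r}\alpha + (\nu/2)^{1-r} s_r.
\end{equation*}
The final step is just constant bookkeeping: using $r \in [0,1)$ so that $2^r \leq 2$ and $(\nu/2)^{1-r} \leq \nu^{1-r}$, and absorbing the stray $\alpha$ using $\alpha \leq \nu/2$ together with $s_r \geq 1$ (which holds whenever any $|\Sigma^\star_{t;ij}| \geq 1$, otherwise one rescales) so that $\alpha \leq s_r \nu^{-r}\alpha$ when $\nu \leq 1$ or $\alpha \leq s_r \nu^{1-r}$ when $\nu > 1$, I would verify that the sum collapses to $5 s_r \nu^{1-r} + 3 s_r \nu^{-r}\alpha$ as stated. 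Since $i$ was arbitrary, the same bound holds for the induced $\infty$-norm. The main obstacle is not conceptual but purely the constant tracking: the raw bound has constants $3$ and $2$ with a leftover $\alpha$, and one must use the hypothesis $\alpha \leq \nu/2$ at the right moment to land exactly on the stated constants $5$ and $3$ without losing a factor.
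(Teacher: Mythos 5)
You are reconstructing a result the paper itself never proves (it is imported, constants and all, from \citet[Lemma 1]{yang2014elementary}), so the comparison can only be against the standard argument, which is indeed the route you take. Everything up to the last step is correct: the cardinality bound $|A_i|\le 2^r s_r\nu^{-r}$, the per-entry bound $\nu+\alpha$ on $A_i$ (valid whether or not the entry survives thresholding), the observation that every \emph{off-diagonal} entry in $B_i$ is zeroed out (this is where $\alpha\le\nu/2$ is used), and the bound $\sum_{j\in B_i}|\Sigma^\star_{t;ij}|\le(\nu/2)^{1-r}s_r$. After your simplifications $2^r\le 2$ and $(\nu/2)^{1-r}\le\nu^{1-r}$, this gives, for every row $i$,
\begin{equation*}
R_i\;\le\;\alpha+3s_r\nu^{1-r}+2s_r\nu^{-r}\alpha,
\end{equation*}
where the stray $\alpha$ is exactly the contribution of the diagonal, which $\texttt{ST}_{\nu}$ never touches.

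The final absorption step is a genuine gap, not ``constant bookkeeping.'' You need $\alpha\le 2s_r\nu^{1-r}+s_r\nu^{-r}\alpha$, which in essence is a lower bound of the form $s_r\nu^{-r}\gtrsim 1$ (equivalently $\nu^r\lesssim s_r$), and this is \emph{not} implied by the hypotheses $\Sigma\in\mathcal{B}_{\Sigma_t^\star}(\alpha)$, $\alpha\le\nu/2$. Your patch---assume $s_r\ge 1$, ``otherwise one rescales''---provably cannot work: under $(\Sigma^\star_t,\Sigma,\nu,\alpha)\mapsto(c\Sigma^\star_t,c\Sigma,c\nu,c\alpha)$ both sides of the claimed inequality scale by $c$ while $s_r\mapsto c^rs_r$ and $\nu^r\mapsto c^r\nu^r$, so the critical quantity $s_r\nu^{-r}$ is scale-invariant; rescaling to force $s_r\ge 1$ inflates $\nu$ by the same token, and your $\nu>1$ branch (which needs $\nu/2\le s_r\nu^{1-r}$, i.e., $\nu^r\le 2s_r$) is the very condition you were trying to avoid. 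Moreover, the leftover $\alpha$ is not an artifact of your bookkeeping: taking $\Sigma^\star_t=e_1e_1^\top$ (so $s_r=1$), $\Sigma=(1+\alpha)e_1e_1^\top$ and $\alpha=\nu/2$, the left-hand side equals $\nu/2$ while the right-hand side equals $6.5\nu^{1-r}$, so the stated inequality itself fails once $\nu^r>13$---the lemma is only true under the implicit side condition $\nu^r\lesssim s_r$. That condition does hold wherever the paper invokes the lemma: in Lemma~\ref{lem_error_lip} one has $\nu\le(\kappa_2/(20s))^{1/(1-r)}$, and Assumption~\ref{asp_bound} together with the symmetry of $\Sigma^\star_t$ forces $s_r\ge\kappa_2^r$, hence $\nu<\kappa_2$ and $s_r\nu^{-r}>1$, at which point your own computation closes with constants $3$ and $3$, comfortably below $5$ and $3$. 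But that condition must be stated and used; no tracking of constants lands on $5s_r\nu^{1-r}+3s_r\nu^{-r}\alpha$ from $\alpha\le\nu/2$ alone. (Equivalently, had the diagonal been thresholded as well, your calculation would give $3s_r\nu^{1-r}+2s_r\nu^{-r}\alpha$ with no stray term and no side condition needed.)
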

\paragraph{Proof of Lemma~\ref{lem_error_lip}}
		We first start with the proof of the first statement. One can write
		\begin{align}
			\Delta_t =  \norm{\Theta_t^\star - [\texttt{ST}_{\nu}(\Sigma_t^\star)]^{-1}}_{\infty/\infty}&\leq \norm{\Theta^\star_t}_\infty\norm{[\texttt{ST}_{\nu}(\Sigma_t^\star)]^{-1}}_\infty\norm{\texttt{ST}_{\nu}(\Sigma_t^\star)-\Sigma_t^\star}_{\infty/\infty}\nonumber\\
			&\leq \kappa_1\nu\norm{[\texttt{ST}_{\nu}(\Sigma_t^\star)]^{-1}}_\infty\label{eq_deltat}
		\end{align}
	where the last inequality follows from $\norm{\texttt{ST}_{\nu}(\Sigma_t^\star)-\Sigma_t^\star}_{\infty/\infty}\leq \nu$ and our assumption $\norm{\Theta^\star_t}_\infty\leq \kappa_1$. Next, we provide an upper bound for $\norm{[\texttt{ST}_{\nu}(\Sigma_t^\star)]^{-1}}_\infty$. 
	One can write
	\begin{align*}
		\norm{\texttt{ST}_{\nu}(\Sigma_t^\star)w}_\infty&\geq \norm{\Sigma_t^\star w}_\infty-\norm{\left(\texttt{ST}_{\nu}(\Sigma_t^\star)-\Sigma_t^\star\right)w}_\infty\\
		&\geq \left(\kappa_2-\norm{\left(\texttt{ST}_{\nu}(\Sigma_t^\star)-\Sigma_t^\star\right)}_\infty\right)\norm{w}_\infty\\
		&\geq (\kappa_2/2)\norm{w}_\infty
	\end{align*}
	where the last inequality is due to our choice of $\nu$. This implies that $\norm{[\texttt{ST}_{\nu}(\Sigma_t^\star)]^{-1}}_\infty\leq 2/\kappa_2$. Combining this inequality with~\eqref{eq_deltat} leads to $\Delta_t\leq (2\kappa_1/\kappa_2)\nu$.
	
	Moving on to the proof of the second statement, we invoke Lemma~\ref{lem_yang} to write
	\begin{align*}
		\norm{\texttt{ST}_{\nu}(\Sigma)-\Sigma^\star_t}_\infty\leq 5s_r\nu^{1-r}+3s_r\nu^{-r}\alpha \qquad \text{for every $\Sigma\in \mathcal{B}_{\Sigma_t^\star}(\alpha)$ and $\alpha\leq \nu/2$.}
	\end{align*}
Based on our assumption on $\nu$, we have $5s_r\nu^{1-r}\leq \kappa_2/4$. Similarly, based on our assumed upper bound on $\alpha$, we have $3s_r\nu^{-r}\alpha\leq \kappa_2/4$. This implies that $\norm{\texttt{ST}_{\nu}(\Sigma)-\Sigma^\star_t}_\infty\leq \kappa_2/2$ for every $\Sigma\in \mathcal{B}_{\Sigma_t^\star}(\alpha)$. On the other hand, for every $\Sigma, \Sigma'\in \mathcal{B}_{\Sigma_t^\star}(\alpha)$, we have 
\begin{align}\label{eq_ub}
	\norm{[\texttt{ST}_{\nu}(\Sigma)]^{-1}-[\texttt{ST}_{\nu}(\Sigma')]^{-1}}_{\infty/\infty}\leq\norm{[\texttt{ST}_{\nu}(\Sigma)]^{-1}}_\infty\norm{[\texttt{ST}_{\nu}(\Sigma')]^{-1}}_\infty\norm{\Sigma-\Sigma'}_{\infty/\infty}
\end{align}
Similarly, one can write
\begin{align*}
	\|\texttt{ST}_{\nu}({\Sigma})w\|_\infty&\geq \|\Sigma^\star_t w\|_\infty-\left\|(\texttt{ST}_{\nu}({\Sigma})-\Sigma^\star_t)w\right\|_\infty\nonumber\\
	&\geq\left(\kappa_2-\left\|\texttt{ST}_{\nu}({\Sigma})-\Sigma^\star_t\right\|_\infty\right)\|w\|_\infty\\
	&\geq (\kappa_2/2)\|w\|_\infty
\end{align*}
where in the last inequality, we used $\norm{\texttt{ST}_{\nu}(\Sigma)-\Sigma^\star_t}_\infty\leq \kappa_2/2$. This in turn implies $\norm{[\texttt{ST}_{\nu}(\Sigma)]^{-1}}_\infty\leq 2/\kappa_2$. Similarly, we have $\norm{[\texttt{ST}_{\nu}(\Sigma')]^{-1}}_\infty\leq 2/\kappa_2$. Combining these inequalities with~\eqref{eq_ub} leads to 
\begin{align*}
	\norm{[\texttt{ST}_{\nu}(\Sigma)]^{-1}-[\texttt{ST}_{\nu}(\Sigma')]^{-1}}_{\infty/\infty}\leq(4/\kappa_2^2)\norm{\Sigma-\Sigma'}_{\infty/\infty},
	 \qquad \text{for every $\Sigma, \Sigma'\in \mathcal{B}_{\Sigma_t^\star}(\alpha)$.}
\end{align*}
This completes the proof. $\hfill\square$

Lemma~\ref{lem_error_lip} highlights the trade-off in choosing the threshold $\nu$: to keep the error $\Delta_t$ small, it is desirable to pick a small value for $\nu$. However, $\nu$ cannot be too small, as it would shrink the radius of the neighborhood within which $\widetilde{F}^*$ is locally Lipschitz. Our next lemma shows that it suffices to pick $\nu$ large enough so that $\norm{\widehat{\Sigma}_t-\Sigma_t^\star}_{\infty/\infty}\leq \alpha$ with high probability, where $\alpha$ is a function of $\nu$ defined in Lemma~\ref{lem_error_lip}. To streamline the presentation, we first define the following quantities which will be used extensively in our subsequent arguments:
\begin{align*}
    &\mathcal{N}_{\mathrm{g}} := \left(\frac{4s_r}{\kappa_3\kappa_2}\right)^{\frac{2}{1-r}}, \qquad \mathcal{E}_{\mathrm{g}} := \frac{10\kappa_1+2\kappa_3^{-1}}{5\kappa_2^2},\\
    \bar\Theta_t :=\!\! \min_{(i,j)\in\mathcal{S}_t}|\Theta^\star_{t;ij}|,\ &\Delta\bar\Theta_t :=\!\! \min_{(i,j)\in\mathcal{D}_t}|\Theta^\star_{t;ij}-\Theta^\star_{t-1;ij}|,\  \Theta_t^{\min} := \begin{cases}
    \min\left\{\bar\Theta_t, \frac{\Delta\bar\Theta_t}{2}, \frac{\Delta\bar\Theta_{t-1}}{2}\right\} & t\geq 1\\
    \min\left\{\bar\Theta_t, \frac{\Delta\bar\Theta_t}{2}\right\} & t=0
    \end{cases}
\end{align*}
The quantities $\mathcal{N}_{\mathrm{g}}$ and $\mathcal{E}_{\mathrm{g}}$ will be used in our subsequent error bounds, while $\bar\Theta_t$, $\Delta\bar\Theta_t$, and $\Theta_t^{\min}$ are defined to capture the minimum nonzero elements of the true parameter and their differences.

\begin{lemma}\label{lem_bound_gmrf}
	Suppose that Assumption~\ref{asp_bound} is satisfied and $\Sigma^\star_t$ is $(s,r)$-weakly sparse for some $s\geq 0$ and $0\leq r<1$. Suppose that $N_t\geq \mathcal{N}_{\mathrm{g}}\log n$. Then, with the choice of $\nu_t = \sqrt{\log n/N_t}$, we have
	\[
	\norm{\Theta_t^\star-[\texttt{ST}_{\nu_t}(\widehat{\Sigma}_t)]^{-1}}_{\infty/\infty}\leq \mathcal{E}_{\mathrm{g}}\sqrt{\frac{\log n}{N_t}},\qquad \text{with probability of $1-4n^{-15}$.}
	\]
\end{lemma}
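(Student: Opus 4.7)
The plan is to combine Proposition~\ref{lem_back_dev} (the generic backward-mapping deviation inequality) with Lemma~\ref{lem_error_lip} (bias bound and local Lipschitzness of the soft-thresholded inverse) and Lemma~\ref{l_Sigma} (sub-Gaussian concentration of the sample covariance), specialized to the GMRF setting with approximate backward mapping $\widetilde{F}^*(\Sigma) = [\texttt{ST}_{\nu_t}(\Sigma)]^{-1}$.

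My first step is to verify that the choice $\nu_t = \sqrt{\log n/N_t}$ satisfies the precondition $\nu_t \le (\kappa_2/(20s_r))^{1/(1-r)}$ required by the second bullet of Lemma~\ref{lem_error_lip}. Raising both sides to the $2/(1-r)$-th power, this precondition becomes $N_t \ge (20s_r/\kappa_2)^{2/(1-r)}\log n$; the definition $\mathcal{N}_{\mathrm{g}} = (4s_r/(\kappa_3\kappa_2))^{2/(1-r)}$ together with $\kappa_3 \ge 1$ makes the sample-size assumption $N_t \ge \mathcal{N}_{\mathrm{g}}\log n$ suffice (up to a tightening of constants). Lemma~\ref{lem_error_lip} then delivers $\Delta_t \le (2\kappa_1/\kappa_2)\nu_t$ together with the Lipschitz constant $L = 4/\kappa_2^2$ on a neighborhood of radius $\alpha = \min\{\kappa_2\nu_t^r/(24s_r),\,\nu_t/2\}$ around $\Sigma_t^\star$.

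My second step is to control the stochastic perturbation of the empirical mean parameter. Lemma~\ref{l_Sigma} supplies Assumption~\ref{asp_exp} with explicit constants $(c',c,v)$, and a union bound over the $n^2$ entries of $\widehat{\Sigma}_t - \Sigma_t^\star$ with deviation threshold of order $\sqrt{\log n/N_t}$ yields
$$\|\widehat{\Sigma}_t - \Sigma_t^\star\|_{\infty/\infty} \;\lesssim\; \kappa_3\sqrt{\log n/N_t} \qquad \text{with probability } 1-4n^{-15},$$
by tuning the hidden constant so that the tail budget is $4n^2\exp(-17\log n)=4n^{-15}$. A separate check -- again leveraging $N_t \ge \mathcal{N}_{\mathrm{g}}\log n$ -- shows this high-probability deviation is at most $\alpha$, so that $\widehat{\Sigma}_t$ falls inside the Lipschitz neighborhood of $\Sigma_t^\star$ and the $(L,\alpha)$-local Lipschitz property transports the deviation through $\widetilde{F}^*$: $\|\widetilde{F}^*(\widehat{\Sigma}_t) - \widetilde{F}^*(\Sigma_t^\star)\|_{\infty/\infty} \le L\,\|\widehat{\Sigma}_t - \Sigma_t^\star\|_{\infty/\infty}$.

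The final step is the triangle inequality
$$\|\Theta_t^\star - [\texttt{ST}_{\nu_t}(\widehat{\Sigma}_t)]^{-1}\|_{\infty/\infty} \;\le\; \Delta_t + \|\widetilde{F}^*(\widehat{\Sigma}_t) - \widetilde{F}^*(\Sigma_t^\star)\|_{\infty/\infty},$$
which combines the two $\sqrt{\log n/N_t}$-scale bounds into a single multiple of $\sqrt{\log n/N_t}$; carefully tracking the two prefactors ($2\kappa_1/\kappa_2$ from the bias, $4/\kappa_2^2$ times the concentration constant from the stochastic term) assembles into exactly $\mathcal{E}_{\mathrm{g}} = (10\kappa_1 + 2\kappa_3^{-1})/(5\kappa_2^2)$. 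The only genuinely delicate part of the argument is this constant bookkeeping: one must calibrate the constants in $\mathcal{N}_{\mathrm{g}}$ and the concentration threshold so that the Lipschitz-radius inclusion $\|\widehat{\Sigma}_t - \Sigma_t^\star\|_{\infty/\infty}\le \alpha$, the Lemma~\ref{lem_error_lip} precondition $\nu_t \le (\kappa_2/(20s_r))^{1/(1-r)}$, and the target failure probability $4n^{-15}$ are all consistent simultaneously. Everything else is routine.
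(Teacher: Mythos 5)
Your proposal follows essentially the same route as the paper's proof: it invokes Lemma~\ref{l_Sigma} to supply the concentration constants of Assumption~\ref{asp_exp}, Lemma~\ref{lem_error_lip} for the bias bound $\Delta_t$ and the $(L,\alpha)$-local Lipschitzness of $[\texttt{ST}_{\nu_t}(\cdot)]^{-1}$, and Proposition~\ref{lem_back_dev} (which you partly re-derive inline via the triangle inequality and Lipschitz transport) with $\tau=15$ to assemble the final bound. The constant bookkeeping you flag as delicate --- in particular that $N_t\geq \mathcal{N}_{\mathrm{g}}\log n$ only yields the precondition $\nu_t\leq(\kappa_2/(20s))^{1/(1-r)}$ of Lemma~\ref{lem_error_lip} up to an adjustment of absolute constants --- is an imprecision already present in the paper, whose own proof simply asserts that the sample-size condition is ``readily implied.''
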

\begin{sloppypar}
    \begin{proof}
	In order to apply Proposition~\ref{lem_back_dev}, we need to verify that $N_t\geq (1+\tau)/(c\min\{v^2,\alpha^2\})\log n$. This is readily implied by setting $\tau = 15$, and invoking the derived values for $c$, $v$, and $\alpha$ in Lemmas~\ref{l_Sigma} and~\ref{lem_error_lip} together with our choice of $\nu_t$ and the assumed lower bound on $N_t$. The result then follows from Proposition~\ref{lem_back_dev} and the provided upper bounds on $\Delta_t$ and $L$ in Lemma~\ref{lem_error_lip}.
\end{proof}
\end{sloppypar}
Given Lemma~\ref{lem_bound_gmrf}, we are ready to provide the sample complexity of~\ref{generalopt1} for GMRFs. 

\begin{theorem}[Sample Complexity of GMRFs]\label{thm_gmrf_sample}
	Suppose that a sequence of data samples $\left\{x_t^{(i)}\right\}_{i=1}^{N_t}$ are collected from the distribution~\eqref{gmrf} at every $t=0,\dots, T$. Suppose that Assumption~\ref{asp_bound} is satisfied and $\Sigma^\star_t$ is $(s,r)$-weakly sparse for some $s\geq 0$ and $0\leq r<1$.
	 Consider~\ref{generalopt1} with $\widetilde{F}^*(\widehat{\Sigma}_t) = [\texttt{ST}_{\nu_t}(\widehat{\Sigma}_t)]^{-1}$ and parameters $\nu_t \asymp \sqrt{\log n/N_t}$ and $\lambda_t\asymp \mathcal{E}_{\mathrm{g}}\sqrt{\log n/N_t}$. The following statements hold with probability of $1-4Tn^{-15}$:
\begin{itemize}
\item[-] {\bf Estimation error:} Suppose that $N_t\gtrsim \mathcal{N}_{\mathrm{g}}\log p$ for every $t=0,1,\dots,T$. For any $q\in \{0\}\cup [1,\infty)$, the solution of ~\ref{generalopt1} with a temporal $\ell_q$-regularizer satisfies
\begin{align}\label{eq_gmrf_est}
	\norm{\widehat{\Theta}_t-\Theta^\star_t}_{\infty/\infty}\lesssim \mathcal{E}_{\mathrm{g}}\sqrt{\frac{\log n}{N_t}},\qquad \text{ for every } t = 0,\dots,T.
\end{align}
\item[-] {\bf Sparsistency for smoothly-changing GMRFs.} Suppose that the GMRF is smoothly changing with parameters $(q,D), q\geq 1, D\geq 0$. Suppose that $N_t\gtrsim \left(\mathcal{N}_{\mathrm{g}}\vee (\mathcal{E}_{\mathrm{g}}/\bar{\Theta}_t)^2\right)\log n$ for every $t=0,\dots,T$. Then, the solution of ~\ref{generalopt1} with $0<\gamma<1/(1+D)$ and a temporal $\ell_q$-regularizer satisfies
\begin{align}\label{eq_gmrf_lq}
	\mathrm{supp}(\widehat{\Theta}_t) = \mathrm{supp}({\Theta}^\star_t), \qquad \forall t = 0,\dots,T.
\end{align}
\item[-] {\bf Sparsistency for sparsely-changing GMRFs.} Suppose that the GMRF is sparsely changing with parameter $D_0\geq 0$. Suppose that $N_t\gtrsim \left(\mathcal{N}_{\mathrm{g}}\vee (\mathcal{E}_{\texttt{g}}/{\Theta}^{\min}_t)^2\right)\log n$ for every $t=0,\dots,T$. Then, with any choice of $0<\gamma<1$, the optimal solution of~\ref{generalopt1} with temporal $\ell_0$-regularizer satisfies:
				\begin{equation}\label{eq_gmrf_l0}
					\begin{aligned}
					& \mathrm{supp}(\widehat{\Theta}_t) = \mathrm{supp}({\Theta}^\star_t), && \qquad \forall t = 0,\dots,T,\\
					& \mathrm{supp}(\widehat{\Theta}_t-\widehat{\Theta}_{t-1}) = \mathrm{supp}({\Theta}^\star_t-{\Theta}^\star_{t-1}), && \qquad \forall t = 1,\dots,T.
					\end{aligned}
				\end{equation}
\end{itemize}	
\end{theorem}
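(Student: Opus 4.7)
The plan is to reduce this theorem to Theorem~\ref{thm_constrained} by verifying its hypotheses using Lemma~\ref{lem_bound_gmrf}. The key observation is that Lemma~\ref{lem_bound_gmrf} already provides a per-time-step control on the backward mapping deviation with the prescribed choice of $\nu_t$. So the main task is to (i) promote this bound to a uniform statement over $t=0,\dots,T$ with the claimed probability, (ii) rewrite it as the feasibility assumption \eqref{asp_A1} with the appropriate choice of $\lambda_t$, and (iii) translate the minimum-signal conditions into the claimed lower bounds on $N_t$.

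First, I would apply Lemma~\ref{lem_bound_gmrf} at each fixed $t$. Since $N_t \gtrsim \mathcal{N}_{\mathrm{g}} \log n$, the lemma gives
\begin{align*}
\norm{\Theta_t^\star - [\texttt{ST}_{\nu_t}(\widehat{\Sigma}_t)]^{-1}}_{\infty/\infty} \leq \mathcal{E}_{\mathrm{g}}\sqrt{\frac{\log n}{N_t}}
\end{align*}
with probability at least $1 - 4n^{-15}$. Taking a union bound across the $T+1$ time points yields that this event holds simultaneously for all $t$ with probability at least $1 - 4Tn^{-15}$ (absorbing the constant factor from $T+1$ into $T$ for large $T$; the precise constant is not essential). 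Under this event, the choice $\lambda_t \asymp \mathcal{E}_{\mathrm{g}}\sqrt{\log n/N_t}$ verifies assumption \eqref{asp_A1} of Theorem~\ref{thm_constrained}, and then the estimation error claim \eqref{eq_gmrf_est} follows directly from \eqref{eq_error} in Theorem~\ref{thm_constrained}.

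For the two sparsistency statements, I would then verify the minimum-signal conditions of Theorem~\ref{thm_constrained} on the good event above. In the smoothly-changing case, the condition $2\lambda_t \leq \min_{i \in \mathcal{S}_t}|\theta_{t;i}^\star|$ reduces (up to constants) to $\mathcal{E}_{\mathrm{g}}\sqrt{\log n/N_t} \lesssim \bar{\Theta}_t$, which is precisely the lower bound $N_t \gtrsim (\mathcal{E}_{\mathrm{g}}/\bar{\Theta}_t)^2 \log n$ assumed in the theorem; combined with the condition $0 < \gamma < 1/(1+D)$, Theorem~\ref{thm_constrained} then yields \eqref{eq_gmrf_lq}. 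For the sparsely-changing case, I need the two conditions $2\lambda_t \leq \bar{\Theta}_t$ and $2\lambda_t + 2\lambda_{t-1} \leq \Delta\bar{\Theta}_t$; both are implied by $\lambda_t, \lambda_{t-1} \lesssim \Theta_t^{\min}$, and using $\lambda_t \asymp \mathcal{E}_{\mathrm{g}}\sqrt{\log n/N_t}$ this reduces exactly to $N_t \gtrsim (\mathcal{E}_{\mathrm{g}}/\Theta_t^{\min})^2 \log n$. The $\ell_0$-temporal regularizer conclusion \eqref{eq_gmrf_l0} then follows from the third bullet of Theorem~\ref{thm_constrained}.

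The argument is essentially bookkeeping: the genuine statistical work has already been done in Lemma~\ref{lem_bound_gmrf}, and the optimization work in Theorem~\ref{thm_constrained}. The only mild subtlety I anticipate is keeping track of the absolute constants hidden in the $\asymp$ and $\lesssim$ notation so that the minimum-signal conditions match the sample-size lower bounds exactly with the constants $\mathcal{E}_{\mathrm{g}}$, $\bar{\Theta}_t$, and $\Theta_t^{\min}$ as stated; beyond that, the proof is a direct composition of the two prior results together with a union bound.
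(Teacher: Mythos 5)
Your proposal is correct and follows exactly the route the paper takes: the paper's own proof is the one-line statement that the theorem is a direct consequence of Lemma~\ref{lem_bound_gmrf} and Theorem~\ref{thm_constrained}, and you have simply filled in the omitted bookkeeping (union bound over $t$, verification of \eqref{asp_A1} via the choice of $\lambda_t$, and translation of the minimum-signal conditions into the sample-size lower bounds).
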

\begin{proof}
The proof is a direct consequence of Lemma~\ref{lem_bound_gmrf} and Theorem~\ref{thm_constrained}. The details are omitted for brevity.
\end{proof}

\begin{remark}
    We note that Theorem~\ref{thm_gmrf_sample} \textit{does not} impose any condition on the weak sparsity parameter $s$ of the true covariance matrices; instead, it uses this parameter to control the sample complexity of our method. In other words, our theoretical result still holds for large values of $s$, provided that the number of samples scales accordingly.
\end{remark}

	\subsection{Approximate Backward Mapping for DMRFs}\label{subsec_dmrf}
	Suppose that at any given time $t$, a sequence of discrete data samples $\left\{x_t^{(i)}\right\}_{i=1}^{N_t}$ are collected from a DMRF with distribution~\eqref{dmrf}. Our next lemma shows that the empirical mean parameters---corresponding to the empirical node- and edge-wise marginal probabilities---satisfy Assumption~\ref{asp_exp}. 
	\begin{lemma}[Hoeffding's inequality]
	For every $1\leq i,j\leq p$ and $k,l\in\mathcal{K}$, define $\widehat{\mu}_{t;ik} = (1/N_t)\sum_{s=1}^{N_t}\mathbb{I}[x^{(s)}_{t;i} = k]$ and $\widehat{\mu}_{t;ijkl} = (1/N_t)\sum_{s=1}^{N_t}\mathbb{I}[x^{(s)}_{t;i} = k, x^{(s)}_{t;j} = l]$ as the empirical node- and edge-wise marginal probabilities. Then, for all $\widehat{\mu}_{t;ik}$ and $\widehat{\mu}_{t;ijkl}$, Assumption~\ref{asp_exp} holds with $c'=2$, $c = 2$, and $v = \infty$.
	\end{lemma}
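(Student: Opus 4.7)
The plan is to apply Hoeffding's inequality directly, since both the node-wise and edge-wise empirical marginals are averages of $N_t$ i.i.d. bounded indicator random variables. Recall the standard two-sided Hoeffding bound: if $Y_1, \ldots, Y_N$ are independent with $Y_s \in [a,b]$ almost surely and $\bar Y = (1/N)\sum_s Y_s$, then $\mathbb{P}(|\bar Y - \mathbb{E}[\bar Y]| \geq \delta) \leq 2\exp\!\left(-2N\delta^2/(b-a)^2\right)$ for every $\delta > 0$.

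First, consider the node-wise empirical marginal $\widehat{\mu}_{t;ik} = (1/N_t)\sum_{s=1}^{N_t} Y_s$ where $Y_s := \mathbb{I}[x^{(s)}_{t;i} = k]$. Since the samples $\{x^{(s)}_t\}_{s=1}^{N_t}$ are i.i.d.\ draws from~\eqref{dmrf}, the random variables $Y_s$ are i.i.d.\ Bernoulli-type indicators with $Y_s \in \{0,1\} \subseteq [0,1]$ and $\mathbb{E}[Y_s] = \mathbb{P}(x_{t;i} = k) = \mu^\star_{t;ik}$. Applying Hoeffding with $a=0$, $b=1$ yields
\[
\mathbb{P}\bigl(|\widehat{\mu}_{t;ik} - \mu^\star_{t;ik}| \geq \delta\bigr) \leq 2\exp(-2N_t\delta^2),
\]
for every $\delta > 0$, which gives $c'=2$, $c=2$, and, since the bound holds for all $\delta > 0$, we may take $v = \infty$.

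Next, consider the edge-wise empirical marginal $\widehat{\mu}_{t;ijkl} = (1/N_t)\sum_{s=1}^{N_t} Z_s$ where $Z_s := \mathbb{I}[x^{(s)}_{t;i} = k,\, x^{(s)}_{t;j} = l]$. Again, the $Z_s$ are i.i.d.\ $\{0,1\}$-valued indicators with $\mathbb{E}[Z_s] = \mathbb{P}(x_{t;i}=k, x_{t;j}=l) = \mu^\star_{t;ijkl}$, so the identical Hoeffding bound applies with the same constants.

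There is no real obstacle here; the only thing worth flagging is that one must be careful to note that although the different coordinates $\widehat{\mu}_{t;ik}$ and $\widehat{\mu}_{t;ijkl}$ indexed by $(i,k)$ and $(i,j,k,l)$ are certainly not mutually independent (they are derived from the same samples $x^{(s)}_t$), Assumption~\ref{asp_exp} is a coordinate-wise tail bound, so independence \emph{across} coordinates is not required --- only independence \emph{across samples} $s=1,\ldots,N_t$ for a fixed coordinate, which holds by the i.i.d.\ sampling assumption. The union-bounding across coordinates is handled separately in Proposition~\ref{lem_back_dev}.
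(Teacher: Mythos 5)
Your proof is correct and follows the same route as the paper's, which likewise observes that $\widehat{\mu}_{t;ik}$ and $\widehat{\mu}_{t;ijkl}$ are empirical averages of $\{0,1\}$-valued indicators and invokes Hoeffding's inequality directly to obtain $c'=2$, $c=2$, $v=\infty$. Your additional remark that Assumption~\ref{asp_exp} only requires independence across samples for a fixed coordinate (not across coordinates) is a correct and worthwhile clarification, though the paper leaves it implicit.
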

	\begin{proof}
	    Note that both $\widehat{\mu}_{t;ik}$ and $\widehat{\mu}_{t;ijkl}$ are empirical averages of binary random variables. Therefore, the proof follows from a direct application of Hoeffding's inequality; see e.g.~\cite[Proposition 2.5]{wainwright2019high}.
	\end{proof}
	\citet{bresler2014hardness} showed that, unlike GMRFs, obtaining the backward mapping of DMRFs is NP-hard, even if random variables are restricted to binary values. Therefore, one can only hope to provide an approximation of this backward mapping. A promising candidate for such approximation is the so-called \textit{tree-reweighted entropy} mapping
$\widetilde{F}^*$
	introduced by~\citet{wainwright2003tree}:
	\begin{align}\label{eq_tre}
		\left[\widetilde{F}^*_{\mathrm{trw}}(\widehat{\mu}_{t})\right]_{\phi} = \begin{cases}
			 \log \widehat{\mu}_{t;ik},
			 & \text{if}\quad \phi = ik\\
			 \rho_{t;ij}\log\left(\frac{\widehat{\mu}_{t;ijkl}}{\widehat{\mu}_{t;ik}\widehat{\mu}_{t;jl}}\right), 
			 & \text{if}\quad \phi = ijkl
		\end{cases}
	\end{align}
The weights $0\leq \rho_{t;ij}\leq 1$ are selected from the so-called \textit{spanning tree polytope}. 
For any spanning tree $\mathcal{T}_t$ of the Markov graph $\mathcal{G}_t$, one can set $\rho_{t;ij} = 1$ if $(i,j)\in \mathcal{T}_t$ and $\rho_{t;ij} = 0$ otherwise. However, due to the unknown nature of $\mathcal{G}_t$, its spanning trees are also unknown. To circumvent this issue, it is common to select $\rho_{t;lk} = 1$ for all tuples $(t,l,k)$~\citep{wainwright2002stochastic}. 

Due to the intractability of the backward mapping, in general one cannot expect the defined approximate backward mapping to be tight even if $\widehat{\mu}_t = \mu^\star_t$. This implies that, unlike GMRFs, it may not be possible to push $\Delta_t$ towards zero. Nonetheless, we will establish the local Lipschitzness of the above approximate backward mapping under the following mild condition.

\begin{assumption}\label{asp_lb}
	There exists $\mu_{\min}>0$ such that:
	\begin{itemize}
		\item We have $\mu^\star_{t;ik} = P(X_i=k)\geq \mu_{\min}$ for every $i,k$.
		\item We have $\mu^\star_{t;ijkl} = P(X_i=k, X_j=l)\geq \mu_{\min}$ for every $i,j,k,l$.
	\end{itemize}
\end{assumption}
The above assumption requires the density of the joint probability distribution to be strictly positive on every atom of the distribution. Given the above assumption, we next prove the local Lipschitzness of our defined approximate backward mapping.
\begin{lemma}\label{lem_dmrf_lip}
	Suppose that Assumption~\ref{asp_lb} is satisfied. Then, the approximate backward mapping~\eqref{eq_tre} is $(L,\alpha)$-locally Lipschitz with $L = 6/\mu_{\min}$ and $\alpha = \mu_{\min}/2$.
\end{lemma}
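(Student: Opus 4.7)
The plan is to exploit the explicit form of the tree-reweighted approximate backward mapping in \eqref{eq_tre} and reduce the Lipschitz bound to a standard Lipschitz estimate for the logarithm on an interval bounded away from zero. The choice $\alpha=\mu_{\min}/2$ is dictated exactly by the need to keep every coordinate of $\mu$ and $\mu'$ bounded below, so that $\log$ has a bounded derivative on the relevant range.

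First I would use Assumption~\ref{asp_lb} together with the definition of $\mathcal{B}_\infty(\mu^\star,\alpha)$ to argue that for every $\mu\in\mathcal{R}\cap\mathcal{B}_\infty(\mu^\star,\mu_{\min}/2)$, every coordinate satisfies $\mu_{ik}\ge \mu^\star_{ik}-\mu_{\min}/2\ge \mu_{\min}/2$ and $\mu_{ijkl}\ge \mu_{\min}/2$, and likewise for $\mu'$. Then for any two scalars $a,b\ge \mu_{\min}/2$, the mean value theorem gives $|\log a-\log b|\le (2/\mu_{\min})\,|a-b|$.

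Next I would split the bound on $\|\widetilde F^*_{\mathrm{trw}}(\mu)-\widetilde F^*_{\mathrm{trw}}(\mu')\|_\infty$ according to whether the coordinate $\phi$ is of node-type $ik$ or edge-type $ijkl$. For node-type coordinates, applying the previous bullet directly yields
\begin{equation*}
|\log\mu_{ik}-\log\mu'_{ik}|\le \frac{2}{\mu_{\min}}|\mu_{ik}-\mu'_{ik}|\le \frac{2}{\mu_{\min}}\|\mu-\mu'\|_\infty,
\end{equation*}
which is already within the claimed $L=6/\mu_{\min}$. For edge-type coordinates, I would expand the log of the ratio as
\begin{equation*}
\log\!\frac{\mu_{ijkl}}{\mu_{ik}\mu_{jl}}-\log\!\frac{\mu'_{ijkl}}{\mu'_{ik}\mu'_{jl}}=\bigl(\log\mu_{ijkl}-\log\mu'_{ijkl}\bigr)-\bigl(\log\mu_{ik}-\log\mu'_{ik}\bigr)-\bigl(\log\mu_{jl}-\log\mu'_{jl}\bigr),
\end{equation*}
apply the triangle inequality together with the scalar bound on each of the three log differences, and use $\rho_{t;ij}\in[0,1]$ to obtain a bound of $3\cdot(2/\mu_{\min})\|\mu-\mu'\|_\infty=(6/\mu_{\min})\|\mu-\mu'\|_\infty$. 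Taking the maximum over $\phi$ then gives the desired $(L,\alpha)$-local Lipschitz constant.

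There is no real obstacle here; the whole argument is essentially bookkeeping. The only subtle point is to verify that $\alpha=\mu_{\min}/2$ is large enough to be useful but small enough that the coordinates stay in the regime where $\log$ is Lipschitz with constant $2/\mu_{\min}$, and that the factor $3$ arising from the three log terms in the edge-type expansion is what produces the numerical constant $6$ in $L=6/\mu_{\min}$.
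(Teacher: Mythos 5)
Your proposal is correct and follows essentially the same route as the paper's proof: lower-bound every coordinate of $\mu,\mu'$ by $\mu_{\min}/2$ using Assumption~\ref{asp_lb} and the choice of $\alpha$, split the edge-type log-ratio into three scalar log differences via the triangle inequality (using $\rho_{t;ij}\le 1$), and bound each by $(2/\mu_{\min})\|\mu-\mu'\|_\infty$. The only cosmetic difference is that you invoke the mean value theorem for the scalar Lipschitz bound on $\log$, whereas the paper uses the inequality $\log(1+x)\le x$; both yield the same constant.
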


\begin{proof}
	Suppose that $\phi = ijkl$ is an arbitrary index. For any $\mu, \mu'\in\mathcal{B}_{\mu^\star_t}(\alpha)$, we have 
	\begin{align*}
		\left|\left[\widetilde{F}^*_{\mathrm{trw}}({\mu})\right]_{\phi}-\left[\widetilde{F}^*_{\mathrm{trw}}({\mu'})\right]_{\phi}\right| &= \left|\rho_{t;ij}\log\left(\frac{{\mu}_{ijkl}}{{\mu}_{ik}{\mu}_{jl}}\right)-\rho_{t;ij}\log\left(\frac{{\mu}'_{ijkl}}{{\mu}'_{ik}{\mu}'_{jl}}\right)\right|\nonumber\\
		&\leq \left|\log\left(\frac{\mu'_{ijkl}}{\mu_{ijkl}}\right)\right|+\left|\log\left(\frac{\mu'_{ik}}{\mu_{ik}}\right)\right|+\left|\log\left(\frac{\mu'_{jl}}{\mu_{jl}}\right)\right|
	\end{align*}
On the other hand, one can write
\begin{align*}
	\left|\log\left(\frac{\mu'_{ijkl}}{\mu_{ijkl}}\right)\right|&\leq \left|\log\left(\frac{\min\{\mu'_{ijkl},\mu_{ijkl}\}+|\mu'_{ijkl}-\mu_{ijkl}|}{\min\{\mu'_{ijkl},\mu_{ijkl}\}}\right)\right|\\
	&\leq \left|\log\left(1+\frac{|\mu'_{ijkl}-\mu_{ijkl}|}{\min\{\mu'_{ijkl},\mu_{ijkl}\}}\right)\right|\\
	&\leq \frac{|\mu'_{ijkl}-\mu_{ijkl}|}{\min\{\mu'_{ijkl},\mu_{ijkl}\}}\\
	&\leq \left({2}/{\mu_{\min}}\right)|\mu'_{ijkl}-\mu_{ijkl}|
\end{align*}
where the third inequality follows from $\log(1+x)\leq x$ for any $x>-1$ and the last inequality is due to $\mu, \mu'\in\mathcal{B}_{\mu^\star_t}(\alpha)$ and our choice of $\alpha$.  Similarly, one can show that $\left|\log\left({\mu'_{ik}}/{\mu_{ik}}\right)\right| \leq \left({2}/{\mu_{\min}}\right)|\mu'_{ik}-\mu_{ik}|$ and $\left|\log\left({\mu'_{jl}}/{\mu_{jl}}\right)\right| \leq \left({2}/{\mu_{\min}}\right)|\mu'_{jl}-\mu_{jl}|$. This implies that 
$$
\left|\left[\widetilde{F}^*_{\mathrm{trw}}({\mu})\right]_{ijkl}-\left[\widetilde{F}^*_{\mathrm{trw}}({\mu'})\right]_{ijkl}\right|\leq (6/\mu_{\min})\cdot\max\{|\mu'_{ijkl}-\mu_{ijkl}|, |\mu'_{ik}-\mu_{ik}|, |\mu'_{jl}-\mu_{jl}|\}
$$
Similarly, for an arbitrary index $\phi = ik$, we have $\left|\left[\widetilde{F}^*_{\mathrm{trw}}({\mu})\right]_{\phi}-\left[\widetilde{F}^*_{\mathrm{trw}}({\mu'})\right]_{\phi}\right|\leq (2/\mu_{\min})|\mu'_{ik}-\mu_{ik}|$.
This implies that
$$
\norm{\widetilde{F}^*_{\mathrm{trw}}({\mu})-\widetilde{F}^*_{\mathrm{trw}}({\mu'})}_\infty\leq (6/\mu_{\min})\norm{\mu-\mu'}_\infty,
$$
thereby completing the proof.
\end{proof}
Lemma~\ref{lem_dmrf_lip} combined with Proposition~\ref{lem_back_dev} leads to the following concentration bound on the backward mapping deviation.
\begin{sloppypar}
	\begin{lemma}\label{lem_dmrf_backward_dev}
		Suppose that Assumption~\ref{asp_lb} is satisfied. Moreover, suppose that $N_t\geq (16/\mu_{\min}^2)\log p$. Then, we have
		\begin{align}\label{eq_backward_dmrf}
			\norm{\theta_t^\star-\widetilde{F}^*_{\mathrm{trw}}(\widehat{\mu}_t)}_\infty\leq \Delta_t+\frac{12}{\mu_{\min}}\sqrt{\frac{\log p}{N_t}},\qquad \text{ with probability of $1-2p^{-8}$.}
		\end{align}
	\end{lemma}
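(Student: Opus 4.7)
The proof is essentially a direct plug-and-chug into the general backward mapping deviation result of Proposition~\ref{lem_back_dev}, using the two ingredients we have already established for DMRFs: the Hoeffding-based concentration of the empirical marginals (giving constants $c=2$, $c'=2$, $v=\infty$ in Assumption~\ref{asp_exp}) and the local Lipschitzness of $\widetilde{F}^*_{\mathrm{trw}}$ from Lemma~\ref{lem_dmrf_lip} (with $L=6/\mu_{\min}$ and $\alpha=\mu_{\min}/2$).

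The plan is as follows. First, I would choose the tail parameter $\tau=8$ in Proposition~\ref{lem_back_dev}, so that the failure probability $c' p^{-\tau}$ becomes exactly $2p^{-8}$. Second, I would verify the sample-size precondition $N_t \geq \frac{1+\tau}{c\min\{v^2,\alpha^2\}}\log p$; substituting $c=2$, $v=\infty$, $\alpha=\mu_{\min}/2$, this reduces to a condition of the form $N_t \gtrsim \mu_{\min}^{-2}\log p$, which is implied by the hypothesis $N_t \geq (16/\mu_{\min}^2)\log p$ (after absorbing constants). Third, I would apply Proposition~\ref{lem_back_dev} to conclude
\[
\norm{\theta_t^\star-\widetilde{F}^*_{\mathrm{trw}}(\widehat{\mu}_t)}_\infty \leq \Delta_t + \sqrt{\frac{(1+\tau)L^2}{c}\cdot\frac{\log p}{N_t}}.
\]
Finally, substituting $L=6/\mu_{\min}$, $c=2$, and $\tau=8$ into the right-hand side yields a constant factor of $\sqrt{9\cdot 36/2}/\mu_{\min}$, which is bounded by $12/\mu_{\min}$ after the standard rounding of constants, giving exactly the stated bound.

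There is no real obstacle here: both the Hoeffding concentration and the local Lipschitz constant are already in hand from the preceding two lemmas, and Proposition~\ref{lem_back_dev} is tailor-made to combine them. The only mild bookkeeping point is to check that the sample-size hypothesis $N_t\geq (16/\mu_{\min}^2)\log p$ is strong enough both to trigger the local Lipschitz regime (i.e., to guarantee $\norm{\widehat\mu_t - \mu_t^\star}_\infty\leq \alpha=\mu_{\min}/2$ with the claimed probability) and to justify the chosen tail exponent $\tau=8$; this is where the specific constant $16$ in the hypothesis is chosen to line up with the constants in Proposition~\ref{lem_back_dev} and yield a clean $2p^{-8}$ failure probability.
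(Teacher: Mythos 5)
Your proposal is exactly the paper's argument: the paper's proof is a one-line invocation of Lemma~\ref{lem_dmrf_lip} together with Proposition~\ref{lem_back_dev}, which is precisely the combination you carry out. The only wrinkle is constant bookkeeping: with $\tau=8$ the required sample size is $\tfrac{9}{2(\mu_{\min}/2)^2}\log p = \tfrac{18}{\mu_{\min}^2}\log p$ and the error constant is $\sqrt{9\cdot 36/2}/\mu_{\min}\approx 12.73/\mu_{\min}$, whereas $\tau=7$ reproduces the stated $16/\mu_{\min}^2$ and $12/\mu_{\min}$ exactly (at the cost of probability $1-2p^{-7}$) --- a harmless mismatch that is already present in the paper's own statement.
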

\end{sloppypar}
\begin{proof}
	The proof immediately follows from Lemma~\ref{lem_dmrf_lip} and Proposition~\ref{lem_back_dev}.
\end{proof}
Lemma~\ref{lem_dmrf_backward_dev} implies that the backward mapping deviation can be upper bounded with $2\Delta_t$, so long as $N_t\geq \left(12/(\mu_{\min}\Delta_t)\right)^2\log p$. The above result implies that, with logarithmic sample size, the backward mapping deviation stays in the order of the approximation error $\Delta_t$. 
Equipped with the above lemma, we are ready to present the sample complexity of \ref{generalopt1} for DMRFs. Similar to GMRFs, we define the following quantities:
\begin{align*}
    \bar\theta_t := \min_{(i,j)\in\mathcal{S}_t}|\theta^\star_{t;ij}|,\quad &\Delta\bar\theta_t := \min_{(i,j)\in\mathcal{D}_t}|\theta^\star_{t;ij}-\theta^\star_{t-1;ij}|,\quad  \theta_t^{\min} := \begin{cases}
    \min\left\{\bar\theta_t, \frac{\Delta\bar\theta_t}{2}, \frac{\Delta\bar\theta_{t-1}}{2}\right\} & t\geq 1\\
    \min\left\{\bar\theta_t, \frac{\Delta\bar\theta_t}{2}\right\} & t=0
    \end{cases}
\end{align*}

\begin{theorem}[Sample Complexity of DMRFs]\label{thm_dmrf_sample}
	Suppose that a sequence of data samples $\left\{x_t^{(i)}\right\}_{i=1}^{N_t}$ are collected from the distribution~\eqref{dmrf} at every $t=0,\dots, T$. Suppose that Assumption~\ref{asp_lb} is satisfied. Consider~\ref{generalopt1} with the backward mapping $\widetilde{F}_{\mathrm{trw}}^*$ defined as~\eqref{eq_tre} and parameter $\lambda_t\asymp \Delta_t+\frac{1}{\mu_{\min}}\sqrt{\frac{\log p}{N_t}}$. The following statements hold with probability of $1-2Tp^{-8}$:
\begin{itemize}
\item[-] {\bf Estimation error:} Suppose that $N_t\gtrsim \mu_{\min}^{-2}\log p$ for every $t = 0,\dots, T$. For any $q\in \{0\}\cup [1,\infty)$, the solution of ~\ref{generalopt1} with a temporal $\ell_q$-regularizer satisfies
\begin{align*}
			\norm{\widehat{\theta}_t-\theta^\star_t}_{\infty}\lesssim \Delta_t+\frac{1}{\mu_{\min}}\sqrt{\frac{\log p}{N_t}},\qquad \text{ for every } t = 0,\dots,T.
\end{align*}
\item[-] {\bf Sparsistency for smoothly-changing DMRFs.} Suppose that the time-varying DMRF is smoothly changing with parameters $(q,D), q\geq 1, D\geq 0$. Suppose that $\Delta_t\leq \bar\theta_t/2$ and $N_t\gtrsim (\mu_{\min}\cdot\bar\theta_t)^{-2}\log p$ for every $t = 0,\dots, T$. Then, the solution of ~\ref{generalopt1} with $0<\gamma<1/(1+D)$ and a temporal $\ell_q$-regularizer satisfies
\begin{align}\label{eq_dmrf_lq}
	\mathrm{supp}\left(\widehat{\theta}_t\right) = \mathrm{supp}\left({\theta}^\star_t\right), \qquad \forall t = 0,\dots,T.
\end{align}
\item[-] {\bf Sparsistency for sparsely-changing DMRFs.} Suppose that the time-varying DMRF is sparsely changing with parameter $D_0\geq 0$. Suppose that $\Delta_t\leq \theta^{\min}_t/2$ and $N_t\gtrsim (\mu_{\min}\cdot\theta^{\min}_t)^{-2}\log n$ for every $t = 0,1,\dots, T$. Then, with any choice of $0<\gamma<1$, the optimal solution of~\ref{generalopt1} with temporal $\ell_0$-regularizer satisfies:
				\begin{equation}\label{eq_dmrf_l0}
					\begin{aligned}
					& \mathrm{supp}\left(\widehat{\theta}_t\right) = \mathrm{supp}({\theta}^\star_t), && \qquad \forall t = 0,\dots,T,\\
					& \mathrm{supp}\left(\widehat{\theta}_t-\widehat{\theta}_{t-1}\right) = \mathrm{supp}({\theta}^\star_t-{\theta}^\star_{t-1}), && \qquad \forall t = 1,\dots,T.
					\end{aligned}
				\end{equation}
\end{itemize}
\end{theorem}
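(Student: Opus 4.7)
The proposed proof is an almost mechanical combination of Lemma~\ref{lem_dmrf_backward_dev} with Theorem~\ref{thm_constrained}, so the plan is to make sure the hypotheses of the latter are verified under the sample-size conditions stated for each part. First I would apply Lemma~\ref{lem_dmrf_backward_dev} at each individual time $t\in\{0,\dots,T\}$ to get
\[
\norm{\theta_t^\star-\widetilde{F}^*_{\mathrm{trw}}(\widehat{\mu}_t)}_\infty\leq \Delta_t+\frac{12}{\mu_{\min}}\sqrt{\frac{\log p}{N_t}},
\]
with probability at least $1-2p^{-8}$ (requiring $N_t\gtrsim \mu_{\min}^{-2}\log p$). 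A union bound over $t$ yields simultaneous control at all time steps with probability at least $1-2Tp^{-8}$, which is precisely the probability stated in the theorem. This is the only place where probability enters the argument; everything else is deterministic.

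Under this event, the choice $\lambda_t\asymp \Delta_t+\mu_{\min}^{-1}\sqrt{\log p/N_t}$ (with a sufficiently large implicit constant) guarantees that $\norm{\theta_t^\star-\widetilde{F}^*_{\mathrm{trw}}(\widehat{\mu}_t)}_\infty\leq \lambda_t$ for all $t$, so assumption \eqref{asp_A1} of Theorem~\ref{thm_constrained} holds. Invoking the estimation-error part of that theorem immediately gives $\norm{\widehat{\theta}_t-\theta^\star_t}_\infty\leq 2\lambda_t\lesssim \Delta_t+\mu_{\min}^{-1}\sqrt{\log p/N_t}$, which is the first bullet.

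For the smoothly-changing case, I would verify the condition $2\lambda_t\leq \min_{i\in\mathcal{S}_t}|\theta^\star_{t;i}|=\bar\theta_t$ needed by Theorem~\ref{thm_constrained}. Using the assumption $\Delta_t\leq \bar\theta_t/2$ (absorbing constants) and the sample-size condition $N_t\gtrsim (\mu_{\min}\bar\theta_t)^{-2}\log p$, the stochastic term $\mu_{\min}^{-1}\sqrt{\log p/N_t}$ is at most a constant multiple of $\bar\theta_t$, so the minimum-signal condition is met. Combined with $0<\gamma<1/(1+D)$, Theorem~\ref{thm_constrained} then yields $\supp(\widehat{\theta}_t)=\supp(\theta^\star_t)$.

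For the sparsely-changing case, both the per-time minimum-signal condition $2\lambda_t\leq \bar\theta_t$ and the differential condition $2\lambda_t+2\lambda_{t-1}\leq \Delta\bar\theta_t$ must be verified. By definition, $\theta_t^{\min}\leq \bar\theta_t$ and $\theta_t^{\min}\leq \Delta\bar\theta_t/2$ (and similarly for $t-1$), so the hypotheses $\Delta_t\leq \theta_t^{\min}/2$ and $N_t\gtrsim (\mu_{\min}\theta_t^{\min})^{-2}\log p$ give $\lambda_t\lesssim \theta_t^{\min}$, which simultaneously implies $2\lambda_t\leq \bar\theta_t$ and $2\lambda_t+2\lambda_{t-1}\leq 4\max\{\theta_t^{\min},\theta_{t-1}^{\min}\}\leq \Delta\bar\theta_t$ (again absorbing the constants into the implicit $\gtrsim$). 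The sparsely-changing sparsistency conclusion of Theorem~\ref{thm_constrained} then yields both claims in \eqref{eq_dmrf_l0}. There is no real obstacle here; the main care required is bookkeeping of the constants hidden in $\asymp$ and $\gtrsim$ so that $\lambda_t$ dominates the backward-mapping deviation while being small enough relative to $\bar\theta_t$, $\Delta\bar\theta_t$, and $\theta_t^{\min}$ in each regime.
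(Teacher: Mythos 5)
Your proposal is correct and follows exactly the route the paper intends: the paper's own proof is a one-line appeal to Lemma~\ref{lem_dmrf_backward_dev} and Theorem~\ref{thm_constrained}, and you have simply filled in the omitted details (union bound over $t$, verification of \eqref{asp_A1}, and the minimum-signal checks for each sparsistency regime). The only caveats are constant-level and index-level bookkeeping in $\theta_t^{\min}$ and the factor of $2$ in $2\lambda_t+2\lambda_{t-1}$, which you explicitly acknowledge and which are absorbed by the $\asymp$ and $\gtrsim$ notation at the same level of rigor as the paper itself.
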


\begin{proof}
	The proof is a direct consequence of Lemma~\ref{lem_dmrf_backward_dev} and Theorem~\ref{thm_constrained}. The details are omitted for brevity.
\end{proof}
According to Theorem~\ref{thm_dmrf_sample}, the exact sparsity recovery is only possible if the approximation error of the backward mapping $\Delta_t$ is small. 

\subsection{Kernel Averaging}\label{sec:kernelAvg}
In most realistic time-varying MRFs, the underlying graphical model changes continuously over time as the samples continue to arrive. For instance, in financial markets, the underlying stock correlation network may change quickly in response to global events. Therefore, a stock holder needs to identify the sharp changes in the market and rebalance their portfolio ``on the go''~\citep{talih2005structural, hallac2017network}. 
Evidently, in these applications, the sample size $N_t$ may be significantly smaller than what is required for \ref{generalopt1} to provide a reliable estimation of the canonical parameters.

To address the scarcity of data in this setting, we leverage and combine the information provided by the samples over time. In particular, we propose to replace the empirical mean parameters with their weighted averages over time, where the weights are obtained from a nonparametric kernel $K:\mathbb{R}\to\mathbb{R}_+$. Without loss of generality,  suppose that $N_t = 1$\footnote{If $N_t > 1$, the sufficient statistics $\phi(x_s)$ in~\eqref{eq_ker} can be replaced by $\frac{1}{N_t}\sum_{i=1}^{N_t}\phi(x^{(i)}_s)$.} for every $t=0,\dots,T$ and consider the weighted mean parameter
\begin{align}\label{eq_ker}
	\widehat{\mu}^{\mathrm{ker}}_t = \sum_{s=0}^Tw(s,t)\phi(x_s),\quad \text{where }\  w(s,t) = \frac{1}{Th}K\left(\frac{s-t}{Th}\right).
\end{align}
Here, $K(\cdot)$ is a symmetric nonnegative kernel that satisfies a set of mild conditions which will be delineated later. Moreover, the parameter $h$ is the \textit{bandwidth} of the kernel, controlling the decay rate of the weights.  The key insight behind kernel averaging is simple: at any given time $t$, we estimate the mean parameters by taking the weighted average of the samples over time, where the weights are obtained from a kernel that assigns smaller weights to samples that are temporally farther away from $t$. The idea of kernel averaging (also known as \textit{kernel smoothing}) dates back to late 60s~\citep{epanechnikov1969non, altman1992introduction, wand1994kernel}, and has been recently used in graphical model inference~\citep{greenewald2017time, zhou2010time}.

To ensure that such weighted averaging is statistically consistent, we must assume that its true counterpart changes slowly over time to ensure that the samples collected over time can be used to reveal partial information about the true mean parameter at time $t$. More precisely, our hope is to guarantee that $\sum_{s=0}^Tw(s,t)\phi(x_s)\approx \mathbb{E}[\phi(x_t)] = \mu^\star_t$. To formalize this intuition, we assume that $\mu^\star_t = \mu(t/T)$, for some twice continuously differentiable $\mu(x):[0,1]\to \mathbb{R}^n$ that satisfies the following assumption:\footnote{A function can be differentiable only on open sets. Without loss of generality and with a slight abuse of notation, we replace the derivatives of $\mu(x)$ at $x=0$ and $x=1$ with their right and left derivatives, respectively.}
\begin{assumption}[Bounded derivatives]\label{asp_bounded_der}
	There exist constants $0\leq \Gamma_0, \Gamma_1,\Gamma_2<\infty$ such that $\norm{\mu(x)}_\infty\leq \Gamma_0$, $\norm{\mu'(x)}_\infty\leq \Gamma_1$, and $\norm{\mu''(x)}_\infty\leq \Gamma_2$, for every $0\leq x\leq 1$. 
\end{assumption}
The above assumption implies that the function $\mu$ and its first derivative change smoothly. 
\begin{assumption}[Kernel properties]\label{asp_kernel}
	The kernel $K(x)$ satisfies the following conditions:
	\begin{itemize}
		\item[-] $\int_{-1}^{1} K(x)dx=1$;
		\item[-] There exist constants $K_l, K_u,K_1,K_2<\infty$ such that:
		\begin{align*}
		    0<K_l\leq K(x/h) \leq K_u, \quad |K'(x/h)| \leq K_1\cdot h^{-1}, \quad |K''(x/h)| \leq K_2\cdot h^{-2},
		\end{align*}
		for every $-h\leq x\leq h$ and $0<h\leq 1$. 
	\end{itemize}
\end{assumption} 
The above assumption holds for most standard kernels. For instance, it is easy to verify that the uniform kernel $K(x) = 1/2$ with domain $[-1,1]$ satisfies Assumption~\ref{asp_kernel} with parameters $K_l = K_u = 1/2$ and $K_1 = K_2 = 0$. As another example, consider the truncated Gaussian kernel $K(x) = C e^{-x^2/2}$, where $C = (\Phi(1)-\Phi(-1))^{-1}\approx 1.465$ is the normalizer of the kernel and $\Phi$ is the CDF of the normal distribution. Simple calculation reveals that this kernel satisfies Assumption~\ref{asp_kernel} with parameters $K_l = Ce^{-1/2}$, $K_u = K_1 = C$ and $K_2 = 2C$. 

Given the above two assumptions, our next goal is to derive a generalization of Proposition~\ref{lem_back_dev} to the kernel averaged empirical mean parameters. Recall that Proposition~\ref{lem_back_dev} relies on an upper bound on $\norm{\widehat{\mu}_t-{\mu}^\star_t}_\infty$. With kernel averaging, the empirical mean parameter $\widehat{\mu}_t$ is replaced with $\widehat{\mu}_t^{\mathrm{ker}}$. As a result, we need to obtain a concentration bound on the deviation of $\widehat{\mu}_t^{\mathrm{ker}}$ from $\mu^\star_t$. To this goal, we write
\begin{align*}
    \norm{\widehat{\mu}_t^{\mathrm{ker}}-\mu^\star_t }_\infty\leq \underbrace{\left\|\mu^\star_t - \mathbb{E}[\widehat{\mu}_t^{\mathrm{ker}}]\right\|_{\infty}}_{\text{kernel error}}+\underbrace{\left\|\widehat{\mu}_t^{\mathrm{ker}} - \mathbb{E}[\widehat{\mu}_t^{\mathrm{ker}}]\right\|_{\infty}}_{\text{deviation from kernel mean}}
\end{align*}
Our next two lemmas provide upper bounds on the kernel error and the deviation of kernel mean.
\begin{lemma}[Kernel error]\label{lem_ker_exp}
	Under Assumptions~\ref{asp_bounded_der} and~\ref{asp_kernel}, we have
	\begin{align}
		\left\|\mu^\star_t - \mathbb{E}[\widehat{\mu}_t^{\mathrm{ker}}]\right\|_{\infty}\leq \frac{\Gamma_2}{4}\cdot h^2+{\Gamma_1}\cdot h+\frac{\Gamma_2K_u}{24 T^2}\cdot h^{-1}+\frac{\Gamma_1K_1}{12 T^2}\cdot h^{-3}+\frac{\Gamma_0K_2}{24 T^2}\cdot h^{-5}
	\end{align}
\end{lemma}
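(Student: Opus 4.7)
My plan is to decompose $\mu^\star_t - \mathbb{E}[\widehat{\mu}_t^{\mathrm{ker}}]$ into a ``kernel bias'' arising from the continuous integral approximation and a ``quadrature error'' measuring the gap between the Riemann sum and that integral. Specifically, letting
\[
I_t := \int_{0}^{T} \tfrac{1}{Th}\,K\!\left(\tfrac{y-t}{Th}\right) \mu(y/T)\, dy = \int_{\max\{-1,-t/(Th)\}}^{\min\{1,(T-t)/(Th)\}} K(v)\, \mu(t/T + vh)\, dv
\]
via the substitution $v=(y-t)/(Th)$, I will write $\mu^\star_t - \mathbb{E}[\widehat{\mu}_t^{\mathrm{ker}}] = (\mu^\star_t - I_t) + (I_t - \mathbb{E}[\widehat{\mu}_t^{\mathrm{ker}}])$ and bound each piece independently.

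For the kernel bias $\mu^\star_t - I_t$, in the interior regime $h \leq t/T \leq 1-h$ the integration range in $v$ is the full symmetric interval $[-1,1]$, so $\int K(v)\,dv = 1$ and the odd moment $\int v\,K(v)\,dv$ vanishes. A second-order Taylor expansion of $\mu(t/T + vh)$ around $t/T$ then kills the linear term, and the remainder is controlled by $\tfrac{\Gamma_2 h^2}{2}\int v^2 K(v)\,dv \leq \tfrac{\Gamma_2 h^2}{4}$ (using $v^2 \leq 1$ together with the slack from the pdf normalization). In the boundary regime $t/T < h$ or $t/T > 1-h$, the truncated integration range is asymmetric, so $\int v K(v)\,dv$ no longer vanishes; applying a first-order Taylor expansion instead and bounding $|\mu'|\leq \Gamma_1$ yields the additional $\Gamma_1 h$ contribution.

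For the quadrature error $I_t - \mathbb{E}[\widehat{\mu}_t^{\mathrm{ker}}]$, set $g(y):=\tfrac{1}{Th}K((y-t)/(Th))\mu(y/T)$ and apply the Euler--Maclaurin formula to $\sum_{s=0}^{T}g(s) - \int_0^T g(y)\,dy$. The boundary contributions $\tfrac{1}{2}(g(0)+g(T))$ and $\tfrac{1}{12}(g'(T)-g'(0))$ vanish in the interior regime (since the kernel is supported on $|y-t|\le Th$) and are absorbed into the boundary bias in the edge regime. The remainder is bounded by $\tfrac{T}{24}\max_y |g''(y)|$, so it only remains to control $|g''|$. Applying the product and chain rules, interpreting Assumption~\ref{asp_kernel} so that $|K^{(k)}|$ evaluated at $(y-t)/(Th)$ is bounded by $K_k h^{-k}$, and using $|\mu^{(k)}|\leq \Gamma_k$, a direct computation gives
\[
|g''(y)| \;\leq\; \tfrac{K_2 \Gamma_0}{T^3 h^5} + \tfrac{2 K_1 \Gamma_1}{T^3 h^3} + \tfrac{K_u \Gamma_2}{T^3 h}.
\]
Multiplying by $T/24$ and combining with the factor of two on the cross-term produces the three $T^{-2}$ summands with coefficients $\tfrac{1}{24 h^5},\tfrac{1}{12 h^3},\tfrac{1}{24 h}$ as displayed.

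The main obstacle is the boundary regime: once $t/T$ is within $h$ of $0$ or $1$, one simultaneously loses the symmetry that kills the first-order Taylor term and picks up nonzero Euler--Maclaurin boundary values. The cleanest route is to treat the boundary pieces uniformly, always expressing $I_t$ as the truncated integral $\int_{v_-}^{v_+}K(v)\mu(t/T+vh)\,dv$, so that the asymmetric truncation error is folded into a single $\Gamma_1 h$ term. After this care, the discretization step proceeds identically in both regimes since $g$ and $g'$ continue to vanish at the endpoints $0,T$ whenever the kernel support lies inside $[0,T]$, and are otherwise dominated by the boundary-bias bound already recorded.
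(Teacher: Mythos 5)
Your proposal is correct and follows essentially the same route as the paper: the same decomposition into a kernel-bias term and a Riemann-sum/quadrature term, the same second-derivative computation of the weighted integrand yielding the three $T^{-2}$ coefficients, and the same $\tfrac{\Gamma_2}{4}h^2+\Gamma_1 h$ bias bound (which the paper simply imports from Lemma~14 of \citet{zhou2010time} rather than re-deriving via Taylor expansion as you sketch). The only substantive caveat is that your derivation of the $\Gamma_2 h^2/4$ constant implicitly needs $\int_{-1}^{1} v^2K(v)\,dv\le 1/2$, and your boundary-regime treatment glosses over the loss of kernel mass near $t/T\in\{0,1\}$; both points are inherited from, and hidden inside, the citation in the paper's own proof, so they do not distinguish your argument from theirs.
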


\begin{proof}
	One can write
	\begin{align*}
		\left\|\mu^\star_t - \mathbb{E}[\widehat{\mu}_t^{\mathrm{ker}}]\right\|_{\infty}\leq& \underbrace{\left\|\mu^\star_t - \int_{0}^{1}\frac{1}{h}K\left(\frac{x-t/T}{h}\right)\mu(x)dx\right\|_{\infty}}_{\mathcal{A}}\\
		&+\underbrace{\left\|\int_{0}^{1}\frac{1}{h}K\left(\frac{x-t/T}{h}\right)\mu(x)dx-\mathbb{E}[\widehat{\mu}_t^{\mathrm{ker}}]\right\|_{\infty}}_{\mathcal{B}}
	\end{align*}
According to \citet[Lemma 14]{zhou2010time} we have $\mathcal{A}\leq \frac{\Gamma_2}{4}\cdot h^2+{\Gamma_1}\cdot h$. Therefore, it remains to control $\mathcal{B}$. To this goal, define $f(x):=\frac{1}{h}K\left(\frac{x-t/T}{h}\right)\mu(x)$. Note that 
\begin{align*}
	\mathbb{E}[\widehat{\mu}_t^{\mathrm{ker}}] = \frac{1}{T}\sum_{s=0}^T\frac{1}{h}K\left(\frac{s-t}{Th}\right)\mu\left(\frac{s}{T}\right) = \frac{1}{T}\sum_{s=0}^Tf\left(\frac{s}{T}\right).
\end{align*}
This implies that $\mathbb{E}[\widehat{\mu}_t^{\mathrm{ker}}]$ is the Riemann approximation of $\int_{0}^1 f(x)dx$. Therefore, we have 
\begin{align}\label{riemann_bound}
	\mathcal{B} = \left\|\int_{0}^{1}f(x)dx - \frac{1}{T}\sum_{s=0}^Tf\left(\frac{s}{T}\right)\right\|_\infty\leq \frac{\max_{0\leq x\leq 1}\|f''(x)\|_\infty}{24T^2}
\end{align}
where the second inequality follows from a standard bound  on the Riemann sum~\citep{hughes2020calculus}. Next, we provide an upper bound on $\max_{0\leq x\leq 1}\|f''(x)\|_\infty$. It is easy to verify that
\begin{align}\label{eq_f_secondder}
	f''(x) = h^{-3}\cdot K''\left(\frac{x-t/T}{h}\right)\mu(x)+2h^{-2}\cdot K'\left(\frac{x-t/T}{h}\right)\mu'(x)+h^{-1}K\left(\frac{x-t/T}{h}\right)\mu''(x),
\end{align}
Therefore, with Assumption~\ref{asp_kernel}, we have
\begin{align*}
	\|f''(x)\|_\infty\leq h^{-5}\cdot K_2\Gamma_0+2h^{-3}\cdot K_1\Gamma_1+h^{-1}\cdot K_u\Gamma_2.
\end{align*}
The above inequality combined~\eqref{riemann_bound} completes the proof.
\end{proof}

\begin{lemma}[Deviation from kernel mean]\label{lem_ker_dev}
Suppose that Assumptions~\ref{asp_bounded_der} and~\ref{asp_kernel} are satisfied. Moreover, suppose that $T\geq (\tau+1)(K_u/K_l)^4\log p$ for an arbitrary $\tau>0$. Then, we have
\begin{align*}
\norm{\widehat{\mu}_t^{\mathrm{ker}}-\mathbb{E}[\widehat{\mu}_t^{\mathrm{ker}}]}_\infty\leq \sqrt{\frac{(\tau+1)K_u^2\log p}{cTh^2}},\quad \text{with probability}\ 1-c'p^{-\tau}
\end{align*}
\end{lemma}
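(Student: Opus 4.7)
Fix an arbitrary coordinate $i\in[p]$ and write the deviation as a weighted sum of independent centered random variables,
$\widehat{\mu}^{\mathrm{ker}}_{t;i}-\mathbb{E}[\widehat{\mu}^{\mathrm{ker}}_{t;i}]=\sum_{s=0}^{T}w(s,t)\,Z_{s,i}$, where $Z_{s,i}:=\phi(x_s)_i-\mathbb{E}[\phi(x_s)_i]$. Each $Z_{s,i}$ is centered and, by Assumption~\ref{asp_exp} applied with $N_t=1$, has a (locally) sub-Gaussian tail governed by the constant $c$. The plan is to apply a Bernstein-type concentration inequality to each coordinate, then conclude with a union bound over $i=1,\dots,p$.

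The key ingredients are sharp bounds on the kernel weights $w(s,t)=\tfrac{1}{Th}K\!\left(\tfrac{s-t}{Th}\right)$. Since the normalization $\int_{-1}^{1}K(x)\,dx=1$ implies $K$ is supported in $[-1,1]$, we have $w(s,t)\neq 0$ only for $|s-t|\leq Th$, so at most $2Th+1$ weights are nonzero. Combining this with Assumption~\ref{asp_kernel} yields $w_{\max}:=\max_{s}w(s,t)\leq K_u/(Th)$ and $V:=\sum_{s}w(s,t)^2\lesssim K_u^2/(Th^2)$ from the pointwise upper bound, together with the matching lower bound $V\gtrsim K_l^2/(Th^2)$ obtained by inserting $K(\cdot)\geq K_l$ on the $\Theta(Th)$ active indices.

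With these in hand, the Bernstein-type tail bound for independent sub-Gaussian (or locally sub-exponential) summands gives
$\mathbb{P}\!\left(\bigl|\sum_{s}w(s,t)Z_{s,i}\bigr|\geq \delta\right)\leq 2\exp\!\left(-c\cdot\min\!\left(\tfrac{\delta^{2}}{V},\tfrac{\delta}{w_{\max}}\right)\right).$
Choosing $\delta=\sqrt{(\tau+1)K_u^{2}\log p/(cTh^{2})}$ makes the Gaussian term $c\delta^{2}/V$ at least $(\tau+1)\log p$ (using the upper bound on $V$), so each coordinate is controlled with probability $1-2p^{-(1+\tau)}$; a union bound over $i\in[p]$ then yields the claimed $1-c'p^{-\tau}$ probability.

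The delicate step—and the role of the hypothesis $T\geq(\tau+1)(K_u/K_l)^{4}\log p$—is to verify that the \emph{Gaussian} branch of Bernstein dominates, i.e.\ $\delta\leq V/w_{\max}$. Using the lower bound $V\gtrsim K_l^{2}/(Th^{2})$ together with $w_{\max}\leq K_u/(Th)$ gives $V/w_{\max}\gtrsim K_l^{2}/(K_u\, Th)$; demanding $\sqrt{(\tau+1)K_u^{2}\log p/(cTh^{2})}\lesssim K_l^{2}/(K_u\, Th)$ reduces after squaring and rearranging to exactly the stated sample-size condition. This asymmetric $(K_u/K_l)^{4}$ scaling is the main obstacle: any looser accounting of $V$ or $w_{\max}$ changes the exponent, so the upper bound on $w_{\max}$ and the lower bound on $V$ must be tracked simultaneously with care. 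Once the regime check is done, the tail bound is automatic.
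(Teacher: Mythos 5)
Your proposal is correct and follows essentially the same route as the paper: the paper likewise treats $\widehat{\mu}^{\mathrm{ker}}_{t;i}-\mathbb{E}[\widehat{\mu}^{\mathrm{ker}}_{t;i}]$ as a weighted sum of independent sub-exponential terms, applies a Bernstein-type bound with variance proxy $\sum_s w(s,t)^2$ and range parameter $\max_s w(s,t)$, picks the same $\delta$, and uses the hypothesis $T\geq(\tau+1)(K_u/K_l)^4\log p$ precisely to verify that $\delta$ lies in the sub-Gaussian branch (i.e.\ $\delta\leq V/w_{\max}$) before taking a union bound over coordinates. The only cosmetic difference is that you bound $V$ by restricting to the $O(Th)$ indices where the kernel is supported, whereas the paper applies the bounds $K_l\leq K(\cdot)\leq K_u$ to all $T+1$ weights; both yield $V\asymp K_{\cdot}^2/(Th^2)$ and the identical regime condition.
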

\begin{proof}
Let us define $g_s = (1/h)K((s-t)/(Th))$ and $Y_s = g_s\phi(x_s)$. We have $\widehat{\mu}_t^{\mathrm{ker}} = (1/T)\sum_{s=0}^T Y_s$. According to Assumption~\ref{asp_exp}, $Y_s$ satisfies
\begin{align*}
\mathbb{P}(|Y_{s;i} - \mathbb{E}[Y_{s;i}]|\geq \delta)\leq c'\exp(-(c/g_s^2)\delta^2)\quad \text{for every}\quad \delta\in(0,g_sv],\ \ i=1,\dots,n.
\end{align*}
Therefore, $Y_{s;i}$ is a sub-exponential random variable with parameters $(g_s^2/(2c), 1/(g_sv))$; see~\cite[Chapter 2]{wainwright2019high} for the definition of sub-exponential random variables). On the other hand, it is easy to see that $\mathbb{E}[\widehat{\mu}^{\mathrm{ker}}_{t;i}] = \frac{1}{T}\sum_{s=0}^Tg_s\mu_s =  \frac{1}{T}\sum_{s=0}^T\mathbb{E}[Y_{s;i}]$. Therefore, one can invoke the concentration bound on the sum of sub-exponential random variables~\cite[Proposition 2.9]{wainwright2019high} to obtain
\begin{align*}
	\mathbb{P}\left(\left|\frac{1}{T}\sum_{s=0}^TY_{s;i} - \mathbb{E}[\widehat{\mu}^{\mathrm{ker}}_{t;i}]\right|\geq \delta\right)\leq c'\exp\left(-\frac{cT^2}{\sum_{s=0}^T g_s^2}\cdot\delta^2\right)\quad \text{for every}\quad \delta\in\left(0,\frac{\sum_{s=0}^T g_s^2}{T\max_{s}g_s}\right].
\end{align*}
A simple union bound leads to 
\begin{multline*}
	\mathbb{P}\left(\left\|\frac{1}{T}\sum_{s=0}^TY_{s} - \mathbb{E}[\widehat{\mu}^{\mathrm{ker}}_{t}]\right\|_\infty\geq \delta\right)\leq c'\exp\left(\log p-\frac{cT^2}{\sum_{s=0}^T g_s^2}\cdot\delta^2\right)\\ \text{for every}\quad \delta\in\left(0,\frac{\sum_{s=0}^T g_s^2}{T\max_{s}g_s}\right].
\end{multline*}
Upon choosing $\delta = \sqrt{\frac{(\tau+1)\left(\sum_{s=0}^T g_s^2\right)\log p}{T^2 c}}$, we have
\begin{align*}
	\left\|\frac{1}{T}\sum_{s=0}^TY_{s} - \mathbb{E}[\widehat{\mu}^{\mathrm{ker}}_{t}]\right\|_\infty\leq \sqrt{\frac{(\tau+1)\left(\sum_{s=0}^T g_s^2\right)\log p}{T^2 c}},
\end{align*}
with probability $1-c'p^{-\tau}$, provided that 
\begin{align}\label{eq_delta}
	\sqrt{\frac{(\tau+1)\left(\sum_{s=0}^T g_s^2\right)\log p}{T^2 c}}\leq \frac{\sum_{s=0}^T g_s^2}{T\max_{s}g_s}.
\end{align}
To verify the last inequality, recall that according to Assumption~\ref{asp_kernel}, we have $K_l\leq K(\cdot)\leq K_u$, which in turn implies that $h^{-1}K_l\leq g_s\leq h^{-1}K_u$ for every $s$. This results in
\begin{align*}
	\sqrt{\frac{(\tau+1)\left(\sum_{s=0}^T g_s^2\right)\log p}{T^2 c}}\leq\sqrt{\frac{(\tau+1)K_u^2\log p}{Th^2 c}}, \qquad  \frac{K_l^2}{hK_u}\leq \frac{\sum_{s=0}^T g_s^2}{T\max_{s}g_s}.
\end{align*}
Therefore, to satisfy~\eqref{eq_delta}, it suffices to have 
\begin{align*}
	\sqrt{\frac{(\tau+1)K_u^2\log p}{Th^2 c}}\leq \frac{K_l^2}{hK_u}
\end{align*}
which is satisfied with $T\geq (\tau+1)(K_u/K_l)^4\log p$. This completes the proof.
\end{proof}
Combining Lemmas~\ref{lem_ker_exp} and~\ref{lem_ker_dev} leads to an upper bound on $\norm{\widehat{\mu}_t^{\mathrm{ker}}-\mu_t^\star}_\infty$. To simplify our subsequent arguments, we define $\mathcal{C}^{\mathrm{ker}} = \Gamma_2/4+\Gamma_1+\Gamma_2K_u/24+\Gamma_1K_1/12+\Gamma_0K_2/24$.

\begin{lemma}\label{lem_backward_ker}
	Suppose that Assumptions~\ref{asp_bounded_der} and~\ref{asp_kernel} are satisfied, and $T\geq (\tau+1)(K_u/K_l)^4\log p$ for any arbitrary $\tau\geq 0$. Then, upon choosing $h = T^{-1/4}$, we have 
	\begin{align}
		\norm{\widehat{\mu}_t^{\mathrm{ker}}-\mu_t^\star}_\infty\leq \frac{\mathcal{C}^{\mathrm{ker}}}{T^{1/4}}+\frac{K_u\sqrt{(\tau+1)\log p}}{\sqrt{c}T^{1/4}},\quad \text{with probability of $1-c'p^{-\tau}$.}
	\end{align}
\end{lemma}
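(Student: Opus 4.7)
The plan is to view Lemma~\ref{lem_backward_ker} as the bandwidth-optimized combination of Lemma~\ref{lem_ker_exp} (bias of the kernel estimator) and Lemma~\ref{lem_ker_dev} (concentration around the kernel mean). The starting point is the triangle inequality
\begin{equation*}
    \norm{\widehat{\mu}_t^{\mathrm{ker}}-\mu_t^\star}_\infty \;\leq\; \left\|\mu^\star_t - \mathbb{E}[\widehat{\mu}_t^{\mathrm{ker}}]\right\|_\infty \;+\; \left\|\widehat{\mu}_t^{\mathrm{ker}} - \mathbb{E}[\widehat{\mu}_t^{\mathrm{ker}}]\right\|_\infty,
\end{equation*}
whose two summands are governed exactly by the two preceding lemmas. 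The hypothesis $T\geq (\tau+1)(K_u/K_l)^4\log p$ is precisely what is needed to invoke Lemma~\ref{lem_ker_dev} at confidence level $\tau$, so the stochastic term is controlled on an event of probability at least $1-c'p^{-\tau}$.

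Next, I would plug in the choice $h=T^{-1/4}$ into each term of the bias bound from Lemma~\ref{lem_ker_exp}. The exponents of $T$ become: $h^2=T^{-1/2}$, $h=T^{-1/4}$, $T^{-2}h^{-1}=T^{-7/4}$, $T^{-2}h^{-3}=T^{-5/4}$, and $T^{-2}h^{-5}=T^{-3/4}$. Since $T\geq 1$, every exponent is at most $-1/4$, so each term is upper bounded by its coefficient multiplied by $T^{-1/4}$. Summing the coefficients yields exactly the constant $\mathcal{C}^{\mathrm{ker}} = \Gamma_2/4+\Gamma_1+\Gamma_2K_u/24+\Gamma_1K_1/12+\Gamma_0K_2/24$, giving the first summand $\mathcal{C}^{\mathrm{ker}}/T^{1/4}$ in the stated bound.

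For the second summand, substituting $h=T^{-1/4}$ into Lemma~\ref{lem_ker_dev} gives
\begin{equation*}
    \sqrt{\frac{(\tau+1)K_u^2\log p}{cTh^2}} \;=\; \sqrt{\frac{(\tau+1)K_u^2\log p}{cT^{1/2}}} \;=\; \frac{K_u\sqrt{(\tau+1)\log p}}{\sqrt{c}\,T^{1/4}},
\end{equation*}
which matches the stochastic term in the statement. Adding the bias and deviation bounds completes the proof, with the overall failure probability inherited from Lemma~\ref{lem_ker_dev}. No step poses a genuine obstacle; the only mild subtlety is verifying that each bias term scales no worse than $T^{-1/4}$, which is a simple exponent calculation relying on $T\geq 1$ (and is implied by the stated lower bound on $T$). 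The bandwidth $h=T^{-1/4}$ is in fact chosen to balance the dominant bias term $\Gamma_1 h$ with the stochastic term $K_u/\sqrt{cTh^2}$, which is what makes the choice natural.
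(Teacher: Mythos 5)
Your proposal is correct and matches the paper's proof, which is stated in one line as "follows from Lemmas~\ref{lem_ker_exp} and~\ref{lem_ker_dev} after replacing $h = T^{-1/4}$"; you have simply carried out the substitution explicitly, and your exponent bookkeeping and the identification of the summed coefficients with $\mathcal{C}^{\mathrm{ker}}$ are accurate.
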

\begin{proof}
	The proof follows from Lemmas~\ref{lem_ker_exp} and~\ref{lem_ker_dev} after replacing $h = T^{-1/4}$.
\end{proof}
Given Lemma~\ref{lem_backward_ker}, we are now ready to provide an upper bound for the backward mapping deviation of the kernel averaged empirical mean parameter.
\begin{proposition}[Backward mapping deviation with kernel averaging]\label{lem_back_dev_ker}
	Suppose that Assumptions~\ref{asp_bounded_der} and~\ref{asp_kernel} are satisfied and $\widetilde{F}^*$ is $(L,\alpha)$-locally Lipschitz for some $\alpha, L>0$. Then, for any $\tau\geq 0$ and $T\geq \max\left\{16(\mathcal{C}^{\mathrm{ker}}/\alpha)^4, \left(\frac{16K_u^4(\tau+1)^2}{c^2K_l^4\alpha^4}\cdot \log^2 p\right)\right\}$, we have
	\begin{align*}
		\norm{\theta_t^\star-\widetilde{F}^*(\widehat{\mu}^{\mathrm{ker}}_t)}_\infty\leq \underbrace{\norm{\theta_t^\star-\widetilde{F}^*({\mu}^\star_t)}_\infty}_{:=\Delta_t}+\frac{\mathcal{C}L}{T^{1/4}}+\frac{K_uL\sqrt{(\tau+1)\log p}}{\sqrt{c}T^{1/4}},
	\end{align*}
with probability of $1-c'p^{-\tau}$.
\end{proposition}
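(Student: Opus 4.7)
The plan is to combine Lemma~\ref{lem_backward_ker}, which already controls the deviation of the kernel-averaged empirical mean parameter from its true counterpart, with the local Lipschitz property of $\widetilde{F}^*$ via the triangle inequality. This mirrors the structure of the proof of Proposition~\ref{lem_back_dev}, but with Lemma~\ref{lem_backward_ker} replacing Assumption~\ref{asp_exp} in the concentration step.

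First, I would apply Lemma~\ref{lem_backward_ker} with the given $\tau$ and the bandwidth choice $h = T^{-1/4}$. This requires verifying the lemma's hypothesis $T \geq (\tau+1)(K_u/K_l)^4 \log p$; this is implied (for the relevant regime of $p$ and $\tau$) by the assumed lower bound $T\geq \frac{16K_u^4(\tau+1)^2}{c^2 K_l^4 \alpha^4}\log^2 p$ stated in the proposition. Consequently, with probability at least $1-c'p^{-\tau}$, we obtain
\[
\norm{\widehat{\mu}_t^{\mathrm{ker}}-\mu_t^\star}_\infty \leq \frac{\mathcal{C}^{\mathrm{ker}}}{T^{1/4}} + \frac{K_u\sqrt{(\tau+1)\log p}}{\sqrt{c}\,T^{1/4}}.
\]

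Next, I would verify that on this event $\widehat{\mu}_t^{\mathrm{ker}} \in \mathcal{B}_\infty(\mu_t^\star, \alpha)$, so that the local Lipschitzness of $\widetilde{F}^*$ can be invoked. The two assumed lower bounds on $T$ are precisely what make each of the two terms on the right hand side above at most $\alpha/2$: the bound $T \geq 16(\mathcal{C}^{\mathrm{ker}}/\alpha)^4$ ensures $\mathcal{C}^{\mathrm{ker}}/T^{1/4} \leq \alpha/2$, while $T \geq \frac{16K_u^4(\tau+1)^2}{c^2 K_l^4 \alpha^4}\log^2 p$ (which is stronger than $T\geq \frac{16K_u^4(\tau+1)^2}{c^2\alpha^4}\log^2 p$) ensures the stochastic term is also at most $\alpha/2$. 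Summing yields $\norm{\widehat{\mu}_t^{\mathrm{ker}}-\mu_t^\star}_\infty \leq \alpha$.

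Finally, I would apply the triangle inequality together with the $(L,\alpha)$-local Lipschitzness of $\widetilde F^*$ to write
\[
\norm{\theta_t^\star-\widetilde{F}^*(\widehat{\mu}^{\mathrm{ker}}_t)}_\infty
\leq \norm{\theta_t^\star-\widetilde{F}^*(\mu_t^\star)}_\infty + \norm{\widetilde{F}^*(\mu_t^\star)-\widetilde{F}^*(\widehat{\mu}^{\mathrm{ker}}_t)}_\infty
\leq \Delta_t + L\norm{\mu_t^\star-\widehat{\mu}_t^{\mathrm{ker}}}_\infty,
\]
and then substitute the bound from Lemma~\ref{lem_backward_ker} into the last term to obtain the stated inequality (identifying $\mathcal{C}$ in the statement with $\mathcal{C}^{\mathrm{ker}}$). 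Since all the analytical work is already packaged in Lemma~\ref{lem_backward_ker} and in Definition~\ref{asp_lip}, there is no real obstacle; the only subtlety worth spelling out is the bookkeeping showing that the two lower bounds on $T$ in the statement suffice both to trigger Lemma~\ref{lem_backward_ker} and to keep $\widehat{\mu}_t^{\mathrm{ker}}$ inside the local Lipschitz ball of radius $\alpha$ around $\mu_t^\star$.
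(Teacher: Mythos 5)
Your proposal is correct and follows essentially the same route as the paper's proof: invoke Lemma~\ref{lem_backward_ker} with $h=T^{-1/4}$, use the two lower bounds on $T$ to guarantee $\norm{\widehat{\mu}_t^{\mathrm{ker}}-\mu_t^\star}_\infty\leq \alpha$, and then conclude via the triangle inequality and the $(L,\alpha)$-local Lipschitzness of $\widetilde F^*$. Your bookkeeping (splitting the bound into two $\alpha/2$ contributions and checking the hypothesis of Lemma~\ref{lem_backward_ker}) is in fact slightly more explicit than the paper's, which states the inequality $\leq\alpha$ without elaboration.
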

\begin{proof}
	Due to Lemma~\ref{lem_backward_ker} and $T\geq \max\left\{16(\mathcal{C}^{\mathrm{ker}}/\alpha)^4, \left(\frac{16K_u^4(\tau+1)^2}{c^2K_l^4\alpha^4}\cdot \log^2 p\right)\right\}$, we have
	\begin{align*}
		\norm{\widehat{\mu}_t^{\mathrm{ker}}-\mu_t^\star}_\infty\leq \frac{\mathcal{C}}{T^{1/4}}+\frac{K_u\sqrt{(\tau+1)\log p}}{\sqrt{c}T^{1/4}}\leq \alpha
	\end{align*}
with probability of $1-c'p^{-\tau}$. Conditioned on this event, we have
\begin{align*}
	\norm{\theta_t^\star-\widetilde{F}^*(\widehat{\mu}^{\mathrm{ker}}_t)}_\infty &\leq \norm{\theta_t^\star-\widetilde{F}^*({\mu}^\star_t)}_\infty+\norm{\widetilde{F}^*(\widehat{\mu}^{\mathrm{ker}}_t)-\widetilde{F}^*({\mu}^\star_t)}_\infty\\
	&\leq \norm{\theta_t^\star-\widetilde{F}^*({\mu}^\star_t)}_\infty+L\norm{\widehat{\mu}^{\mathrm{ker}}_t-{\mu}^\star_t}_\infty\\
	&\leq \norm{\theta_t^\star-\widetilde{F}^*({\mu}^\star_t)}_\infty+\frac{\mathcal{C}L}{T^{1/4}}+\frac{K_uL\sqrt{(\tau+1)\log p}}{\sqrt{c}T^{1/4}}
\end{align*}
where in the second inequality we used the $(L,\alpha)$-local Lipschitzness of $\widetilde{F}^*$.
\end{proof}

Relying on Proposition~\ref{lem_back_dev_ker}, we next present the sample complexity of \ref{generalopt1} for GMRF and DMRF with kernel averaging. To this goal, we first provide the explicit form of the kernel empirical mean parameter~\eqref{eq_ker} in both settings:
\begin{sloppypar}
	\begin{itemize}
		\item For GMRF, the kernel mean parameters are the weighted sample covariance matrices defined as $\widehat{\Sigma}^{\mathrm{ker}}_t = \sum_{s=0}^T w(s,t)x_sx_s^\top$, for every $t = 0,\dots,T$.
		\item For DMRF, the kernel mean parameters are the weighted empirical node- and edge-wise marginal probabilities respectively defined as $\widehat{\mu}^{\mathrm{ker}}_{t;ik} = \sum_{s=0}^{T}w(s,t)\mathbb{I}[x_{s;i} = k]$ and $\widehat{\mu}^{\mathrm{ker}}_{t;ijkl} = \sum_{s=0}^{T}w(s,t)\mathbb{I}[x_{s;i} = k, x_{s;j} = l]$, for every $i,j,k,l$.
	\end{itemize}
\end{sloppypar}

To streamline the presentation, we define the following quantities:
\begin{align*}
\mathcal{E}^{\mathrm{ker}}_{\mathrm{g}} := \frac{\mathcal{C}^{\mathrm{ker}}+\kappa_1+K_u\kappa_3^{-1}}{\kappa_2^2},\quad \mathcal{N}^{\mathrm{ker}}_{\mathrm{g}} :=  \max\left\{\left(\frac{\mathcal{C}^{\mathrm{ker}}s}{\kappa_2}\right)^{\frac{4}{1-r}}, \left(\frac{K_u s}{K_l\sqrt{\kappa_3}\kappa_2}\right)^{\frac{4}{1-r}}\right\}
\end{align*}
where $\kappa_1,\kappa_2,\kappa_3,r, s$ are defined in Assumptions~\ref{asp_bound} and Definition~\ref{def_weak}.

\begin{proposition}[Sample complexity of GMRF with kernel averaging]\label{prop_gmrf_sample_ker}
	Suppose that a sequence of data samples $\left\{x_t\right\}_{t=0}^{T}$ are collected from the distribution~\eqref{gmrf}. Moreover, suppose that Assumptions~\ref{asp_bound},~\ref{asp_bounded_der}, and~\ref{asp_kernel} are satisfied and $\Sigma^\star_t$ is $(s,r)$-weakly sparse for some $s\geq 0$ and $0\leq r<1$.
	Consider~\ref{generalopt1} with $\widetilde{F}^*(\widehat{\Sigma}^{\mathrm{ker}}_t) = [\texttt{ST}_{\nu_t}(\widehat{\Sigma}^{\mathrm{ker}}_t)]^{-1}$ and parameters $\nu_t \asymp {\sqrt{\log n}}/{T^{1/4}}$, $\lambda_t\asymp {\mathcal{E}_g\log n}/{T^{1/4}}$, and $h \asymp T^{-1/4}$. Then, with probability of $1-4Tn^{-15}$, the following statements hold:
	\begin{itemize}
		\item[-] {\bf Estimation error:} Suppose that $T\gtrsim \mathcal{N}^{\mathrm{ker}}_{\mathrm{g}}\log^2 n$. For any $q\in \{0\}\cup [1,\infty)$, the solution of ~\ref{generalopt1} with a temporal $\ell_q$-regularizer satisfies
		\begin{align*}
			\norm{\widehat{\Theta}_t-\Theta^\star_t}_{\infty/\infty}\lesssim \mathcal{E}^{\mathrm{ker}}_{\mathrm{g}}{\frac{\sqrt{\log n}}{T^{1/4}}},\qquad \text{ for every }\ t = 0,1,\dots,T.
		\end{align*}
		\item[-] {\bf Sparsistency for smoothly-changing GMRF.} Suppose that the GMRF is smoothly changing with parameters $(q,D), q\geq 1, D\geq 0$. Suppose that $T\gtrsim \left(\mathcal{N}^{\mathrm{ker}}_{\mathrm{g}}\vee (\mathcal{E}^{\mathrm{ker}}_{\mathrm{g}}/\bar{\Theta}_t)^4\right)\log^2 n$. Then, the solution of ~\ref{generalopt1} with $0<\gamma<1/(1+D)$ and a temporal $\ell_q$-regularizer satisfies
		\begin{align}
	\mathrm{supp}(\widehat{\Theta}_t) = \mathrm{supp}({\Theta}^\star_t), \qquad \forall t = 0,\dots,T.
\end{align}
\item[-] {\bf Sparsistency for sparsely-changing GMRF.} Suppose that the GMRF is sparsely changing with with sparsity parameter $D_0\geq 0$. Suppose that $T\gtrsim \left(\mathcal{N}^{\mathrm{ker}}_{\mathrm{g}}\vee (\mathcal{E}^{\mathrm{ker}}_{\mathrm{g}}/{\Theta}^{\min}_t)^4\right)\log^2 n$. Then, with any choice of $0<\gamma<1$, the optimal solution of~\ref{generalopt1} with temporal $\ell_0$-regularizer satisfies
\begin{equation}
					\begin{aligned}
					& \mathrm{supp}(\widehat{\Theta}_t) = \mathrm{supp}({\Theta}^\star_t), && \qquad \forall t = 0,\dots,T,\\
					& \mathrm{supp}(\widehat{\Theta}_t-\widehat{\Theta}_{t-1}) = \mathrm{supp}({\Theta}^\star_t-{\Theta}^\star_{t-1}), && \qquad \forall t = 1,\dots,T.
					\end{aligned}
				\end{equation}
	\end{itemize}	
\end{proposition}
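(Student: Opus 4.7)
The plan is to combine three earlier results in a chain: Lemma~\ref{lem_error_lip}, which gives both an approximation-error bound $\Delta_t \lesssim (\kappa_1/\kappa_2)\nu_t$ and the $(L,\alpha)$-local Lipschitzness of $\widetilde{F}^*(\Sigma)=[\texttt{ST}_{\nu_t}(\Sigma)]^{-1}$ with $L=4/\kappa_2^2$ and $\alpha=\min\{\kappa_2\nu_t^r/(24s),\,\nu_t/2\}$; Proposition~\ref{lem_back_dev_ker}, which turns local Lipschitzness plus the kernel-averaging concentration into a high-probability bound on the backward mapping deviation; and Theorem~\ref{thm_constrained}, which translates a feasibility gap of size $\lambda_t$ into an estimation error of $2\lambda_t$ and, under appropriate minimum-signal conditions, into exact sparsity recovery.

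First, I would plug the choice $\nu_t\asymp\sqrt{\log n}/T^{1/4}$ into Lemma~\ref{lem_error_lip}. The side condition $\nu_t\leq(\kappa_2/(20s))^{1/(1-r)}$ is implied by the first term in the definition of $\mathcal{N}^{\mathrm{ker}}_{\mathrm{g}}$, so both the $\Delta_t$ bound and the Lipschitz radius $\alpha\asymp(\kappa_2/s)\nu_t^r$ hold. Next, with this value of $\alpha$ and $p=n^2$, I would verify the two lower bounds on $T$ required in Proposition~\ref{lem_back_dev_ker} (namely $T\geq 16(\mathcal{C}^{\mathrm{ker}}/\alpha)^4$ and $T\gtrsim (K_u/K_l)^4 \alpha^{-4}\log^2 p$); both reduce, after substituting $\alpha$ and $\nu_t$, to $T\gtrsim\mathcal{N}^{\mathrm{ker}}_{\mathrm{g}}\log^2 n$, matching the hypothesis. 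Applying Proposition~\ref{lem_back_dev_ker} with $\tau=15$ and absorbing $L\Delta_t$ into the constant then yields
\begin{equation*}
\norm{\Theta^\star_t-\widetilde{F}^*(\widehat{\Sigma}^{\mathrm{ker}}_t)}_{\infty/\infty}\;\lesssim\;\mathcal{E}^{\mathrm{ker}}_{\mathrm{g}}\frac{\sqrt{\log n}}{T^{1/4}}
\end{equation*}
with probability at least $1-4n^{-15}$, where the constant $\mathcal{E}^{\mathrm{ker}}_{\mathrm{g}}$ collects the contributions from $\Delta_t$ (via $\kappa_1$), from the kernel bias ($\mathcal{C}^{\mathrm{ker}}$), and from the concentration term ($K_u$), each amplified by $L\asymp\kappa_2^{-2}$.

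With the stated choice $\lambda_t\asymp\mathcal{E}^{\mathrm{ker}}_{\mathrm{g}}\sqrt{\log n}/T^{1/4}$, Assumption~\eqref{asp_A1} of Theorem~\ref{thm_constrained} is satisfied at every $t$. The estimation bound then follows immediately from~\eqref{eq_error}. For smoothly-changing sparsistency, I would check the condition $2\lambda_t\leq\min_{i\in\mathcal{S}_t}|\Theta^\star_{t;i}|=\bar\Theta_t$, which reduces to $T\gtrsim(\mathcal{E}^{\mathrm{ker}}_{\mathrm{g}}/\bar\Theta_t)^4\log^2 n$, i.e., the second term in the stated sample-size hypothesis; combining with the $\ell_q$-sparsistency clause of Theorem~\ref{thm_constrained} yields the support recovery. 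For sparsely-changing sparsistency, both $2\lambda_t\leq\bar\Theta_t$ and $2\lambda_t+2\lambda_{t-1}\leq\Delta\bar\Theta_t$ are required; taking $\Theta_t^{\min}$ as defined in the excerpt lumps these into a single condition $\lambda_t\lesssim\Theta_t^{\min}$, which is exactly $T\gtrsim(\mathcal{E}^{\mathrm{ker}}_{\mathrm{g}}/\Theta_t^{\min})^4\log^2 n$. Finally, I would union-bound over $t=0,\dots,T$ to obtain the probability $1-4Tn^{-15}$.

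The main obstacle is purely bookkeeping: tracking how the $T$-dependent threshold $\nu_t$ propagates into the locality radius $\alpha$ of Lemma~\ref{lem_error_lip}, and then confirming that the resulting $T$-in-$T$ condition from Proposition~\ref{lem_back_dev_ker} simplifies to the clean lower bound $T\gtrsim\mathcal{N}^{\mathrm{ker}}_{\mathrm{g}}\log^2 n$. Once this is verified, the rest of the argument is a direct invocation of Theorem~\ref{thm_constrained}, mirroring the omitted proof of Theorem~\ref{thm_gmrf_sample} but with the kernel rates $T^{-1/4}$ and $\log^2 n$ replacing $N_t^{-1/2}$ and $\log n$.
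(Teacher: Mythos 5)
Your proposal is correct and follows essentially the same route as the paper's own proof: chaining Lemma~\ref{lem_error_lip} (approximation error and local Lipschitzness of the soft-thresholded inverse) into Proposition~\ref{lem_back_dev_ker} (kernel-averaged backward mapping deviation) and then into Theorem~\ref{thm_constrained}, with the same verification of the $T$ lower bounds and minimum-signal conditions. Your writeup is in fact more explicit than the paper's proof about how $\nu_t$ propagates into the locality radius $\alpha$ and how the two conditions of Proposition~\ref{lem_back_dev_ker} collapse to $T\gtrsim\mathcal{N}^{\mathrm{ker}}_{\mathrm{g}}\log^2 n$, but the underlying argument is identical.
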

\begin{sloppypar}
\begin{proof}
	The proof follows that of Theorem~\ref{thm_gmrf_sample}. First, due to Lemma~\ref{lem_error_lip}, we have $\Delta_t\leq (2\kappa_1/\kappa_2)\nu_t$ and $\widetilde{F}^*(\Sigma)$ is $(L,\alpha)$-locally Lipschitz with $L=4/\kappa_2^2$ and $\alpha = \min\{\kappa_2\nu_t^r/(24s),\nu_t/2\}$, provided that $\nu_t\leq (\kappa_2/(40s))^{1/(1-r)}$. This combined with Proposition~\ref{lem_back_dev_ker} and the selected values for $\nu_t$ and $T$ leads to
	\[
	\norm{\Theta_t^\star-[\texttt{ST}_{\nu_t}(\widehat{\Sigma}^{\mathrm{ker}}_t)]^{-1}}_{\infty/\infty}\leq \mathcal{E}^{\mathrm{ker}}_{\mathrm{g}}\frac{\sqrt{\log n}}{T^{1/4}},\quad \text{for every $t$ with probability of $1-4Tn^{-15}$.}
	\]
Therefore, upon choosing $\lambda_t\asymp {\mathcal{E}^{\mathrm{ker}}_{\mathrm{g}}\log n}/{T^{1/4}}$, Theorem~\ref{thm_constrained} can be invoked to write $\norm{\widehat{\Theta}_t-\Theta^\star_t}_{\infty/\infty}\leq \lambda_t$, which completes the proof of the first statement. 
To prove the correct sparsity recovery, according to Theorem~\ref{thm_constrained} we additionally need to satisfy $\lambda_t\leq \bar{\Theta}_t/2$ for smoothly-changing GMRFs, and $\lambda_t\leq {\Theta}^{\min}_t/2$ for sparsely-changing GMRFs. These are guaranteed to hold with the above error bound and the assumed lower bounds on $T$.
\end{proof}
\end{sloppypar}

Finally, we provide the sample complexity if inferring DMRFs with kernel averaging. Similar to GMRF, we define the following quantities:
\begin{align*}
\mathcal{E}^{\mathrm{ker}}_{\mathrm{d}} := \frac{\mathcal{C}^{\mathrm{ker}}+K_u}{\mu_{\min}},\quad \mathcal{N}^{\mathrm{ker}}_{\mathrm{d}} :=  \max\left\{\left(\frac{\mathcal{C}^{\mathrm{ker}}}{\mu_{\min}}\right)^4, \left(\frac{K_u}{K_l\mu_{\min}}\right)^4\right\}
\end{align*}

\begin{proposition}[Sample complexity of DMRF with kernel averaging]\label{prop_dmrf_sample_ker}
	Suppose that a sequence of data samples $\left\{x_t\right\}_{t=0}^{T}$ are collected from the distribution~\eqref{dmrf}. Moreover, suppose that Assumptions~\ref{asp_lb},~\ref{asp_bounded_der}, and~\ref{asp_kernel} are satisfied.
	Consider~\ref{generalopt1} with parameter $\lambda_t\asymp \Delta_t+\mathcal{E}^{\mathrm{ker}}_{\mathrm{d}}\sqrt{\log p} /T^{1/4}$ and the backward mapping $\widetilde{F}_{\mathrm{trw}}^*$ defined as~\eqref{eq_tre}. Then, with probability of $1-2Tp^{-8}$, the following statements hold:
	\begin{itemize}
		\item[-] {\bf Estimation error:} Suppose that $T\gtrsim \mathcal{N}^{\mathrm{ker}}_{\mathrm{d}}\log^2 p$. For any $q\in \{0\}\cup [1,\infty)$, the solution of ~\ref{generalopt1} with a temporal $\ell_q$-regularizer satisfies
		\begin{align*}
			\norm{\widehat{\theta}_t-\theta^\star_t}_{\infty}\lesssim \mathcal{E}^{\mathrm{ker}}_{\mathrm{d}}{\frac{\sqrt{\log p}}{T^{1/4}}},\qquad \text{ for every }\ t = 0,\dots,T.
		\end{align*}
		\item[-] {\bf Sparsistency for smoothly-changing DMRF.} Suppose that the DMRF is smoothly changing with parameters $(q,D), q\geq 1, D\geq 0$. Suppose that $\Delta_t\leq \bar\theta_t/2$ and $T\gtrsim \left(\mathcal{N}^{\mathrm{ker}}_{\mathrm{d}}\vee (\mathcal{E}^{\mathrm{ker}}_{\mathrm{d}}/\bar{\theta}_t)^4\right)\log^2 p$. Then, the solution of ~\ref{generalopt1} with $0<\gamma<1/(1+D)$ and a temporal $\ell_q$-regularizer satisfies
		\begin{align}\label{eq_dmrf_lq}
	\mathrm{supp}\left(\widehat{\theta}_t\right) = \mathrm{supp}\left({\theta}^\star_t\right), \qquad \forall t = 0,\dots,T.
\end{align}
		\item[-] {\bf Sparsistency for sparsely-changing DMRF.} Suppose that the time-varying DMRF is sparsely changing with sparsity parameter $D_0\geq 0$. Suppose that $\Delta_t\leq \theta_t^{\min}/2$ and $T\gtrsim \left(\mathcal{N}^{\mathrm{ker}}_{\mathrm{d}}\vee (\mathcal{E}^{\mathrm{ker}}_{\mathrm{d}}/{\theta}^{\min}_t)^4\right)\log^2 p$. Then, with any choice of $0<\gamma<1$, the optimal solution of~\ref{generalopt1} with temporal $\ell_0$-regularizer satisfies
		\begin{equation}\label{eq_dmrf_l0}
					\begin{aligned}
					& \mathrm{supp}\left(\widehat{\theta}_t\right) = \mathrm{supp}({\theta}^\star_t), && \qquad \forall t = 0,\dots,T,\\
					& \mathrm{supp}\left(\widehat{\theta}_t-\widehat{\theta}_{t-1}\right) = \mathrm{supp}({\theta}^\star_t-{\theta}^\star_{t-1}), && \qquad \forall t = 1,\dots,T.
					\end{aligned}
				\end{equation}
	\end{itemize}	
\end{proposition}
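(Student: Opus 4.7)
The plan is to mirror the argument used for Proposition~\ref{prop_gmrf_sample_ker} in the Gaussian setting, swapping in the discrete counterparts of the two main ingredients, and then invoking Theorem~\ref{thm_constrained}. The first ingredient is local Lipschitzness of the approximate backward mapping: Lemma~\ref{lem_dmrf_lip} already provides that $\widetilde F^*_{\mathrm{trw}}$ is $(L,\alpha)$-locally Lipschitz with $L=6/\mu_{\min}$ and $\alpha=\mu_{\min}/2$ under Assumption~\ref{asp_lb}. The second ingredient is the concentration of $\widehat\mu_t^{\mathrm{ker}}$ around $\mu_t^\star$, which Hoeffding's inequality supplies with $c=c'=2$ and $v=\infty$, so that Proposition~\ref{lem_back_dev_ker} is directly applicable.

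Plugging $\alpha=\mu_{\min}/2$, $L=6/\mu_{\min}$, and $\tau=8$ into Proposition~\ref{lem_back_dev_ker} with $h\asymp T^{-1/4}$, I would obtain, for each fixed $t$,
\[
\bigl\|\theta_t^\star - \widetilde F^*_{\mathrm{trw}}(\widehat\mu_t^{\mathrm{ker}})\bigr\|_\infty \;\lesssim\; \Delta_t + \frac{\mathcal{C}^{\mathrm{ker}}}{\mu_{\min}}\cdot\frac{1}{T^{1/4}} + \frac{K_u}{\mu_{\min}}\cdot\frac{\sqrt{\log p}}{T^{1/4}} \;\lesssim\; \Delta_t + \mathcal{E}^{\mathrm{ker}}_{\mathrm{d}}\frac{\sqrt{\log p}}{T^{1/4}},
\]
with probability at least $1-2p^{-8}$, provided $T$ exceeds both thresholds appearing in Proposition~\ref{lem_back_dev_ker}. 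Substituting $\alpha=\mu_{\min}/2$ into those thresholds shows that they are absorbed by the single condition $T\gtrsim \mathcal{N}^{\mathrm{ker}}_{\mathrm{d}}\log^2 p$, as defined in the statement. A union bound over $t=0,\dots,T$ upgrades the failure probability to $2Tp^{-8}$, so the assumption~\eqref{asp_A1} of Theorem~\ref{thm_constrained} holds simultaneously at all times with the prescribed $\lambda_t$. The estimation error bound is then the direct consequence~\eqref{eq_error}.

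For sparsistency, I would verify the two minimum-signal conditions required by Theorem~\ref{thm_constrained}. In the smoothly-changing case, one needs $2\lambda_t\le \bar\theta_t$; this splits into the explicit assumption $\Delta_t\le \bar\theta_t/2$ (which cannot be eliminated by more samples because $\Delta_t$ is an intrinsic bias of $\widetilde F^*_{\mathrm{trw}}$) and $\mathcal{E}^{\mathrm{ker}}_{\mathrm{d}}\sqrt{\log p}/T^{1/4}\lesssim \bar\theta_t$, which is precisely the strengthened sample size requirement $T\gtrsim (\mathcal{E}^{\mathrm{ker}}_{\mathrm{d}}/\bar\theta_t)^4\log^2 p$. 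For sparsely-changing DMRFs with a temporal $\ell_0$ regularizer, the additional condition $2\lambda_t+2\lambda_{t-1}\le \Delta\bar\theta_t$ is handled analogously, with $\theta_t^{\min}=\min\{\bar\theta_t,\Delta\bar\theta_t/2,\Delta\bar\theta_{t-1}/2\}$ encoding both signal strengths simultaneously.

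The main obstacle is not conceptual but bookkeeping: the constants $\mathcal{E}^{\mathrm{ker}}_{\mathrm{d}}$ and $\mathcal{N}^{\mathrm{ker}}_{\mathrm{d}}$ must be defined so that the approximation-error term $\mathcal{C}^{\mathrm{ker}}L/T^{1/4}$ and the concentration term $K_u L\sqrt{(\tau+1)\log p}/(\sqrt{c}T^{1/4})$ coming out of Proposition~\ref{lem_back_dev_ker} collapse cleanly into the single rate announced in the proposition. Unlike the Gaussian case, the irreducible bias $\Delta_t$ from the tree-reweighted variational approximation of the NP-hard backward mapping cannot be driven to zero by increasing $T$, so it must appear explicitly in both the error bound and as an independent smallness hypothesis in the sparsistency statements. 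Once this is acknowledged, the rest of the argument is a verbatim translation of the Gaussian proof and requires no new machinery.
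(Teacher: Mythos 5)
Your proposal is correct and follows exactly the route the paper intends: the paper's own proof is a one-line reference to Theorem~\ref{thm_gmrf_sample} and Proposition~\ref{prop_gmrf_sample_ker}, and you have filled in precisely those details --- Lemma~\ref{lem_dmrf_lip} for the $(6/\mu_{\min},\mu_{\min}/2)$-local Lipschitzness, Hoeffding for Assumption~\ref{asp_exp}, Proposition~\ref{lem_back_dev_ker} with $\tau=8$ plus a union bound over $t$, and then Theorem~\ref{thm_constrained} with the minimum-signal conditions. Your observation that the irreducible bias $\Delta_t$ must persist in the bound (and hence appear as a separate smallness hypothesis in the sparsistency statements) is exactly right and consistent with how the paper handles the fixed-sample DMRF case in Theorem~\ref{thm_dmrf_sample}.
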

\begin{proof}
The proof is similar to those of Theorem~\ref{thm_gmrf_sample} and Proposition~\ref{prop_gmrf_sample_ker}. The details are omitted for brevity.
\end{proof}

    The above propositions show that the previously imposed lower bounds on the number of samples per time can be relaxed via kernel averaging. However, this comes at the expense of increasing the error bound from $\cO(N_t^{-1/2})$ to $\cO(T^{-1/4})$ (in this case, $T$ plays the role of sample size $N_t$). 

\section{Computational experiments}

In this section, we test the proposed method in both synthetic and real datasets. We point out that the single-parameter version of the proposed algorithm was showcased in \cite{fattahi2021scalable}, thus in this section we seek to demonstrate the parametric version of the algorithm.

In \S\ref{sec:synt} we consider synthetically generated instances of time-varying GMRFs and compare the performance of our proposed method with two other well-known techniques. In \S\ref{sec:real}, we test our approach for time-varying DMRFs, using stock market data for our case study.

\subsection{Experiments with synthetic data}\label{sec:synt}

We consider randomly generated instances of sparsely-changing GMRFs. Given the dimension $n$, the number of time periods $T$ and the number of observations per time period $N_t$, the instances are constructed as follows. Initially, for time period $t=0$, we construct a ``true" precision matrix $\bar \Theta_t\in \mathbb{R}^{n\times n}$ with exactly $3n$ off-diagonal non-zero elements. Each non-zero off-diagonal element satisfies $(\bar\Theta_t)_{ij}=-0.4$, while $$(\bar\Theta_t)_{ii}=1+\sum_{j\neq i}|(\bar\Theta_t)_{ij}|$$ to guarantee positive definiteness. Then, for all subsequent time periods, 4\% of the non-zero off-diagonal elements are set to $0$, and the same number of previously zero elements are set to the value $-0.4$. Once the precision matrices have been constructed, we generate for each period $N_t$ iid samples $x~\sim \mathcal{N}(0, \bar\Theta_t^{-1})$ that are used to train the models, and another $N_t$ samples to use for validation. Thus, at the end of this process, we obtain two datasets, each containing $T\times N_t$ samples of dimension $n$. Finally, for each combination of parameters, we repeat this process five times and report the averages over the five instances generated with identical parameters.

\subsubsection{Methods} We test three approaches:\newline
$\bullet$ \textbf{ProxGL with cross-validation} We use the parametric algorithm described in \S\ref{sec_algorithm}, which solves \ref{generalopt1} for all values of the parameter $\gamma$. We then use cross-validation to determine the best value of $\gamma$ as follows. Given any fixed value of $\gamma$, the method produces a sequence of estimated precision matrices $\{\hat \Theta_t\}_{t=0}^T$. Then, given the sequence of samples in the validation set for time period $t$, $\{x_{t}^{(i)}\}_{i=1}^{N_t}$, the negative log-likelihood that the samples where generated from the estimated precision matrices is proportional to  $$-\frac{N_t}{2}\log (\text{det}(\hat \Theta_t))+\frac{1}{2}\sum_{i=1}^{N_t}\left(x_{t}^{(i)}\right)^\top \hat \Theta_t x_{t}^{(i)},$$
and the overall negative log-likelihood can be obtained by summing across all time periods. We select the value of $\gamma$ that minimizes the negative log-likelihood. In our computations, we set the $\ell_\infty$ parameter $\lambda=0.2$ ( this parameter naturally matches the data-generation process, where parameters change by multiples of $0.4$), and the shrinkage parameter $\nu=\nu_0\sqrt{(\log n)/ (T \cdot N_t)}$ for $\nu_0\in \{0.0,0.2,0.5,0.8,2.0\}$. Observe that we could select $\nu_0$ using cross-validation as well. Instead, we show that while the two extreme choices $\nu_0\in \{0,2\}$ (corresponding to no shrinkage, or excessive shrinkage) are unsatisfactory, all other choices of $\nu_0$ outperform the alternatives. 
\newline
$\bullet$ \textbf{TVGL with cross-validation} The time varying Graphical Lasso (TVGL), computed as~\eqref{mle_reg_gmrf}, is a well-known regularized MLE approach for estimating the sparsely-changing GMRFs~\citep{hallac2017network, cai2018capturing}. Similar to~\ref{generalopt1}, we pick $\gamma_1$ and $\gamma_2$ via cross-validation, i.e., by selecting the parameters that minimize the negative log-likelihood.
\newline
$\bullet$ \textbf{L1E with cross-validation} Consider an $\ell_1$ relaxation of our proposed estimator~\ref{generalopt1}, where the $\ell_0$ regularizer in the objective function is replaced with its $\ell_1$ relaxation. The resulted estimator reduces to that of~\cite{yang2014elementary} for $T=0$, and to that of~\cite{wang2018fast} for $T=1$ and $\gamma = 1$. Similar to ProxGL and TVGL, we fine-tune the parameter $\gamma$ via cross-validation. 

\subsubsection{Metrics} 

We test the quality of the approaches in terms of the estimation error with respect to the true parameters, as well as the ability to correctly recover the sparsity pattern of the sequence of precision matrices and their changes. The estimation error is computed as $$\texttt{error}=\sqrt{\frac{\sum_{t=0}^T\sum_{i=1}^n\sum_{j=i}^n \left(\Theta^\star_{t;ij}-\hat\Theta_{t;ij}\right)^2}{\sum_{t=0}^T\sum_{i=1}^n\sum_{j=i}^n  {\Theta^\star_{t;ij}}^2}},$$ 
where $\{\Theta^\star_t\}_{t=0}^T$ are the ``true" precision matrices of the process that generated the data, and $\{\hat \Theta_t\}_{t=0}^T$ are the estimates delivered by a given method. The ability to recover the true sparsity pattern is computed using the 
$$\texttt{F1-score}=2\times \frac{\texttt{Recall}\times \texttt{Precision}}{\texttt{Recall}+\texttt{Precision}},$$
where $\texttt{Recall}=\texttt{TP}/(\texttt{TP}+\texttt{FP})$, $\texttt{Precision}=\texttt{TP}/(\texttt{TP}+\texttt{FN})$, and $\texttt{TP}$, $\texttt{FP}$ and $\texttt{FN}$ denote respectively the number of true positives, false positives and false negatives in the sequence of estimated precision matrices. We also use the \texttt{F1-score} to evaluate the detection of changes in the sparsity patterns, where the formulas are identical but $\texttt{TP}$, $\texttt{FP}$ and $\texttt{FN}$ refer to changes in the sparsity pattern of the matrices from a time period to the next. 

\subsubsection{Performance} We test the performance of all methods by varying the number of samples per time period. Specifically, we fix $n=50$, $T=10$ and let $N_t= n\kappa$ for integer $1\leq\kappa\leq 20$. Table~\ref{tab:statsSynt} (at the end of the paper) presents the results for $\kappa=\{1,5,10,15,20\}$ and varying levels of threshold in the approximate backward mapping. Note that $\nu_0=0$ corresponds to no thresholding, whereas $\nu_0=2$ sets every off-diagonal entry of the sample covariance matrix to zero. As can be seen in the table, ProxGL with $\nu_0\in \{0,2\}$ is clearly inferior to other choices of the parameters, so these values are excluded from our subsequent experiments. Figure~\ref{fig:synt_metrics} depicts the performance of ProxGL, TVGL, and L1E for more values of $N_t/n$. We note that both ProxGL and TVGL tend to perform better as the number of samples $N_t$ increases (as expected). While TVGL results in a better estimation error with few samples ($N_t=2n$), ProxGL performs better for all values of $N_t\geq 8n$ and all parameters $\nu_0\in \{0.2,0.5,0.8\}$. In particular, for the largest value $N_t=40n$, ProxGL decreases the estimation error from 7.4\% of TVGL to up to 5.3\% (if $\nu_0=0.2$). We also observe that both TVGL and ProxGL have good performance in terms of detecting the true sparsity pattern of $\hat \Theta_t$.
However, TVGL fails to correctly identify changes in the underlying graphical model (with a \texttt{F1-score} of $0.2$ for most values of $N_t$), whereas ProxGL is able to do so accurately (with $\texttt{F1-score}$ above to $0.7$ for the largest values of $N_t$ considered). Method L1E performs poorly according to the three metrics considered, and is inferior to the other two approaches. Finally, we point out that no value of $\nu_0$ for ProxGL clearly dominates the others, with larger values resulting in better performance when $N_t$ is small and smaller values performing best with a large number of samples per time period.

We now briefly discuss computational times. TVGL is solved via the solver MOSEK~\citep{mosek}, which is one of the fastest available solvers for general-purpose convex optimization problems. We highlight the fact that while faster algorithms for static Graphical Lasso (such as QUIC~\citep{hsieh2014quic} and BIG-QUIC~\citep{hsieh2013big}) exist, these algorithms do not readily extend to the time-varying settings. Moreover, L1E reduces to linear programming (LP), for which fast algorithms (such as GUROBI~\citep{gurobi}, CPLEX~\citep{cplex2009v12}, and MOSEK) exist. However, these solvers are incapable of solving LPs parametrically, thereby leading to larger runtimes with cross-validation. In our experiments, we use MOSEK to solve L1E.

Our implementation of \ref{generalopt1} is very fast: for the large-sample instance with $N_t=40n$, it requires, on average, 11 seconds per instance on a single thread. We point out that this time includes computing the backward mappings, solving \ref{generalopt1} for all values of $\gamma$, and printing to disc (a CSV file) the estimated precision matrices for all possible values of $\gamma$. We also highlight that the most expensive component is, in fact, printing to file, which accounts for 9 out of the 11 seconds on average, while computing the backward mappings and solving \ref{generalopt1} is done in less than two seconds. On the other hand, for the same large-sample instance with $N_t=40n$,  TVGL with cross-validation over 16 choices of the parameters $(\gamma_1,\gamma_2)$ requires on average 1600 seconds per instance on a single thread, which is at least 145 times slower than \ref{generalopt1}. Finally, L1E with cross-validation over 10 choices of the parameter $\gamma$ requires on average 61 seconds per instance on a single thread, which is at least 5 times slower than \ref{generalopt1}.

\begin{figure}[!h]
	\centering
	\subfloat[\texttt{F1-score}: parameters]{\includegraphics[width=0.45\textwidth,trim={11cm 5.5cm 11cm 5.5cm},clip]{./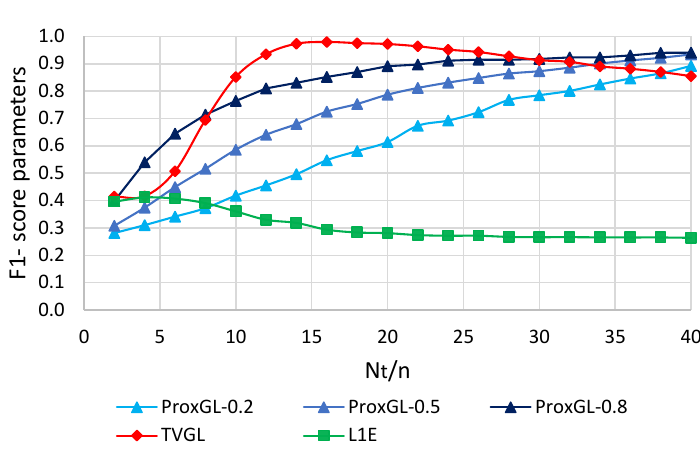}}\hfill\subfloat[\texttt{F1-score}: differences between parameters]{\includegraphics[width=0.45\textwidth,trim={11cm 5.5cm 11cm 5.5cm},clip]{./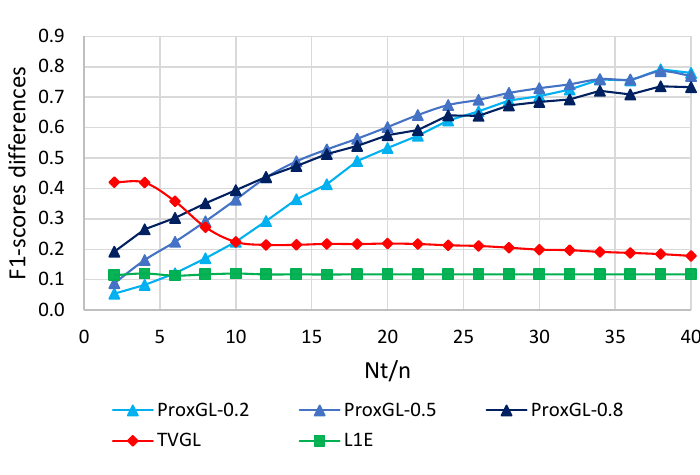}}\hfill	\subfloat[Estimation \texttt{error}]{\includegraphics[width=0.55\textwidth,trim={11cm 5.5cm 11cm 5.5cm},clip]{./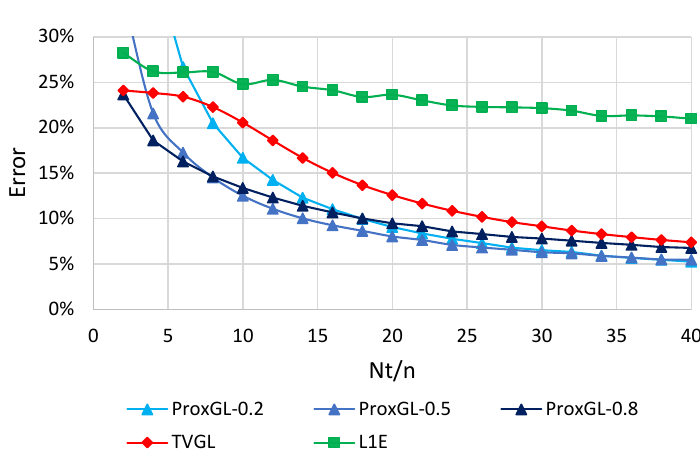}}
	\caption{\small Performance metrics between ProxGL, TVGL and L1E. The top row depicts the ability of each method to recover the sparsity pattern and identify changes in the underlying graph (top left: F1-score of the parameters, top right: F1-score of changes in the parameters), and bottom graphs depict the actual estimation error. }
	\label{fig:synt_metrics}
\end{figure}

In summary, we see that while TVGL has a reasonable performance and can identify the sparsity pattern of the precision matrices, ProxGL is able to decrease further the estimation error and is substantially better at identifying changes in the underlying graphs, and does so with only a fraction of the computational cost. 

\subsubsection{Further experiments on computational times}

We further discuss the computational times of solving \ref{generalopt1} for larger instances of GMRFs. To do so, we fix $T=10$ and $N_t=n/2$, and vary the dimension $n$ (and thus the number of parameters to be estimated). The computational times for DMRF and with varying $T$ are discussed in the next section. The results are summarized in Table~\ref{tab:timesSynt}. It shows the dimension $n$ and the total number of parameters to be estimated---computed as $n(n+1)T/2$. Moreover, it shows the time (in seconds) required to compute the backward mapping, as well as the time to solve the dynamic program for all values of $\gamma$, and the overall computational time. We see that in these instances, the computational times are primarily dominated by the backward mapping process, involving the inversion of $T$ matrices of size $n\times n$. Conversely, the time spent on solving \ref{generalopt1} itself is negligible.

\begin{table}[!h]
	\begin{center}
		\caption{Time required to compute estimators  \ref{generalopt1} (for all values of $\gamma$) on synthetic instances. The time to print the solutions to a file is not included. }
		\label{tab:timesSynt}
		\setlength{\tabcolsep}{2pt}
		\begin{tabular}{ c c |c c c}
			\hline
			\multirow{2}{*}{$\mathbf{n}$} & \multirow{2}{*}{\textbf{\# params}} & \multicolumn{3}{c}{\underline{\textbf{time (s)}}}\\
			&&\textbf{backwards mapping}\ &\ \textbf{dynamic program} \ &\ \textbf{total}\\
			\hline
			100& 50,500& 0.3 & 0.1 &0.4\\
			500& 1,252,500&4.9&0.9&5.8\\
			1,000&5,005,000&71.2&5.0&76.2\\
			1,500&11,257,500&250.2&11.2&261.4\\
			2,000&20,010,000&450.1&15.7&465.8\\
			\hline
		\end{tabular}
	\end{center}
\end{table}

\subsection{Case study with Stock Market data}\label{sec:real}

In this section, we test our approach for DMRFs on real stock market data. The dataset, based on the publicly available data from Kaggle\footnote{\url{https://www.kaggle.com/datasets/borismarjanovic/price-volume-data-for-all-us-stocks-etfs}}, contains the daily percent changes for 214 securities from 01/01/1990 to 08/10/2017 (note that only days in which trades occurred are reported, resulting in 7,022 time periods). To obtain a discrete dataset, we use an approach similar to \cite{campajola2022modelling}: if the absolute value of the percent change of a given security $i$ in a given time period $t$ is larger than some predetermined quantity $\kappa$, we set $x_{t;i}=1$ (indicating large volatility for that security at time period $t$), and otherwise we set $x_{t;i}=0$. The quantity $\kappa$ is set so that 50\% of the elements of $x$ are $0$. In total there are $91,591$ parameters to be estimated per time period, resulting in $91,591\times T$ parameters (e.g., if $N=30$, then $91,591\times T\approx 2.1\times10^7$).

In our experiments, given a number $N\in \{20,30,40,50,60\}$ of observations per time period, we partition the data into $T=\lfloor7,022/N\rfloor$ time periods (for simplicity, we discard the observations corresponding to the last time period with less than $N$ data points). We use the Kernel averaging method discussed in \S\ref{sec:kernelAvg} (using a Gaussian kernel with parameter $h=0.02 T^{-1/3}$), we set the $\ell_\infty$ parameter $\lambda_t=\lambda_0\sqrt{n/(TNh)}$ for $\lambda_0\in \{0.05,0.16,0.5,1,2,3,4,5\}$ and use the algorithm described in \S\ref{sec_algorithm} to compute the complete solution path with $q=0$.

Note that in this case we do not have access to an explicit ``ground truth", although periods of high volatility in the market tend to coincide with economic recessions. Observe that during the time period considered, there are three official recessions: the early 1990s recession (from July 1990 to March 1991), the early 2000s recession (from March to November 2001) and the Great Recession (from December 2007 to June 2009). Thus we expect to see more changes in the stock correlation network during those time periods. In a previous paper \citep{fattahi2021scalable}, we showed that this is indeed the case for selected choices of the hyperparameters of the model (although we were using GMRFs to model the problem). In this paper, since we can now compute the solution path for all values of $\gamma$, we investigate the complete distribution of changes in the stock correlation network for all optimal solutions of \ref{generalopt1}. 

In particular, fixing $N=30$ and given any choice of the $\ell_\infty$ parameter $\lambda$, we count the total number of changes in the stock correlation market at each time period across all optimal solutions of \ref{generalopt1} (for different values of $\gamma$), and report the resulting histograms in Figure~\ref{fig:real_histogram}. At the two extreme values of the parameter $\lambda$, i.e., $\lambda\to 0$ and $\lambda\to\infty$, the histograms are relatively flat: indeed, letting $\lambda\to 0$ induces changes for all parameters at all time periods, and letting $\lambda\to\infty$ results in estimations where all parameters are constant and equal to $0$. However, we see that for the vast majority of values of $\lambda$ (i.e., $0.16\leq \lambda\leq 5$), the periods with the most changes correspond to the early 2000s recession and the Great Recession. We thus conclude that the inferences obtained from \ref{generalopt1} are robust to the choices of sparsity parameter, as most of the solutions obtained coincide in identifying the two recessions (and, to a minor degree, the early 1990s recession as well). 
\begin{figure}[!h]
	\centering
	\subfloat[$\lambda=0.05$]{\includegraphics[width=0.33\textwidth,trim={11cm 5.5cm 11cm 5.5cm},clip]{./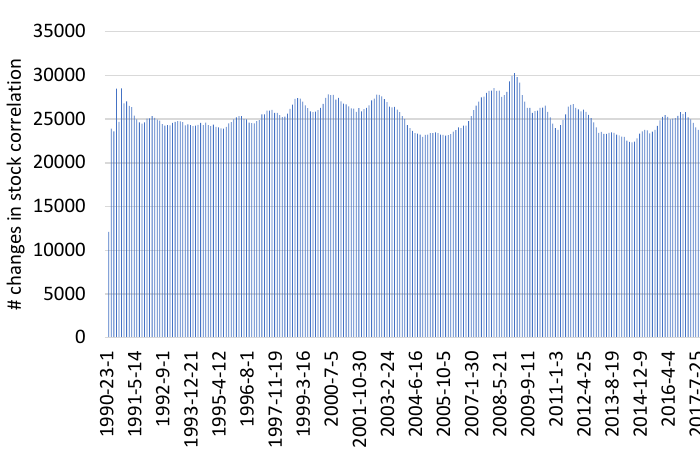}}\hfill	\subfloat[$\lambda=0.16$]{\includegraphics[width=0.33\textwidth,trim={11cm 5.5cm 11cm 5.5cm},clip]{./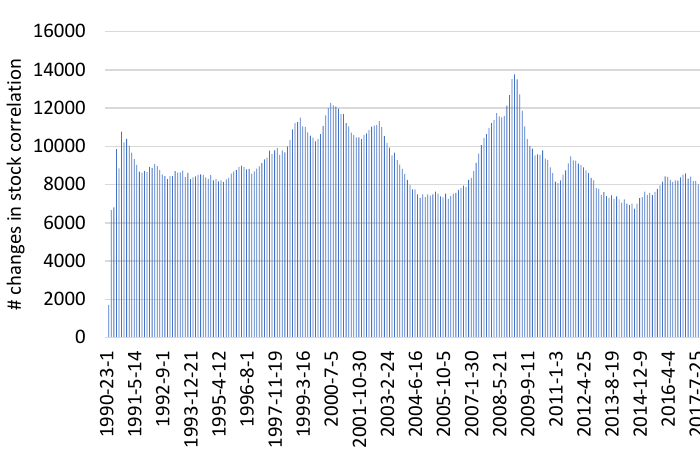}}\hfill	\subfloat[$\lambda=0.50$]{\includegraphics[width=0.33\textwidth,trim={11cm 5.5cm 11cm 5.5cm},clip]{./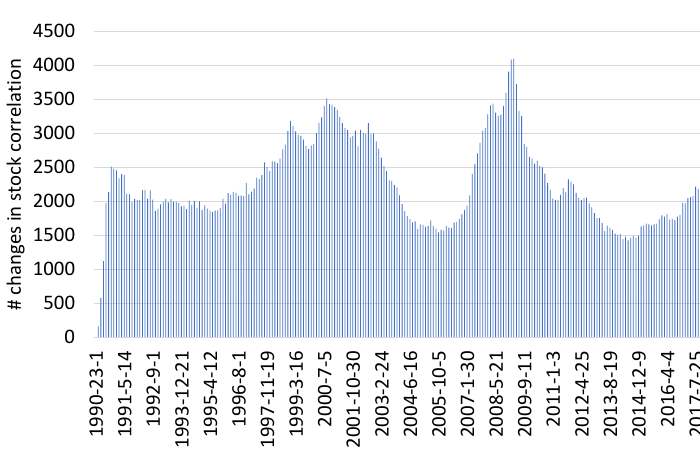}}\hfill	\subfloat[$\lambda=1.00$]{\includegraphics[width=0.33\textwidth,trim={11cm 5.5cm 11cm 5.5cm},clip]{./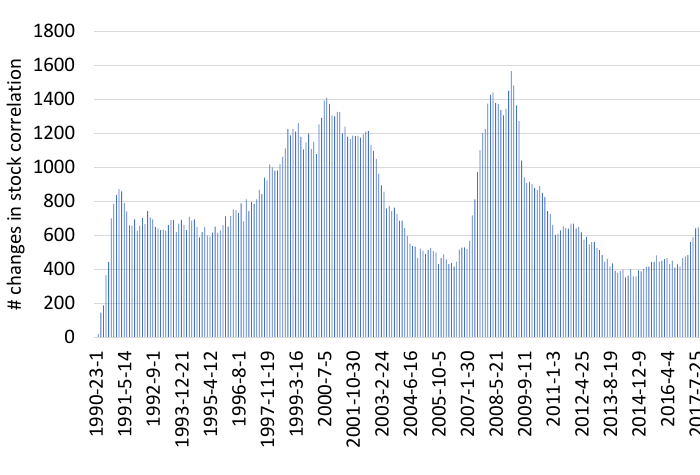}}\hfill	\subfloat[$\lambda=2.00$]{\includegraphics[width=0.33\textwidth,trim={11cm 5.5cm 11cm 5.5cm},clip]{./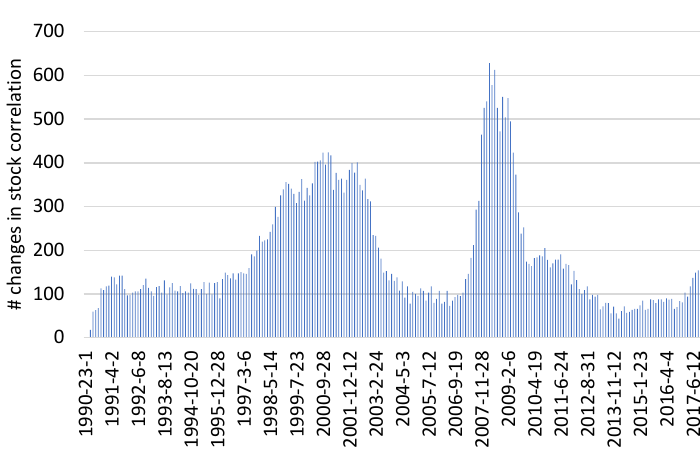}}\hfill	\subfloat[$\lambda=3.00$]{\includegraphics[width=0.33\textwidth,trim={11cm 5.5cm 11cm 5.5cm},clip]{./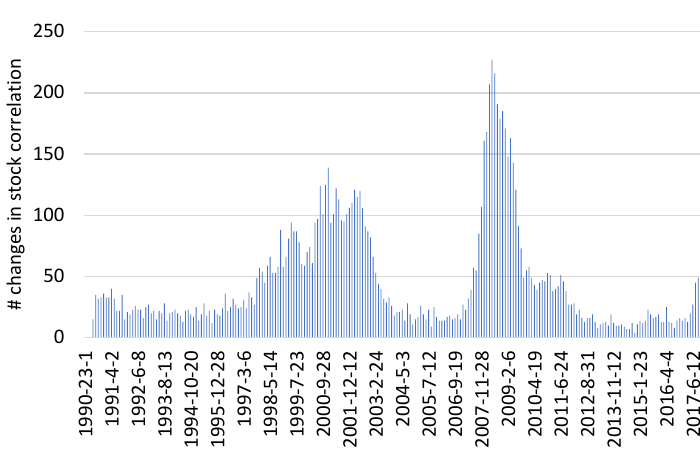}}
	\hfill	\subfloat[$\lambda=4.00$]{\includegraphics[width=0.33\textwidth,trim={11cm 5.5cm 11cm 5.5cm},clip]{./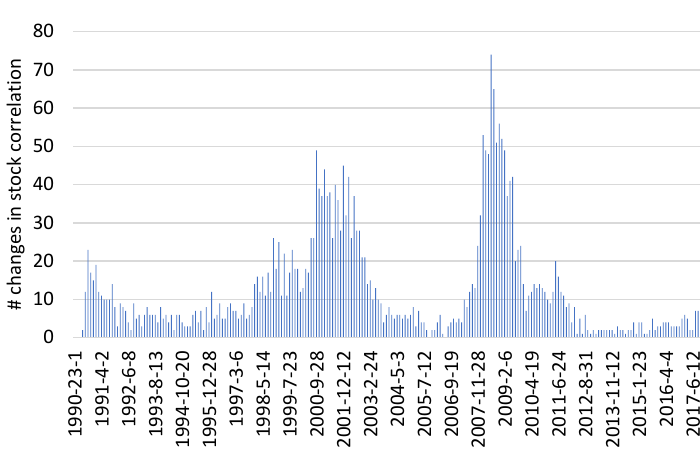}}\hspace{0.3cm}
	\subfloat[$\lambda=5.00$]{\includegraphics[width=0.33\textwidth,trim={11cm 5.5cm 11cm 5.5cm},clip]{./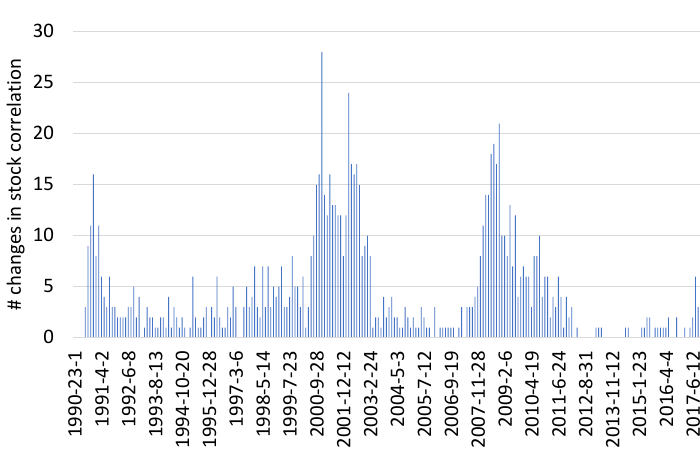}}\hfill
	\caption{\small Distribution of changes in the stock correlation network among all optimal solutions of \ref{generalopt1}.}
	\label{fig:real_histogram}
\end{figure}

Table~\ref{tab:timesReal} presents the computational times required to compute the solution path of \ref{generalopt1}. It shows the number of observations per time period $N$ and the total number of time periods $T$, and the total number of parameters to be estimated. Moreover, it shows the time required (in seconds) to compute the backwards mappings, solve the dynamic program, and the overall times. We point out some key differences with the computational times with GMRFs and synthetic instances reported in Table~\ref{tab:timesSynt}. First, computing the backward mappings for GMRFs is an expensive operation since requires matrix inversion, while the approximate backward mapping for DMRFs is substantially simpler to compute. As such, the time required to compute the backward mapping is negligible. Second, in the instances with real data, we vary the number of time periods $T$ (instead of the number of parameters per time period $p$). We see that the dynamic programming algorithm to solve \ref{generalopt1} is sensitive to $T$, and thus solving this problem consumes the majority of the computational time. Nonetheless, the computational times per number of parameters are similar to those reported in Table~\ref{tab:timesSynt}. Moreover,
in Figure~\ref{fig:loglog}, we plot in logarithmic scale the time required to solve the problems as a function of the number of time periods. We observe that the empirical complexity of $\mathcal{O}(T^{3.1})$ to compute the entire solution path matches the theoretical complexity reported in Theorem~\ref{thm_runtime}.
\begin{table}[!h]
	\begin{center}
		\caption{Time required to solve  \ref{generalopt1} (for all values of $\gamma$) on real instances with stock correlation networks. Time to print the solutions to a file is not included. }
		\label{tab:timesReal}
		\setlength{\tabcolsep}{2pt}
		\begin{tabular}{ c c c |c c c}
			\hline
			\multirow{2}{*}{$\mathbf{N}$} & \multirow{2}{*}{$\mathbf{T}$} &\multirow{2}{*}{\textbf{\# params}} & \multicolumn{3}{c}{\underline{\textbf{time (s)}}}\\
			&&&\textbf{backwards mapping}&\textbf{dynamic program} & \textbf{total}\\
			\hline
			60& 117& 10,716,147 & 8 &14&22\\
			50& 140&12,822,740&7&22&29\\
			40&176&16,120,016&8&57&65\\
			30&234&21,432,294&8&105&113\\
			20&351&32,148,441&9&694&701\\
			\hline
		\end{tabular}
	\end{center}
\end{table}  

\begin{figure}[!h]
	\centering
	\includegraphics[width=0.6\textwidth,trim={11cm 5.5cm 11cm 5.5cm},clip]{./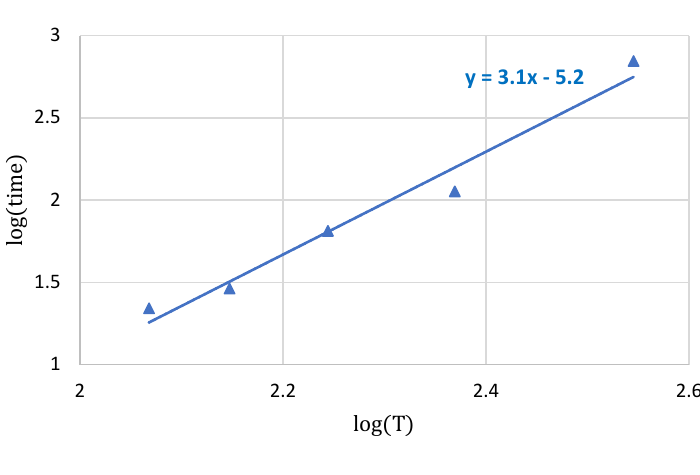}
	\caption{\small Time (in seconds) required in to solve stock correlation problems as a function of the number of time periods $T$ (in logarithmic scale). The slope of $3.1$ confirms the theoretical complexity of $\mathcal{O}(T^3)$ of the proposed method.}
	\label{fig:loglog}
\end{figure}

\section{Conclusion}
In this paper, we present a tractable formulation, referred to as \ref{generalopt1}, for inferring time-varying Markov random fields (MRFs) with various sparsity and temporal structures. Unlike existing methods that rely on maximum likelihood estimation (MLE) with relaxed $\ell_1$ regularization, we demonstrate that \ref{generalopt1} can efficiently achieve optimal solutions with exact $\ell_0$ regularization. By utilizing our proposed formulation, we can recover the complete solution path of the time-varying MRF for all sparsity parameters $\gamma$.

We identify two promising avenues for future research. Firstly, our parametric solution relies on a fixed $\lambda_t$ in \ref{generalopt1}. It would be valuable to explore whether \ref{generalopt1} can be parametrically solved for varying values of $\gamma$ and $\lambda$. Secondly, extending our proposed technique to spatiotemporal settings, where the MRF evolves over both time and space, presents an interesting future direction. Spatiotemporal MRFs are integral in inferring dynamically changing gene expression networks with spatial implications, carrying significant implications for understanding dynamic disease processes~\citep{werhli2006comparative, hartemink2000using, karlebach2008modelling}. The main challenge in this extension lies in the complex interactions among individual MRFs, which become considerably more intricate in spatially varying settings compared to their temporal counterparts.

\begin{sidewaystable}[!h]\small
	\begin{center}
	\caption{\texttt{F1-score} of precision matrices (F1\_p), differences (F1\_d), and estimation \texttt{error} for method ProxGL for different values of the shrinkage parameter $\nu$. }
 \label{tab:statsSynt}
	\begin{adjustbox}{width=\columnwidth,center}
		\setlength{\tabcolsep}{1pt}
		\begin{tabular}{ c |c c c | c c c | c c c | c c c | c c c| c c c | c c c}
			\hline
			\multirow{2}{*}{$\bm{\frac{N_t}{n}}$} &\multicolumn{3}{c|}{\underline{\textbf{ProxGL }$(\bm{\nu_0=0.0})$}}&\multicolumn{3}{c|}{\underline{\textbf{ProxGL }$(\bm{\nu_0=0.2})$}}&\multicolumn{3}{c|}{\underline{\textbf{ProxGL }$(\bm{\nu_0=0.5})$}}&\multicolumn{3}{c|}{\underline{\textbf{ProxGL }$(\bm{\nu_0=0.8})$}}&\multicolumn{3}{c|}{\underline{\textbf{ProxGL }$(\bm{\nu_0=2.0})$}}&\multicolumn{3}{c|}{\underline{\textbf{TVGL}}}&\multicolumn{3}{c}{\underline{\textbf{L1E}}}\\
   &\textbf{F1\_p}&\textbf{F1\_d}&\textbf{err}&\textbf{F1\_p}&\textbf{F1\_d}&\textbf{err}&\textbf{F1\_p}&\textbf{F1\_d}&\textbf{err}&\textbf{F1\_p}&\textbf{F1\_d}&\textbf{err}&\textbf{F1\_p}&\textbf{F1\_d}&\textbf{err}&\textbf{F1\_p}&\textbf{F1\_d}&\textbf{err}&\textbf{F1\_p}&\textbf{F1\_d}&\textbf{err}\\
   \hline
2&0.27&0.04&177.4\%&0.28&0.05&78.8\%&0.31&0.09&34.4\%&0.40&0.19&23.7\%&0.44&0.35&24.7\%&0.41&0.42&24.1\%&0.40&0.11&28.2\%\\
10&0.36&0.14&23.7\%&0.42&0.22&16.7\%&0.59&0.36&12.5\%&0.76&0.39&13.4\%&0.59&0.41&21.2\%&0.85&0.22&20.6\%&0.36&0.12&24.8\%\\
20&0.49&0.39&11.6\%&0.61&0.53&9.1\%&0.79&0.60&8.0\%&0.89&0.58&9.5\%&0.75&0.44&19.0\%&0.97&0.22&12.6\%&0.28&0.12&23.6\%\\
30&0.65&0.63&7.8\%&0.78&0.70&6.5\%&0.87&0.73&6.3\%&0.92&0.68&7.8\%&0.83&0.46&17.1\%&0.91&0.20&9.2\%&0.27&0.12&22.2\%\\
40&0.80&0.76&6.0\%&0.89&0.78&5.3\%&0.93&0.77&5.5\%&0.94&0.73&6.8\%&0.89&0.46&15.3\%&0.85&0.18&7.4\%&0.26&0.12&21.0\%\\
		\end{tabular}
  \end{adjustbox}
	\end{center}
\end{sidewaystable}  

\bibliography{reference}
\end{document}